\documentclass[11pt]{article}

\usepackage{amsmath, amssymb, amsthm, mathrsfs, mathtools}
\usepackage[ruled,vlined]{algorithm2e}
\usepackage[margin=1in]{geometry}
\usepackage{float}
\usepackage[round]{natbib}
\usepackage{graphicx}
\usepackage[title]{appendix}
\usepackage{tikz-cd}
\usepackage{xcolor}
\usepackage{xurl}
\usepackage{hyperref}

\newtheorem{theorem}{Theorem}
\newtheorem{lemma}{Lemma}
\newtheorem{proposition}{Proposition}
\newtheorem{corollary}{Corollary}
\newtheorem{assumption}{Assumption}

\makeatletter
\newcommand\blfootnotenomarker[1]{
  \begingroup
  \renewcommand\thefootnote{}
  \renewcommand\@makefnmark{}
  \NoHyper\footnotetext{#1}\endNoHyper
  \endgroup
}
\makeatother

\title{The Newton--Muon Optimizer}
\author{Zhehang Du \and Weijie Su}
\date{University of Pennsylvania\\[1.5em]\today}

\newcommand{\R}{\mathbb{R}}
\newcommand{\vecop}{\mathrm{vec}}

\begin{document}
\maketitle

\begin{abstract}

The Muon optimizer has received considerable attention for its strong performance in training large language models, yet the design principle behind its matrix-gradient orthogonalization remains largely elusive. In this paper, we introduce a surrogate model that not only sheds new light on the design of Muon, but more importantly leads to a new optimizer. In the same spirit as the derivation of Newton's method, the surrogate approximates the loss as a quadratic function of the perturbation to a weight matrix $W$ using only three matrices: the gradient $G$, an output-space curvature matrix $H$, and the data matrix $Z$ that stacks the layer inputs. By minimizing this surrogate in one step and adopting a certain isotropic assumption on the weights, we obtain the closed-form update rule (up to momentum and weight decay)
\[
W \leftarrow W - \eta \cdot \mathrm{msgn}(G(ZZ^\top)^{-1}),
\]
where $\eta$ is the learning rate and $\mathrm{msgn}(X)=UV^\top$ if $X=USV^\top$ is a compact singular value decomposition. This new optimization method, which we refer to as Newton--Muon, shows that standard Muon can be interpreted as an implicit Newton-type method that neglects the right preconditioning induced by the input second moment. Empirically, on a reproduction of the earliest publicly released Modded-NanoGPT speedrun configuration using Muon for GPT-2 pretraining, Newton--Muon reaches the target validation loss in 6\% fewer iteration steps and reduces wall-clock training time by about 4\%.

\end{abstract}

\blfootnotenomarker{Emails: \texttt{\{duz,suw\}@wharton.upenn.edu}.}
\blfootnotenomarker{Code is available at \url{https://github.com/zhehangdu/Newton-Muon}.}

\section{Introduction}

Since early 2025, there has been a surge of interest in matrix-structured optimization methods for training deep neural networks and large language models (LLMs). A prominent optimizer at the center of this flurry of research activity is Muon \citep{jordan2024muon}, closely related to spectral gradient descent \citep{carlson2015preconditioned}. Specifically, letting \(f({W})\) be the loss function and \({W}\) be a layer weight matrix, we consider the optimization problem
\[
\min_{W \in \mathbb{R}^{m \times n}} f(W).
\]
Let \({G} \coloneqq \nabla_{{W}} f({W})\in\mathbb{R}^{m\times n}\) be the gradient matrix or, in practice, an approximation computed from a mini-batch. Writing the matrix sign $\mathrm{msgn}({G}) \coloneqq {U}{V}^\top$ if ${G}={U}{S}{V}^\top$ is a compact singular value decomposition (SVD), Muon updates the weight matrix as\footnote{In practice, Muon approximates \(\mathrm{msgn}({G})\) using Newton--Schulz iterations and momentum is applied to the gradient.}
\[
W \leftarrow W - \eta \cdot \mathrm{msgn}({G})
\]
for some learning rate $\eta$. Compared to AdamW~\citep{kingma2014adam,loshchilov2017decoupled}, Muon has been reported to offer a faster convergence rate and lower wall-clock time to reach the same level of loss across a broad range of model sizes~\citep{liu2025muon,essentialai2025practical,wen2025fantastic}. Furthermore, it has been used to train state-of-the-art open-source models~\citep{kimiteam2026kimik25visualagentic,zeng2026glm}, and many extensions of Muon have been proposed~\citep{li2025normuon,pethick2025training,ahn2025dion2,he2025low,xu2026fismo,qi2026delving,gu2026mano}.
 
While matrix-based optimizers have demonstrated highly effective empirical performance, the theoretical mechanisms underlying Muon remain largely mysterious. For instance, it is natural to ask why preserving the matrix structure of the gradient is beneficial and why simply discarding its singular values is empirically effective. This is, however, not entirely surprising, as the nonconvex nature of deep learning optimization makes it notoriously difficult to analyze. Given that a rigorous theoretical understanding remains out of reach, a practical approach is to establish an intuitive yet principled framework for designing deep learning optimizers~\citep{bernstein2024old,pethick2025training,lau2025polargrad,gong2026aro}. Among these, \citet{su2025isotropic} introduced the isotropic curvature model as a surrogate for approximating the loss function by assuming isotropic curvature for preconditioning and isotropic input activations. A one-step descent analysis applied to this model suggests that the optimal update direction naturally preserves the matrix subspace of the gradient, thereby partially justifying gradient orthogonalization. 
 
However, the unspecified curvature function in the isotropic curvature model limits its utility for deriving practical optimization methods for LLM training. In this paper, we address this challenge by introducing a more tractable surrogate model for understanding these optimizers and, more importantly, for proposing a new method. Taking $Q\in\R^{m\times n}$ to be a potential update direction and letting $H$ denote a curvature matrix,\footnote{Here, the curvature matrix $H$ is not the full parameter-space Hessian with respect to the vectorized weights. In particular, the curvature matrix $H$ has size $m \times m$.} this paper introduces a surrogate model of the form
\begin{equation}\label{eq:triple_intro}
f(W - Q) - f(W) \approx -\mathrm{tr}({Q}{G}^\top)
  + \frac{1}{2N} \mathrm{tr} \Big({H} {Q} ({Z}{Z}^\top) {Q}^\top\Big),
\end{equation}
where $Z = [\boldsymbol{z}_1, \ldots, \boldsymbol{z}_N]$ denotes the collection of all activation inputs from the $N$ training data points to the layer $W$. Because it involves three components---namely $G$, $H$, and $Z$---this model is referred to as the \textit{triplet quadratic surrogate model}. By design, the gradient matrix $G$ and the update direction $Q$ naturally retain their matrix forms. The linear term $-\mathrm{tr}({Q}{G}^\top)$ captures the first-order approximation, as in the isotropic curvature model, while the quadratic term $\frac{1}{2N} \mathrm{tr} \Big({H} {Q} ({Z}{Z}^\top) {Q}^\top\Big)$ is considerably simpler than the curvature function in the isotropic curvature model. Notably, this quadratic term is the matrix-form expression induced by the Kronecker-factored curvature approximation in K-FAC~\citep{martens2015optimizing}.
 
This triplet quadratic surrogate for approximating $f(W - Q) - f(W)$ is essentially of the form ``$-\textit{linear gradient term} + \textit{quadratic curvature term}$'', which closely resembles the derivation of Newton's method. As such, this surrogate model serves as a means of deriving a Newton-type method in a manner that fully leverages the matrix structure of the gradient via a one-step descent analysis; that is, by minimizing the right-hand side of \eqref{eq:triple_intro} over $Q$. In fact, although second-order methods such as Newton's method are rarely used in training large-scale neural networks, recent work demonstrates a $3$--$5\times$ iteration speedup when using the Gauss--Newton method compared to common optimizers in deep learning \citep{abreu2025potential}. Thus, there is strong motivation to develop an implicit Newton-type method for LLM training, provided that the per-iteration computational cost remains roughly comparable to that of AdamW or Muon.
 
Despite the ease of minimizing the triplet quadratic surrogate model, a key difficulty arises because the optimal solution involves the unknown curvature matrix $H$. To circumvent this, we make an assumption on the displacement $W - W^\star$, which is the difference between the current weight matrix $W$ and an optimal weight matrix $W^\star$.\footnote{Since there are generally exponentially many optimal weight matrices, we simply consider one that is locally closest to $W$.} We assume that this displacement is isotropic, in the sense that no direction is favored over another on average. Surprisingly, under this \textit{least-informative} assumption, we obtain a closed-form expression for the Newton-type update: $Q^\star \propto \mathrm{msgn}(G(ZZ^\top)^{-1})$. Accordingly, our new method updates the weight matrix according to the rule\footnote{In practice, momentum and other common implementation tricks are also used.}
\[
W \leftarrow W - \eta \cdot \mathrm{msgn}(G(ZZ^\top)^{-1}).
\]
This method differs from standard Muon in that the gradient matrix $G$ is right-preconditioned by the inverse second moment of the input data. This can be interpreted as incorporating the geometry of the data distribution to yield an update direction distinct from that of Muon. To implement Newton--Muon efficiently in practice, we maintain a running estimate of the second moment matrix $ZZ^\top$ and recompute both this estimate and its inverse only periodically, rather than at every optimization step. Between refreshes, the cached inverse is reused. We compute the damped inverse $(ZZ^\top+\gamma I_n)^{-1}$ via a Cholesky factorization followed by triangular solves. Even for very large matrices, this computational cost is no more than about $10\times$ that of a single matrix multiplication on a modern GPU. This step is then followed by a Newton--Schulz polynomial approximation to the matrix sign, such as the Polar Express method~\citep{amsel2025polar} or Gram Newton--Schulz~\citep{GramNewtonSchulz}.
 
Because this optimization method is derived in the same spirit as Newton's method, we call it \textit{Newton--Muon}. Standard Muon can be recovered as a special case when the input data's second moment is approximately isotropic, that is, $ZZ^\top \propto I_n$. Put differently, Muon can be viewed as an implicit Newton-type method that does not account for the geometry of the input data. However, this simplification is not consistent with our observation that $ZZ^\top$ is highly anisotropic in practice.
 
The right-preconditioning by the input data distribution in the update rule endows Newton--Muon with its empirical advantage over standard Muon. Indeed, compared to our reproduction of the earliest publicly logged Modded-NanoGPT speedrun~\citep{modded_nanogpt_2024} configuration using Muon, Newton--Muon achieves a notable $6\%$ reduction in the number of iterations required to reach the target validation loss, as shown in Figure~\ref{fig:record4-baseline-plot}. In terms of wall-clock time, replacing standard Muon with Newton--Muon yields a reduction of about $4\%$.

\begin{figure}[!htb]
  \centering
  \includegraphics[width=\linewidth]{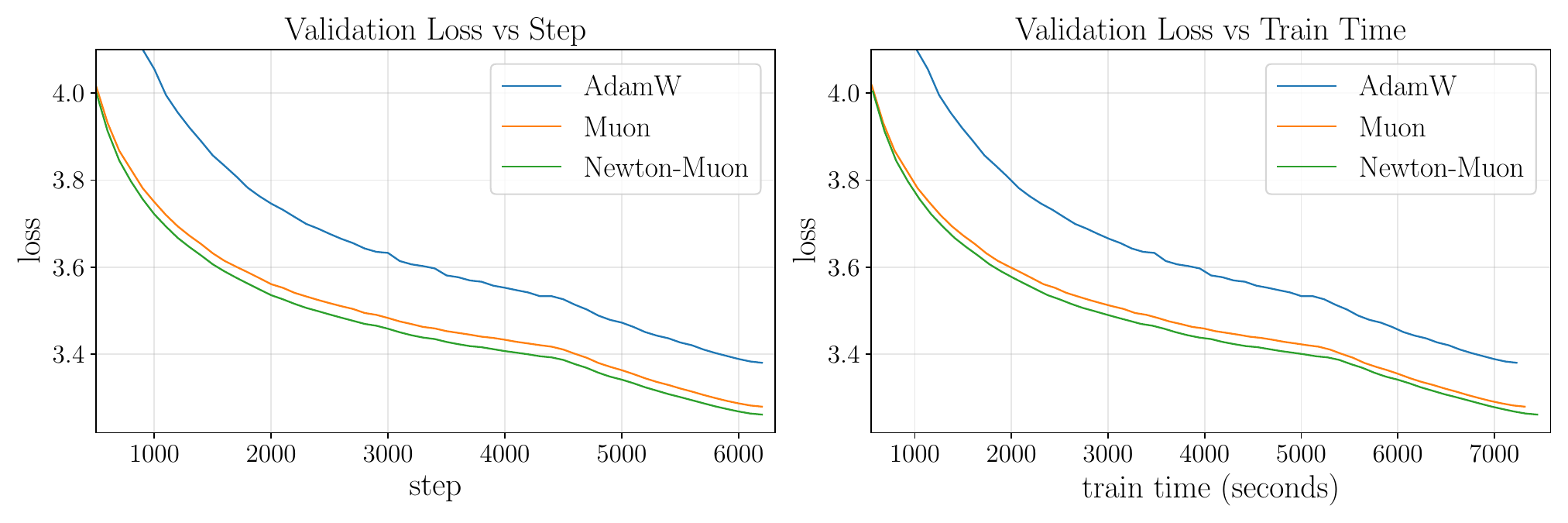}

  \includegraphics[width=\linewidth]{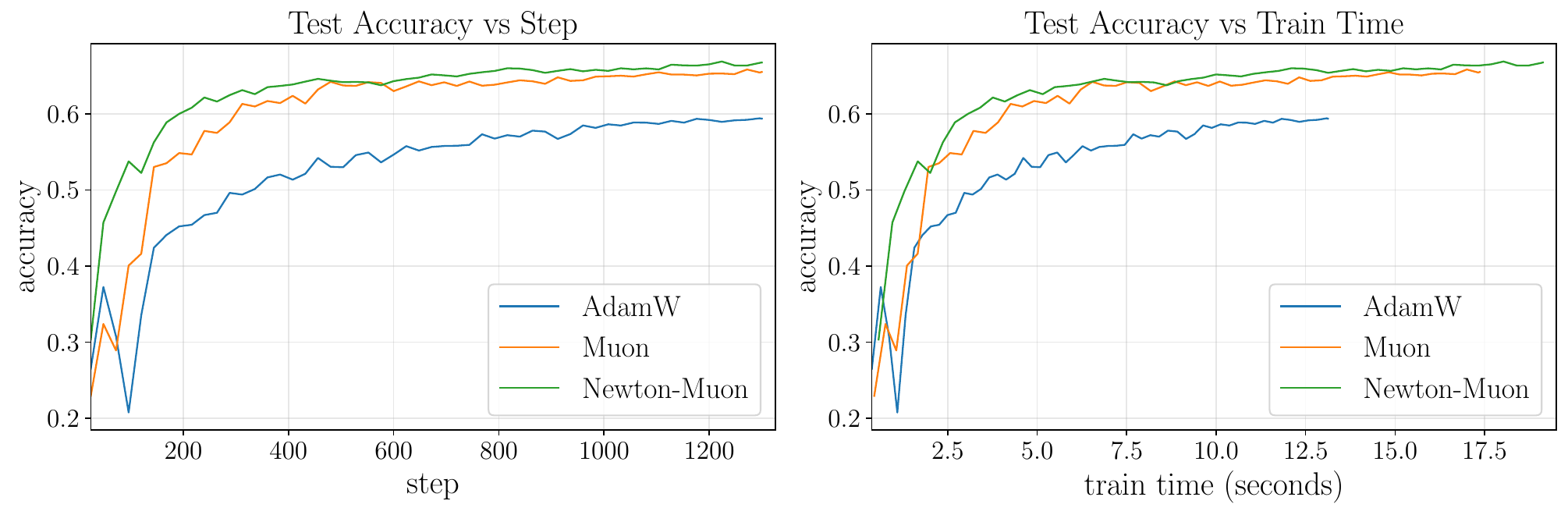}
  \caption{Top: short track Record~\#4 validation loss comparison on the Modded-NanoGPT speedrun benchmark. Record~\#4 is the earliest publicly released configuration using Muon, and our reproduction on a single H100 GPU is denoted Muon. Newton--Muon adds the activation right-preconditioner before the Newton--Schulz iterations. Newton--Muon reaches the Muon baseline final validation loss in $6\%$ fewer steps; despite a $1.8\%$ higher per-step cost from right-preconditioning, it reduces wall-clock time to that loss by about $4\%$. Bottom: CIFAR-10 experiments (Appendix~\ref{app:cifar10-details}) on a 32-layer residual MLP show that Newton--Muon outperforms both Muon and AdamW in both per-step efficiency and overall wall-clock time.}
  \label{fig:record4-baseline-plot}
\end{figure}

\subsection{Structure of the Paper}

Section~\ref{sec:exact-and-transformer} introduces the triplet quadratic surrogate model and derives the Newton--Muon update. Section~\ref{sec:single-spike-convergence} analyzes Newton--Muon on a simple quadratic case study. Section~\ref{sec:numerical-sim} develops a one-step analysis with numerical experiments under spiked activation. Section~\ref{sec:real-llm-exp} presents our experiments on LLMs. Section~\ref{sec:conclusion} discusses limitations and open directions. Appendix~\ref{app:llm-details} provides the LLM training configurations. Appendix~\ref{app:opt-compute} details efficient computation of the activation second moment inverse. Appendix~\ref{app:cifar10-details} provides the CIFAR-10 training configurations. Appendix~\ref{app:kronecker-factor} derives the Kronecker-factored curvature. Appendix~\ref{app:quadratic-score} gives quadratic score formulas under isotropic activation. Appendix~\ref{app:llm-exp-sigmaW} discusses the non-isotropic assumption.

\subsection{Related Work}
\paragraph{Matrix-based optimizers.}
Existing matrix-based optimizers can be grouped into several broad directions. One line uses matrix structure for preconditioning, curvature approximation, or adaptive updates, including spectral descent~\citep{carlson2015preconditioned}, K-FAC~\citep{martens2015optimizing,george2018fast}, Shampoo~\citep{gupta2018shampoo,morwani2024new}, and SOAP~\citep{vyas2024soap}, which performs Adafactor-style~\citep{shazeer2018adafactor} updates. Another line improves training efficiency through approximations, for example by combining preconditioning with variance reduction, as in MARS~\citep{yuan2024mars}, or by using low-rank gradient projections, as in GaLore~\citep{zhao2024galore,su2025galore}, or by accelerating matrix-function evaluation, as in PRISM~\citep{yang2026prism}. More recent work studies matrix geometry itself as the design principle, including LMO-based methods such as Scion~\citep{pethick2025training}, cheaper Muon variants such as Dion2~\citep{ahn2025dion2}, adaptive choices of learning rate such as PolarGrad~\citep{lau2025polargrad}, and rotated coordinate updates such as ARO~\citep{gong2026aro}.

\paragraph{Understanding Muon.}
A common view is that Muon implements normalized steepest descent under the spectral norm~\citep{bernstein2024old,crawshaw2025exploration,riabinin2025gluon}, which is further generalized by Lion-$\mathcal{K}$~\citep{chen2025muon}. We instead connect Muon to the classical Euclidean Newton step under a local quadratic surrogate, providing a complementary explanation for its strong performance. Related theory studies Muon's implicit bias and geometric interpretation~\citep{fan2025implicit,pethick2025training}, including simplicity bias~\citep{dragutinovic2026use}, separates curvature and gradient anisotropy~\citep{lau2025polargrad}, and analyzes convergence in stochastic nonconvex settings and structured models~\citep{li2025muon,sato2025analysis,ma2026preconditioning}, including under heavy-tailed gradient noise~\citep{yu2026sign}. Beyond asymptotic results, local one-step analysis has also been studied~\citep{su2025isotropic,davis2025spectral,gonon2026insights}.

\paragraph{Kronecker-factored curvature.}
Layerwise curvature factorizations and matrix preconditioners have been widely studied, notably in K-FAC~\citep{martens2015optimizing} and Shampoo~\citep{gupta2018shampoo}. We start from the same structural observation: for a given layer, the parameter-space curvature of a local quadratic model can be approximated by the Kronecker product $({Z}{Z}^\top/N)\otimes {H}$, where ${Z}{Z}^\top/N$ is the activation second moment and ${H}$ is an output-space curvature factor. K-FAC explicitly estimates ${H}$ as a generalized Gauss--Newton/Fisher factor from gradient second moments and then applies the two-sided preconditioner ${H}^{-1}{G}({Z}{Z}^\top)^{-1}$. In contrast, we show that Muon's matrix sign can be interpreted as an implicit left preconditioner that approximates the effect of ${H}^{-1}$.

\section{Derivation of Newton--Muon}
\label{sec:exact-and-transformer}
In this section, we first propose an amenable surrogate function for approximating the objective function we wish to minimize, followed by the derivation of the Newton--Muon method via a \textit{one-step} minimization of this surrogate. We then draw a connection between Newton's method and standard Muon to provide a principled interpretation of the latter. 

\subsection{Triplet Quadratic Surrogate}

Starting from the current iterate, we estimate the loss change induced by a candidate update direction using a local quadratic model, in the same spirit as the derivation of Newton's method. 

Assume the loss function $f:\R^{m\times n}\to\R$ is twice continuously differentiable, and let ${G} \coloneqq \nabla_{{W}} f({W})\in\R^{m\times n}$ be the gradient matrix at the current iterate ${W}\in\R^{m\times n}$. For each sample $i$, let $\boldsymbol{z}_i\in\R^n$ denote the layer input, which is the activated output from the previous layer, and write ${Z}=[\boldsymbol{z}_1,\dots,\boldsymbol{z}_N]\in\R^{n\times N}$. For a candidate update direction ${Q}\in\R^{m\times n}$, the layer output for sample $i$ changes by $-{Q}\boldsymbol{z}_i\in\R^m$. To motivate the quadratic term, we write the averaged loss as $f({W})=(1/N)\sum_{i=1}^N L_i({W}\boldsymbol{z}_i)$. Under the perturbation ${W}\mapsto {W}-{Q}$, the $i$-th output changes from ${W}\boldsymbol{z}_i$ to ${W}\boldsymbol{z}_i-{Q}\boldsymbol{z}_i$. By the integral remainder form of Taylor's theorem,
\[
L_i({W}\boldsymbol{z}_i-{Q}\boldsymbol{z}_i)
=
L_i({W}\boldsymbol{z}_i)
-
\nabla L_i({W}\boldsymbol{z}_i)^\top ({Q}\boldsymbol{z}_i)
+
h({Q}\boldsymbol{z}_i,{W}\boldsymbol{z}_i),
\]
where
\[
h({Q}\boldsymbol{z}_i,{W}\boldsymbol{z}_i)
 \coloneqq 
({Q}\boldsymbol{z}_i)^\top
\left[
\int_0^1 (1-t) \nabla^2 L_i({W}\boldsymbol{z}_i-t {Q}\boldsymbol{z}_i) \mathrm{d}t
\right]
({Q}\boldsymbol{z}_i).
\]
We then approximate this path-dependent integral \( \int_0^1 (1-t)\nabla^2 L_i({W}\boldsymbol{z}_i-t {Q}\boldsymbol{z}_i) \mathrm{d}t \approx (1/2) H \) by a fixed average curvature matrix $H$ shared across samples, so that the contribution of sample $i$ to the quadratic term is approximated by $1/(2N)({Q}\boldsymbol{z}_i)^\top {H} ({Q}\boldsymbol{z}_i)$. Summing over the $N$ samples gives the surrogate objective
\begin{equation}\label{eq:triplet-obj-derived}
  \min_{{Q}\in\R^{m\times n}} J({Q}) =  - \mathrm{tr}({Q}{G}^\top)
+ \frac1{2N}\sum_{i=1}^N ({Q}\boldsymbol{z}_i)^\top {H} ({Q}\boldsymbol{z}_i) \coloneqq - \mathrm{tr}({Q}{G}^\top) + \frac1{2N}\mathrm{tr}\Big(HQZZ^\top Q^\top\Big).
\end{equation}
This objective is closely related to the isotropic curvature model~\citep{su2025isotropic}:
\[
\min_{{Q}\in\R^{m\times n}} -\mathrm{tr}(QG^\top) + \mathbb{E}_{\zeta} h(\|Q\zeta\|),
\]
where $\zeta$ is sampled uniformly from the unit sphere and $h$ is a univariate curvature function. The linear term is the same as in \eqref{eq:triplet-obj-derived}, but the higher-order term is modeled isotropically through a radial curvature function $h$, assumed to have super-quadratic growth. In contrast, our triplet surrogate replaces the isotropic sampling of $\zeta$ with the empirical distribution from the columns of $Z$, removes the isotropy assumption on the curvature, and explicitly captures the interaction among $H$, $Z$, and $G$ in \eqref{eq:triplet-obj-derived}.

\subsection{Minimization of the Triplet Model}
\label{subsec:triplet-induced}

Minimizing \eqref{eq:triplet-obj-derived} over $Q$ is straightforward, but it becomes practically useful only if we can establish a relationship between the curvature matrix $H$ and the gradient matrix $G$. To this end, we first establish a connection between \eqref{eq:triplet-obj-derived} and the full parameter-space second-order expansion. Define the parameter-space Hessian
${\mathcal{H}}_{{W}} \coloneqq \nabla^2_{\vecop({W})} f({W}) \in \R^{(mn)\times(mn)}$,
where $\vecop({W})\in\R^{mn}$ stacks ${W}$ into a vector using a standard convention. The second-order Taylor expansion around $\vecop({W})$ gives
\begin{equation}\label{eq:param-quad}
  f({W} - {Q})
   \approx 
  f({W})
  - \mathrm{tr}(QG^\top)
  + \frac{1}{2} \vecop({Q})^\top {\mathcal{H}}_{{W}} \vecop({Q}).
\end{equation}
For the quadratic proxy of \eqref{eq:param-quad}, we employ the approximation $\vecop({G}) \approx {\mathcal{H}}_{{W}} \vecop({W}-W^\star)$, where $W^\star$ denotes a nearby local minimizer, or more generally, a nearby reference point at which $\nabla_W f(W^\star)$ is assumed to be negligibly small.

To relate $\mathcal{H}_W$ to the curvature matrix $H$ in the surrogate model \eqref{eq:triplet-obj-derived}, we use the Kronecker-factored approximation (see its application in K-FAC~\citep{martens2015optimizing} and Appendix~\ref{app:kronecker-factor} for details):
\begin{equation}\label{eq:HW-kron}
  {\mathcal{H}}_{{W}}
  \approx
  ({Z}{Z}^\top/N)\otimes {H},
\end{equation}
where $\otimes$ denotes the Kronecker product. We then obtain 
\[
  \vecop({G})
  \approx
  \big(({Z}{Z}^\top/N)\otimes {H}\big)\vecop({W}-W^\star)
  =
  \vecop\Big({H} ({W}-W^\star) ({Z}{Z}^\top/N)\Big).
\]
This immediately implies
\[
G \approx H ({W}-W^\star) {Z}{Z}^\top/N.
\]

In light of the above, we make the following assumption.
\begin{assumption}
\label{assump:exact-newton}
We assume ${G}={H} ({W}-W^\star) {Z}{Z}^\top/N, \quad {H}\succ 0,\quad {Z}{Z}^\top\succ 0$.
\end{assumption}
The last two conditions ensure that the quadratic surrogate in~\eqref{eq:triplet-obj-derived} is strictly convex in ${Q}$ and hence has a unique minimizer. If the curvature matrix $H$ is not positive definite but merely non-degenerate, we can instead solve for a stationary point of the surrogate model.

Under this assumption, the optimal solution to the surrogate objective \eqref{eq:triplet-obj-derived} takes a closed-form expression that does not explicitly involve the unknown curvature matrix $H$.
\begin{proposition}\label{prop:exact-newton-polar}
Denote by ${\Sigma}_{{W}} \coloneqq ({W}-W^\star)({W}-W^\star)^\top \in \mathbb{R}^{m\times m}$ the displacement second moment matrix and write ${\Sigma}_{{W}}^{1/2}$ for its unique positive semidefinite square root. Then under Assumption~\ref{assump:exact-newton}, the unique minimizer ${Q}^\star$ of \eqref{eq:triplet-obj-derived} takes the form
\begin{equation}\label{eq:SigmaW-polar-update3}
  {Q}^\star = 
  {\Sigma}_{{W}}^{1/2}
  \mathrm{msgn}\Big(
    {\Sigma}_{{W}}^{1/2} {G} ({Z}{Z}^\top)^{-1}
  \Big).
\end{equation}
\end{proposition}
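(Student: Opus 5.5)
The plan is to compute the minimizer directly, use Assumption~\ref{assump:exact-newton} to show that it equals the displacement $D \coloneqq W-W^\star$, and then verify that the right-hand side of \eqref{eq:SigmaW-polar-update3} also equals $D$.

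\textbf{Step 1: the minimizer is $D$.} Since $H\succ 0$ and $ZZ^\top\succ 0$, the quadratic form $\mathrm{tr}(HQZZ^\top Q^\top)=\vecop(Q)^\top(ZZ^\top\otimes H)\vecop(Q)$ is positive definite. Hence $J$ is strictly convex and its unique minimizer is its stationary point. Differentiating \eqref{eq:triplet-obj-derived} gives $\nabla J(Q)=-G+\frac1N HQZZ^\top$, so
\[
Q^\star=N H^{-1}G(ZZ^\top)^{-1}.
\]
Substituting $G=HDZZ^\top/N$ yields $Q^\star=D$. It also gives the identity $G(ZZ^\top)^{-1}=HD/N$, which drives the rest of the argument.

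\textbf{Step 2: the right-hand side equals $D$.} Take a compact SVD $D=USV^\top$, where $U\in\R^{m\times r}$ and $V\in\R^{n\times r}$ have orthonormal columns and $S\succ 0$ is $r\times r$ diagonal. Then $\Sigma_W=US^2U^\top$, and by uniqueness of the PSD square root, $\Sigma_W^{1/2}=USU^\top$. Therefore
\[
\Sigma_W^{1/2}G(ZZ^\top)^{-1}=\tfrac1N\, USU^\top H U S V^\top = U M V^\top,
\qquad M\coloneqq \tfrac1N\, S\,(U^\top H U)\,S .
\]
Since $U^\top HU\succ 0$ (as $H\succ 0$ and $U$ has full column rank) and $S$ is invertible, $M$ is $r\times r$ symmetric positive definite. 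Write its eigendecomposition $M=P\Lambda P^\top$ with $\Lambda\succ 0$. Then $UMV^\top=(UP)\Lambda(VP)^\top$ is a compact SVD, because $UP$ and $VP$ have orthonormal columns. Hence
\[
\mathrm{msgn}\big(UMV^\top\big)=UPP^\top V^\top=UV^\top,
\]
and consequently $\Sigma_W^{1/2}UV^\top=USU^\top UV^\top=USV^\top=D=Q^\star$.

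\textbf{Main obstacle.} The only delicate point is the msgn computation. One must check that the factorization $(UP)\Lambda(VP)^\top$ is a legitimate compact SVD, so that msgn is well defined independently of the choice of SVD. This is exactly where positive definiteness of $H$, and hence of $M$, is essential. If $H$ were merely nondegenerate, negative eigenvalues would flip signs and break the identity. The degenerate case $D=0$ is trivial: then $G=0$, and with the convention $\mathrm{msgn}(0)=0$ both sides vanish.
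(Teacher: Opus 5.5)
Your proof is correct and follows essentially the same route as the paper: compact SVD of $W-W^\star$, then the identity $G(ZZ^\top)^{-1}=H(W-W^\star)/N$ to write $\Sigma_W^{1/2}G(ZZ^\top)^{-1}=U\,M\,V^\top$ with $M$ symmetric positive definite, so the right-hand side collapses to $W-W^\star=Q^\star$. The only differences are in presentation. You obtain $Q^\star=NH^{-1}G(ZZ^\top)^{-1}$ directly from the first-order condition, whereas the paper appeals to the Newton direction $\mathcal{H}_W^{-1}\vecop(G)$. You also justify $\mathrm{msgn}(UMV^\top)=UV^\top$ explicitly via the eigendecomposition of $M$, and handle $D=0$ separately; these make your version slightly more self-contained.
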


\begin{proof}[Proof of Proposition~\ref{prop:exact-newton-polar}]
Define the compact SVD ${W}-W^\star={U}_{Q}{S}_{Q}{V}_{Q}^\top$, where ${U}_{Q}\in\mathbb{R}^{m\times r}$ and ${V}_{Q}\in\mathbb{R}^{n\times r}$ have orthonormal columns, and ${S}_{Q}\in\mathbb{R}^{r\times r}$ is a diagonal matrix with positive entries. Then ${\Sigma}_{{W}}={U}_{Q}{S}_{Q}^2{U}_{Q}^\top$ and ${\Sigma}_{{W}}^{1/2}={U}_{Q}{S}_{Q}{U}_{Q}^\top$. From Assumption~\ref{assump:exact-newton}, we have ${G}({Z}{Z}^\top)^{-1}={H}(W-W^\star)/N$, so
\[
  {\Sigma}_{{W}}^{1/2}{G}({Z}{Z}^\top)^{-1}
  =
  \frac{1}{N}{\Sigma}_{{W}}^{1/2}{H}(W-W^\star)
  =
  \frac{1}{N}{U}_{Q}{S}_{Q}\big({U}_{Q}^\top {H} {U}_{Q}\big){S}_{Q}{V}_{Q}^\top.
\]
Since ${H}\succ 0$ and ${U}_{Q}$ has full column rank, the $r \times r$ matrix ${U}_{Q}^\top {H} {U}_{Q}$ is symmetric positive definite. Hence, ${S}_{Q}\big({U}_{Q}^\top {H} {U}_{Q}\big){S}_{Q}$ is also symmetric positive definite. Therefore, 
\[
\mathrm{msgn}\Big({\Sigma}_{{W}}^{1/2}{G}({Z}{Z}^\top)^{-1}\Big)
=
{U}_{Q}{V}_{Q}^\top.
\]
Multiplying on the left by ${\Sigma}_{{W}}^{1/2}$ gives
\[
{\Sigma}_{{W}}^{1/2}
\mathrm{msgn}\Big({\Sigma}_{{W}}^{1/2}{G}({Z}{Z}^\top)^{-1}\Big)
=
{U}_{Q}{S}_{Q}{U}_{Q}^\top {U}_{Q}{V}_{Q}^\top
=
{U}_{Q}{S}_{Q}{V}_{Q}^\top
=
{W}-W^\star.
\]
Under Assumption~\ref{assump:exact-newton}, the quadratic term in~\eqref{eq:triplet-obj-derived} is equivalent to the vectorized quadratic term in~\eqref{eq:param-quad}. Thus, the solution ${Q}^\star$ to \eqref{eq:triplet-obj-derived}, that is, the Newton direction, satisfies $\vecop({Q}^\star)={\mathcal{H}}_{{W}}^{-1}\vecop({G})={\mathcal{H}}_{{W}}^{-1}{\mathcal{H}}_{{W}}\vecop({W}-W^\star)=\vecop({W}-W^\star)$. Hence ${Q}^\star={W}-W^\star$, from which \eqref{eq:SigmaW-polar-update3} follows.
\end{proof}

\subsection{Newton--Muon}

The Newton-type update \eqref{eq:SigmaW-polar-update3} can be readily used as long as the unknown displacement second moment $\Sigma_W = (W-W^\star)(W-W^\star)^\top$ can be estimated. While the proof of Proposition~\ref{prop:exact-newton-polar} shows that $Q^\star=W-W^\star$, meaning that the right-hand side of \eqref{eq:SigmaW-polar-update3} inherently depends on $Q^\star$, a closer look reveals that $\Sigma_W$ is a much coarser object to approximate than $Q^\star$ itself. Specifically, approximating $\Sigma_W$ does not require knowledge of the right singular spaces of $W-W^\star$.

Perhaps the least-informative approximation is to assume that $\Sigma_W$ is a multiple of the identity matrix. This isotropic proxy, ${\Sigma}_{{W}}\propto {I}_m$, is plausible, especially at initialization and during the early stages of training when the columns of $W - W^\star$ are approximately independent and no specific directional structure has yet emerged. Since ${\Sigma}_{{W}} = ({W}-W^\star)({W}-W^\star)^\top$ aggregates the second moments of the columns of ${W}-W^\star$, applying this isotropic proxy amounts to treating these unknown column directions as having no preferred orientation in aggregate. 

This yields a closed-form update direction depending only on \textit{observable} quantities, as shown in the following result.  

\begin{theorem}\label{thm:isotropic-sigma-newton-muon}
Under Assumption~\ref{assump:exact-newton} and the isotropic proxy ${\Sigma}_{{W}}\propto {I}_m$, we have
\[
{Q}^\star  \propto  \mathrm{msgn} \Big({G}({Z}{Z}^\top)^{-1}\Big).
\]
\end{theorem}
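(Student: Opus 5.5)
The plan is to specialize Proposition~\ref{prop:exact-newton-polar} directly. Write the isotropic proxy as ${\Sigma}_{{W}} = c\,{I}_m$ with $c>0$; the case $c=0$ would force ${W}=W^\star$, hence ${G}=0$ and ${Q}^\star=0$, which is trivially proportional to anything. For $c>0$ the unique positive semidefinite square root is ${\Sigma}_{{W}}^{1/2}=\sqrt{c}\,{I}_m$. Substituting this into \eqref{eq:SigmaW-polar-update3} gives
\[
{Q}^\star=\sqrt{c}\,\mathrm{msgn}\Big(\sqrt{c}\,{G}({Z}{Z}^\top)^{-1}\Big).
\]

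The remaining step is the elementary invariance $\mathrm{msgn}(aX)=\mathrm{msgn}(X)$ for any scalar $a>0$. If $X={U}{S}{V}^\top$ is a compact SVD, then $aX={U}(a{S}){V}^\top$ is again a compact SVD, because $a{S}$ is still diagonal with positive entries. Hence both matrices have the same polar factor ${U}{V}^\top$. Applying this with $a=\sqrt{c}$ yields ${Q}^\star=\sqrt{c}\,\mathrm{msgn}({G}({Z}{Z}^\top)^{-1})$, which is the claim, with proportionality constant $\sqrt{c}$.

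The only point needing care is consistency: Assumption~\ref{assump:exact-newton} together with ${\Sigma}_{{W}}=c{I}_m$ forces ${W}-W^\star$ to have full row rank $m$, so $m\le n$. I would note this, and also that the proposition's proof then gives ${Q}^\star={W}-W^\star$ with all singular values equal to $\sqrt{c}$, which matches the formula. I would also remark that "$\propto$" is to be read as holding up to this positive scalar, which is absorbed into the learning rate $\eta$. No substantive obstacle is expected.
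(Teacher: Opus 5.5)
Your proof is correct and follows the same route as the paper, which presents this theorem as an immediate corollary of Proposition~\ref{prop:exact-newton-polar}: substitute $\Sigma_W^{1/2}=\sqrt{c}\,I_m$ into \eqref{eq:SigmaW-polar-update3} and use the positive-scale invariance of $\mathrm{msgn}$. Your handling of the degenerate case $c=0$ and your observation that $c>0$ forces $m\le n$ are also consistent with the paper's own caveat that for $m>n$ the relation $\Sigma_W\propto I_m$ is only a proxy.
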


Although this is a corollary of Proposition~\ref{prop:exact-newton-polar}, we choose to present it as a theorem because it formally establishes the core method introduced in this paper. At iteration $t$, we update the weight matrix according to the rule:
\[
W_{t+1} = W_t - \eta_t \cdot \mathrm{msgn} \Big(G_t(Z_t Z_t^\top)^{-1}\Big),
\]
where $\eta_t$ denotes the learning rate. We call this method Newton--Muon because it implicitly acts as a Newton-type method, derived by minimizing a quadratic surrogate. Strictly speaking, when $m>n$, it mathematically cannot hold that ${\Sigma}_{{W}}\propto {I}_m$, since ${\Sigma}_{{W}}=({W}-W^\star)({W}-W^\star)^\top$ has rank at most $n$. Thus, the relation ${\Sigma}_{{W}}\propto {I}_m$ should be understood purely as an isotropic proxy for the unknown displacement second moment.

The following result shows that the Newton--Muon update is always a descent direction.

\begin{proposition}\label{prop:Newton--Muon-descent}
For the Newton--Muon direction $Q =\mathrm{msgn}\big(G(ZZ^\top)^{-1}\big)$, we have $\mathrm{tr}(G^\top Q) \ge 0$.
\end{proposition}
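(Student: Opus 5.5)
Set $P \coloneqq (ZZ^\top)^{-1}$, which is symmetric positive definite because $ZZ^\top\succ 0$ under Assumption~\ref{assump:exact-newton}. Let $X \coloneqq GP$, and take its compact SVD $X = USV^\top$ with $S\succ 0$ diagonal, so that $Q = UV^\top$. The idea is to rewrite $G$ in terms of $X$: since $G = XP^{-1} = X(ZZ^\top)$, we get
\[
\mathrm{tr}(G^\top Q) = \mathrm{tr}\big((ZZ^\top) X^\top U V^\top\big).
\]
This turns the claim into a trace inequality between $X$ and its own polar factor, weighted by a positive definite matrix.

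Next we substitute $X^\top = VSU^\top$. Using $U^\top U = I_r$, this gives $X^\top U V^\top = V S V^\top$. Hence
\[
\mathrm{tr}(G^\top Q) = \mathrm{tr}\big((ZZ^\top) V S V^\top\big) = \mathrm{tr}\big(S^{1/2} V^\top (ZZ^\top) V S^{1/2}\big).
\]
The last expression is the trace of a positive semidefinite matrix, so it is nonnegative. Equivalently, $VSV^\top\succeq 0$ and $ZZ^\top\succ 0$, and the trace of a product of two PSD matrices is nonnegative.

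For completeness we can also show the inequality is strict whenever $G\neq 0$. In that case $X\neq 0$, so $r\ge 1$, and $V^\top ZZ^\top V\succ 0$ because $V$ has full column rank. The trace is then strictly positive. If $G=0$, then $Q=0$ and the statement holds trivially.

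There is no substantial obstacle here. The only point needing care is to write $G$ as $X\,ZZ^\top$ rather than working with $G$ directly, so that the polar factor of $X$ cancels against $X$ itself. Without that step one would face an awkward comparison between $G$ and the polar factor of a different matrix $GP$.
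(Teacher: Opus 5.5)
Your proof is correct and is essentially the paper's argument: write $G = G(ZZ^\top)^{-1}\,ZZ^\top$ so that the polar factor cancels, giving $\mathrm{tr}(G^\top Q) = \mathrm{tr}(ZZ^\top V S V^\top) \ge 0$. Your extra observation that the inequality is strict whenever $G \neq 0$, using $ZZ^\top \succ 0$, is a valid small strengthening that the paper does not state.
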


\begin{proof}[Proof of Proposition~\ref{prop:Newton--Muon-descent}]
Let ${G}_{\mathrm{r}} \coloneqq G(ZZ^\top)^{-1}$, and write its compact SVD as ${G}_{\mathrm{r}}={U}_{G}{S}_{G}{V}_{G}^\top$. Then $Q=\mathrm{msgn}({G}_{\mathrm{r}})={U}_{G}{V}_{G}^\top$. Since $G={G}_{\mathrm{r}}ZZ^\top$, we have
\[
\mathrm{tr}(G^\top Q)
=
\mathrm{tr}\big(( {G}_{\mathrm{r}}ZZ^\top )^\top Q\big)
=
\mathrm{tr}\big(ZZ^\top {G}_{\mathrm{r}}^\top Q\big)
=
\mathrm{tr}\big(ZZ^\top {V}_{G}{S}_{G}{V}_{G}^\top\big).
\]
Now $ZZ^\top$ and ${V}_{G}{S}_{G}{V}_{G}^\top$ are both positive semidefinite. Therefore, the trace of their product is nonnegative, so $\mathrm{tr}(G^\top Q) \ge 0$.
\end{proof}

Making a connection to the standard Muon optimizer, we establish the following result.

\begin{corollary}[Isotropic activations recover Muon]
\label{cor:isotropic-aa-muon}
Under the assumptions of Theorem~\ref{thm:isotropic-sigma-newton-muon}, if additionally
${Z}{Z}^\top \propto {I}_n$, then
\[
{Q}^\star  \propto  \mathrm{msgn}({G}).
\]
That is, the triplet quadratic surrogate model recovers the standard Muon update in this case.
\end{corollary}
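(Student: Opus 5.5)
The plan is to derive Corollary~\ref{cor:isotropic-aa-muon} directly from Theorem~\ref{thm:isotropic-sigma-newton-muon} by substituting the isotropic second moment and using the scale invariance of the matrix sign. Under the assumptions of Theorem~\ref{thm:isotropic-sigma-newton-muon}, we already know that $Q^\star \propto \mathrm{msgn}\big(G(ZZ^\top)^{-1}\big)$. The additional hypothesis gives $ZZ^\top = c\,I_n$ for some scalar $c$. Assumption~\ref{assump:exact-newton} requires $ZZ^\top \succ 0$, so $c>0$, and therefore $(ZZ^\top)^{-1} = c^{-1} I_n$ and $G(ZZ^\top)^{-1} = c^{-1} G$.

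The one fact needed is that $\mathrm{msgn}$ is invariant under positive scaling. I would check this directly from the definition. If $G = USV^\top$ is a compact SVD, then $c^{-1}G = U(c^{-1}S)V^\top$ is again a compact SVD, because $c^{-1}S$ is diagonal with positive entries and $U$ and $V$ still have orthonormal columns. Hence $\mathrm{msgn}(c^{-1}G) = UV^\top = \mathrm{msgn}(G)$. There is also a well-definedness point to confirm: $UV^\top$ equals the orthogonal polar factor on the range of $G$, so it does not depend on which compact SVD is chosen.

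Combining the two steps gives $Q^\star \propto \mathrm{msgn}(c^{-1}G) = \mathrm{msgn}(G)$, which is the Muon direction. I do not expect any real obstacle here. The only things to state carefully are that $c>0$, which follows from $ZZ^\top\succ 0$, and that the constant of proportionality inherited from Theorem~\ref{thm:isotropic-sigma-newton-muon} (the square root of the scale in $\Sigma_W \propto I_m$) is untouched by this substitution. If $G=0$, both sides are zero and the statement holds trivially.
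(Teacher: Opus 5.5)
Your proposal is correct and is essentially the argument the paper intends: the paper gives no separate proof and treats the corollary as immediate from Theorem~\ref{thm:isotropic-sigma-newton-muon}. Substituting $ZZ^\top = cI_n$ with $c>0$ (from $ZZ^\top\succ 0$) and using that $\mathrm{msgn}$ is unchanged by positive scaling is exactly the one-line reasoning needed.
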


However, as we will demonstrate in Section~\ref{sec:real-llm-exp}, ${Z}{Z}^\top$ is highly anisotropic in practice. Therefore, the right preconditioner $({Z}{Z}^\top)^{-1}$, which is readily available during training, can significantly alter the singular spaces of the gradient matrix $G$. Furthermore, the added computational cost of computing $(G(ZZ^\top)^{-1})$ is insignificant compared to the empirical performance gains of Newton--Muon over standard Muon.

It is also instructive to interpret Newton--Muon through the lens of orthogonal equivariance. Standard Muon is basis-free in the sense that for any orthogonal matrices ${O}_m\in\mathbb{R}^{m\times m}$ and ${O}_n\in\mathbb{R}^{n\times n}$, we have
\[
\mathrm{msgn}({O}_m {G} {O}_n)={O}_m\mathrm{msgn}({G}){O}_n.
\]
Thus, rotating the gradient matrix on the left or right correspondingly rotates the Muon update in the exact same manner. For Newton--Muon, the analogous commutative diagram holds if the right rotation of the gradient matrix $G$ is accompanied by the coordinate transformation $Z\mapsto O_n^\top Z$. Consequently,
\[
\mathrm{msgn} \left({O}_m {G} {O}_n \big((O_n^\top Z)(O_n^\top Z)^\top\big)^{-1}\right)
=
{O}_m\mathrm{msgn}\big({G}({Z}{Z}^\top)^{-1}\big){O}_n.
\]
In contrast, if one rotates ${G}$ on the right while keeping ${Z}$ fixed, right equivariance generally fails unless ${O}_n$ commutes with ${Z}{Z}^\top$, as illustrated in Figure~\ref{fig:muon-newton-muon-equivariance}.

\begin{figure}[!htb]
\centering
\[
\begin{tikzcd}[column sep=huge, row sep=large]
G \arrow[r] \arrow[d]
& \mathrm{msgn}(G) \arrow[d] \\
{O}_m G {O}_n \arrow[r]
& {O}_m\mathrm{msgn}(G){O}_n
\end{tikzcd}
\qquad
\begin{tikzcd}[column sep=huge, row sep=large]
(G,Z) \arrow[r] \arrow[d]
& \mathrm{msgn}\big(G(ZZ^\top)^{-1}\big) \arrow[d] \\
({O}_m G {O}_n,{O}_n^\top Z) \arrow[r]
& {O}_m \mathrm{msgn}\big( G (ZZ^\top)^{-1}\big){O}_n
\end{tikzcd}
\]
\caption{Left: standard Muon is orthogonally equivariant. Right: Newton--Muon takes the pair $(G,Z)$ as input, and the diagram commutes if the right rotation of $G$ is accompanied by the transformation $Z\mapsto O_n^\top Z$.}
\label{fig:muon-newton-muon-equivariance}
\end{figure}

\section{Convergence Analysis of Newton--Muon: Case Study}
\label{sec:single-spike-convergence}

This section presents a case study of Newton--Muon in a simple quadratic model under a single spike assumption on ${Z}{Z}^\top$, with one spiked eigendirection and an isotropic bulk. A fully general convergence analysis is difficult, so we study this simple model in which explicit rates can be obtained. The main finding is that Newton--Muon achieves a contraction rate independent of the spike condition number $\kappa$, whereas the rates of gradient descent and Muon scale as $1/\kappa$ as $\kappa$ increases. We consider the iterative optimization of ${W}$ under full-batch gradient descent, Muon, and Newton--Muon, using the learning rate sequence $\{\eta_t\}_{t\ge 0}$. We consider the objective
\[
  \min_W f({W}) \coloneqq \frac12 \|{W}Z-W^\star{Z}\|_F^2,
\]
where ${Z}\in\R^{n\times N}$ is fixed, $W^\star\in\R^{m\times n}$ is the ground-truth matrix, and $\|\cdot\|_F$ denotes the Frobenius norm. The exact gradient is
\begin{equation}
  \label{eq:ss-grad}
  \nabla_{{W}} f({W}) = ({W}-W^\star){Z}{Z}^\top.
\end{equation}
Denote $W_t$ as the weight matrix at iteration $t$. The resulting updates are
\begin{align}
  \label{eq:ss-gd-update}
  \text{GD:}\quad
  W_{t+1}
  &=
  W_t-\eta_t \nabla_{W} f(W_t)
  =
  W_t-\eta_t ( W_t-W^\star )ZZ^\top, \\
  \label{eq:ss-muon-update}
  \text{Muon:}\quad
  W_{t+1}
  &=
  W_t-\eta_t \mathrm{msgn}\big(\nabla_{W} f(W_t)\big)
  =
  W_t-\eta_t \mathrm{msgn}\big(( W_t-W^\star )ZZ^\top\big), \\
  \label{eq:ss-Newton--Muon-update}
  \text{Newton--Muon:}\quad
  W_{t+1}
  &=
  W_t-\eta_t \mathrm{msgn}\big(\nabla_{W} f(W_t)(ZZ^\top)^{-1}\big)
  =
  W_t-\eta_t \mathrm{msgn}(W_t-W^\star).
\end{align}
Before going into the theoretical details, we can already see the potential advantage of Newton--Muon over gradient descent and Muon. From the update equations~\eqref{eq:ss-gd-update}--\eqref{eq:ss-Newton--Muon-update}, gradient descent and Muon apply the right-multiplication factor $ZZ^\top$ to the displacement $W_t-W^\star$. When $ZZ^\top$ is highly anisotropic, as in LLM training, where the activation matrix $Z$ has bounded stable rank~\citep{davis2025spectral} and a very large condition number (Table~\ref{tab:real-K-stats}), this anisotropy can distort the effective update, over-emphasizing some directions toward $W^\star$ while suppressing others. In contrast, Newton--Muon explicitly cancels this right-side anisotropy by multiplying by $(ZZ^\top)^{-1}$.

\paragraph{Single spike model.} We now introduce the single-spike model and restrict to a low-dimensional invariant subspace spanned by a single spiked eigendirection of ${Z}{Z}^\top$ and its isotropic complement.
We initialize ${W}_0=0$ and assume that the singular values of ${Z}$ satisfy $\sigma_1^2=\kappa>1$ and $\sigma_2^2=\cdots=\sigma_n^2=1$, so that
\begin{equation}
  \label{eq:ss-spike}
  {Z}{Z}^\top = {U}_{Z}\mathrm{diag}(\kappa,1,\ldots,1){U}_{Z}^\top,
  \qquad \kappa>1,
\end{equation}
for some orthogonal matrix ${U}_{Z}$. Let $\boldsymbol{e}_1\in\R^n$ denote the first column of $U_Z$. We assume \(W^\star\) has the following structured rank-$r$ form
\begin{equation}
  \label{eq:ss-init-decomp}
  W^\star
  =
  -\boldsymbol{u}_1 \big(\alpha_{1,0} \boldsymbol{e}_1^\top + \beta_{1,0} \boldsymbol{b}_1^\top\big)
  -
  \sum_{i=2}^r \boldsymbol{u}_i \big(\beta_{i,0} \boldsymbol{b}_i^\top\big),
\end{equation}
where $\boldsymbol{u}_1,\dots,\boldsymbol{u}_r\in\R^m$ are orthonormal, and $\boldsymbol{b}_1,\dots,\boldsymbol{b}_r\in\R^n$ are orthonormal with $\boldsymbol{b}_i\perp \boldsymbol{e}_1$ for all $i$. In particular, this requires $r\le m$ and $r\le n-1$. We assume $(\alpha_{1,0},\beta_{1,0})\neq(0,0)$ and $\beta_{i,0}\neq 0$ for $i\ge 2$.
This assumption ensures that (i) $W^\star$ is rank-$r$, and (ii) only the first mode mixes the spiked direction $\boldsymbol{e}_1$ with a non-spike direction; all other modes lie entirely in the isotropic subspace $\boldsymbol{e}_1^\perp$.
Since ${W}_0=0$, the initial residual ${W}_0-W^\star=-W^\star$ lies in the same family. We will show that, for all $t$,
\begin{equation}
  \label{eq:ss-residual-decomp}
  W_t-W^\star
  =
  \boldsymbol{u}_1 \big(\alpha_{1,t} \boldsymbol{e}_1^\top + \beta_{1,t} \boldsymbol{b}_1^\top\big)
  +
  \sum_{i=2}^r \boldsymbol{u}_i \big(\beta_{i,t} \boldsymbol{b}_i^\top\big).
\end{equation}

The main consequence of \eqref{eq:ss-spike} and \eqref{eq:ss-init-decomp} is that the dynamics of gradient descent, Muon, and Newton--Muon decouple across the modes $i=1,\dots,r$, yielding explicit one-dimensional recursions for the coefficients. We adopt the convention that $\mathrm{msgn}(0)=0$, $\beta/|\beta|=0$ when $\beta=0$, and $(\alpha,\beta)/\sqrt{\alpha^2+\beta^2}=(0,0)$ when $(\alpha,\beta)=(0,0)$.

\begin{lemma}[Dynamics under the single spike model]
\label{lem:ss-mode-wise}
Under \eqref{eq:ss-spike} and \eqref{eq:ss-init-decomp}, gradient descent \eqref{eq:ss-gd-update}, Muon \eqref{eq:ss-muon-update}, and Newton--Muon \eqref{eq:ss-Newton--Muon-update} preserve the decomposition \eqref{eq:ss-residual-decomp}. In particular, the dynamics decouple across modes $i=1,\dots,r$, and the coefficients satisfy the following recursions.
\begin{itemize}
\item For gradient descent,
\begin{equation}
\label{eq:ss-rec-gd}
\alpha_{1,t+1}=(1-\eta_t\kappa)\alpha_{1,t},
\qquad
\beta_{i,t+1}=(1-\eta_t)\beta_{i,t},\quad i=1,\dots,r.
\end{equation}

\item For Muon,
\begin{equation}
\label{eq:ss-rec-muon}
\alpha_{1,t+1}
=
\alpha_{1,t}
-
\eta_t \frac{\kappa\alpha_{1,t}}{\sqrt{\kappa^2\alpha_{1,t}^2+\beta_{1,t}^2}},
\qquad
\beta_{i,t+1}
=
\begin{cases}
\beta_{1,t}
-\eta_t \bigl(\beta_{1,t}/\sqrt{\kappa^2\alpha_{1,t}^2+\beta_{1,t}^2}\bigr),
& i=1, \\
\beta_{i,t}
-\eta_t \bigl(\beta_{i,t}/|\beta_{i,t}|\bigr),
& i=2,\dots,r.
\end{cases}
\end{equation}

\item For Newton--Muon,
\begin{equation}
\label{eq:ss-rec-Newton--Muon}
\alpha_{1,t+1}
=
\alpha_{1,t}
-
\eta_t \frac{\alpha_{1,t}}{\sqrt{\alpha_{1,t}^2+\beta_{1,t}^2}},
\qquad
\beta_{i,t+1}
=
\begin{cases}
\beta_{1,t}
-\eta_t \bigl(\beta_{1,t}/\sqrt{\alpha_{1,t}^2+\beta_{1,t}^2}\bigr),
& i=1, \\
\beta_{i,t}
-\eta_t \bigl(\beta_{i,t}/|\beta_{i,t}|\bigr),
& i=2,\dots,r.
\end{cases}
\end{equation}
\end{itemize}
\end{lemma}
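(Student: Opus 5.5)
The proof is an induction on $t$. Suppose $R_t \coloneqq W_t-W^\star$ has the form \eqref{eq:ss-residual-decomp}. We compute each update explicitly and check that $R_{t+1}=R_t-\eta_t D_t$ has the same form, reading off the new coefficients. The base case is immediate: $W_0=0$ gives $R_0=-W^\star$, which is exactly \eqref{eq:ss-residual-decomp} with the initial coefficients by \eqref{eq:ss-init-decomp}.

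First we compute $R_tZZ^\top$. By \eqref{eq:ss-spike}, $ZZ^\top\boldsymbol{e}_1=\kappa\boldsymbol{e}_1$ and $ZZ^\top\boldsymbol{b}_i=\boldsymbol{b}_i$, since $\boldsymbol{b}_i\perp\boldsymbol{e}_1$ lies in the unit eigenspace. Because $ZZ^\top$ is symmetric, $\boldsymbol{e}_1^\top ZZ^\top=\kappa\boldsymbol{e}_1^\top$ and $\boldsymbol{b}_i^\top ZZ^\top=\boldsymbol{b}_i^\top$. Hence
\[
R_tZZ^\top=\boldsymbol{u}_1(\kappa\alpha_{1,t}\boldsymbol{e}_1^\top+\beta_{1,t}\boldsymbol{b}_1^\top)+\sum_{i\ge2}\boldsymbol{u}_i\beta_{i,t}\boldsymbol{b}_i^\top .
\]
The gradient descent recursion \eqref{eq:ss-rec-gd} follows at once by subtracting $\eta_t$ times this. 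For Newton--Muon, \eqref{eq:ss-ss-Newton--Muon-update} reduces the direction to $\mathrm{msgn}(R_t)$. Muon's direction is $\mathrm{msgn}(M)$ with $M=R_tZZ^\top$ as displayed above.

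The key step, and the only nontrivial one, is computing $\mathrm{msgn}$ of a matrix of the form
\[
M=\boldsymbol{u}_1\boldsymbol{v}_1^\top+\sum_{i\ge2}\boldsymbol{u}_i\,\beta_i\boldsymbol{b}_i^\top,
\qquad \boldsymbol{v}_1=a\boldsymbol{e}_1+\beta_1\boldsymbol{b}_1 .
\]
The vectors $\boldsymbol{v}_1,\boldsymbol{b}_2,\dots,\boldsymbol{b}_r$ are mutually orthogonal, since $\boldsymbol{e}_1$ and $\boldsymbol{b}_1$ are both orthogonal to every $\boldsymbol{b}_i$ with $i\ge2$. The $\boldsymbol{u}_i$ are orthonormal. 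So, after discarding the zero terms, a compact SVD is
\[
M=\boldsymbol{u}_1\|\boldsymbol{v}_1\|(\boldsymbol{v}_1/\|\boldsymbol{v}_1\|)^\top+\sum_{i\ge2}(\mathrm{sign}(\beta_i)\boldsymbol{u}_i)|\beta_i|\boldsymbol{b}_i^\top .
\]
This gives
\[
\mathrm{msgn}(M)=\boldsymbol{u}_1\frac{\boldsymbol{v}_1^\top}{\|\boldsymbol{v}_1\|}+\sum_{i\ge2}\boldsymbol{u}_i\frac{\beta_i}{|\beta_i|}\boldsymbol{b}_i^\top .
\]
The stated conventions make this valid even when some coefficient vanishes: a zero mode contributes no singular pair and matches the $0/0=0$ convention. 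The result does not depend on the choice of SVD, since $\mathrm{msgn}$ is well defined; repeated singular values do not matter.

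Finally we apply this formula in both cases and subtract $\eta_t$ times it from $R_t$. For Muon, take $a=\kappa\alpha_{1,t}$, so that $\|\boldsymbol{v}_1\|=\sqrt{\kappa^2\alpha_{1,t}^2+\beta_{1,t}^2}$. For Newton--Muon, take $a=\alpha_{1,t}$, so that $\|\boldsymbol{v}_1\|=\sqrt{\alpha_{1,t}^2+\beta_{1,t}^2}$. In each case the update is again a combination of $\boldsymbol{u}_1\boldsymbol{e}_1^\top$, $\boldsymbol{u}_1\boldsymbol{b}_1^\top$ and $\boldsymbol{u}_i\boldsymbol{b}_i^\top$ for $i\ge2$. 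This closes the induction, and matching coefficients yields \eqref{eq:ss-rec-muon} and \eqref{eq:ss-rec-Newton--Muon}. Each mode's new coefficients depend only on that mode's old ones, which is exactly the stated decoupling.
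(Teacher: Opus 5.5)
Your proof is correct and takes essentially the same route as the paper. Both compute $(W_t-W^\star)ZZ^\top$ from the eigenstructure of $ZZ^\top$, note that the modes have orthonormal left factors $\boldsymbol{u}_i$ and mutually orthogonal right factors, read off $\mathrm{msgn}$ mode by mode, and match coefficients. Your explicit induction, and the compact SVD that absorbs $\mathrm{sign}(\beta_i)$ into the left singular vector, just spell out details the paper leaves implicit.
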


\begin{proof}[Proof of Lemma~\ref{lem:ss-mode-wise}]
Define $\boldsymbol{v}_{1,t}\coloneqq \alpha_{1,t}\boldsymbol{e}_1+\beta_{1,t}\boldsymbol{b}_1$ and, for $i\ge 2$, $\boldsymbol{v}_{i,t}\coloneqq \beta_{i,t}\boldsymbol{b}_i$, so that
\begin{equation}
  \label{eq:ss-v-decomp}
  W_t-W^\star=\sum_{i=1}^r \boldsymbol{u}_i\boldsymbol{v}_{i,t}^\top.
\end{equation}
Since $\{\boldsymbol{u}_i\}_{i=1}^r$ are orthonormal and $\{\boldsymbol{b}_i\}_{i=1}^r$ are orthonormal with $\boldsymbol{b}_i\perp \boldsymbol{e}_1$, we have $\boldsymbol{v}_{i,t}\perp \boldsymbol{v}_{j,t}$ for $i\neq j$ for every $t$.
Under \eqref{eq:ss-spike}, $ZZ^\top \boldsymbol{e}_1=\kappa \boldsymbol{e}_1$, and the eigenspace with eigenvalue $1$ is $\boldsymbol{e}_1^\perp$, so $ZZ^\top \boldsymbol{b}_i=\boldsymbol{b}_i$ for all $i=1,\dots,r$. Hence
\begin{equation}
  \label{eq:ss-v-mapped}
  ZZ^\top \boldsymbol{v}_{1,t}=\kappa\alpha_{1,t}\boldsymbol{e}_1+\beta_{1,t}\boldsymbol{b}_1 \in \mathrm{span}\{\boldsymbol{e}_1,\boldsymbol{b}_1\},
  \qquad
  ZZ^\top \boldsymbol{v}_{i,t}=\boldsymbol{v}_{i,t}\in \mathrm{span}\{\boldsymbol{b}_i\}\ \ (i\ge 2).
\end{equation}
Moreover, the transformed vectors $\{ZZ^\top\boldsymbol{v}_{i,t}\}_{i=1}^r$ remain mutually orthogonal across $i$. Using \eqref{eq:ss-grad} and \eqref{eq:ss-v-decomp},
\begin{equation}
  \label{eq:ss-grad-decomp}
  \nabla_W f(W_t)=(W_t-W^\star)ZZ^\top
  =
  \sum_{i=1}^r \boldsymbol{u}_i\big((ZZ^\top\boldsymbol{v}_{i,t})^\top\big).
\end{equation}
In particular, by \eqref{eq:ss-v-mapped},
\[
(ZZ^\top\boldsymbol{v}_{1,t})^\top=(\kappa\alpha_{1,t})\boldsymbol{e}_1^\top+\beta_{1,t}\boldsymbol{b}_1^\top,
\qquad
(ZZ^\top\boldsymbol{v}_{i,t})^\top=\beta_{i,t}\boldsymbol{b}_i^\top \ (i\ge 2).
\]

For gradient descent, substituting \eqref{eq:ss-grad-decomp} into \eqref{eq:ss-gd-update} gives
\[
W_{t+1}-W^\star
=
(W_t-W^\star)-\eta_t(W_t-W^\star)ZZ^\top,
\]
and matching coefficients in the basis $\{\boldsymbol{u}_1\boldsymbol{e}_1^\top,\boldsymbol{u}_1\boldsymbol{b}_1^\top,\boldsymbol{u}_i\boldsymbol{b}_i^\top\}$ yields $\alpha_{1,t+1}=(1-\eta_t\kappa)\alpha_{1,t}$ and $\beta_{i,t+1}=(1-\eta_t)\beta_{i,t}$ for all $i=1,\dots,r$, i.e., \eqref{eq:ss-rec-gd}.

For Muon, since the right factors $\{ZZ^\top\boldsymbol{v}_{i,t}\}_{i=1}^r$ in \eqref{eq:ss-grad-decomp} are mutually orthogonal and the left factors $\{\boldsymbol{u}_i\}_{i=1}^r$ are orthonormal, the matrix sign of $(W_t-W^\star)ZZ^\top$ is given by
\[
\mathrm{msgn}\big((W_t-W^\star)ZZ^\top\big)
=
\sum_{i:\|ZZ^\top\boldsymbol{v}_{i,t}\|_2>0}
\boldsymbol{u}_i
\left(\frac{(ZZ^\top\boldsymbol{v}_{i,t})^\top}{\|ZZ^\top\boldsymbol{v}_{i,t}\|_2}\right).
\]
For $i\ge 2$, $ZZ^\top\boldsymbol{v}_{i,t}=\boldsymbol{v}_{i,t}$ and $\|ZZ^\top\boldsymbol{v}_{i,t}\|_2=|\beta_{i,t}|$, yielding the $i\ge 2$ case in \eqref{eq:ss-rec-muon}. For $i=1$, we have
$ZZ^\top\boldsymbol{v}_{1,t}=\kappa\alpha_{1,t}\boldsymbol{e}_1+\beta_{1,t}\boldsymbol{b}_1$ and
$\|ZZ^\top\boldsymbol{v}_{1,t}\|_2=\sqrt{\kappa^2\alpha_{1,t}^2+\beta_{1,t}^2}$, so substitution into \eqref{eq:ss-muon-update} yields the $i=1$ case in \eqref{eq:ss-rec-muon}. This also preserves \eqref{eq:ss-residual-decomp}.

For Newton--Muon, since the right factors $\{\boldsymbol{v}_{i,t}\}_{i=1}^r$ are mutually orthogonal and the left factors $\{\boldsymbol{u}_i\}_{i=1}^r$ are orthonormal, the matrix sign of $W_t-W^\star$ is obtained mode-by-mode:
\[
\mathrm{msgn}(W_t-W^\star)
=
\sum_{i:\|\boldsymbol{v}_{i,t}\|_2>0}
\boldsymbol{u}_i
\left(\frac{\boldsymbol{v}_{i,t}}{\|\boldsymbol{v}_{i,t}\|_2}\right)^\top.
\]
Substituting into \eqref{eq:ss-Newton--Muon-update} gives
\[
W_{t+1}-W^\star
=
(W_t-W^\star)-\eta_t\mathrm{msgn}(W_t-W^\star),
\]
and matching coefficients yields the $i=1$ and $i\ge 2$ cases in \eqref{eq:ss-rec-Newton--Muon}. This preserves \eqref{eq:ss-residual-decomp} and completes the proof.
\end{proof}

Lemma~\ref{lem:ss-mode-wise} reduces the matrix iterates to scalar recursions for the coefficients $\{\alpha_{1,t},\beta_{i,t}\}$. In particular, convergence of $W_t\to W^\star$ is equivalent to $\alpha_{1,t}\to 0$ and $\beta_{i,t}\to 0$ for all $i=1,\dots,r$, and the effect of the spike $\kappa$ is isolated to the mixed $(\alpha_{1,t},\beta_{1,t})$ mode.

\begin{corollary}[Convergence rates]
\label{cor:ss-rates}
Assume \eqref{eq:ss-spike} and \eqref{eq:ss-init-decomp}, and let $r_0>0$ satisfy $|\alpha_{1,0}|\le r_0$ and $|\beta_{i,0}|\le r_0$ for all $i=1,\dots,r$. Define $r_t$ recursively so that $|\alpha_{1,t}|\le r_t$ and $|\beta_{i,t}|\le r_t$ for all $i=1,\dots,r$, and at each step choose $\eta_t$ greedily to minimize the worst-case next bound $r_{t+1}$. Then, for any target $0<\varepsilon<r_0$, gradient descent, Newton--Muon, and Muon reach $|\alpha_{1,t}|\le \varepsilon$ and $|\beta_{i,t}|\le \varepsilon$ for all $i=1,\dots,r$ after at most $T_{\mathrm{GD}}(\varepsilon)=O(\kappa\log(r_0/\varepsilon))$, $T_{\mathrm{NM}}(\varepsilon)=O(\log(r_0/\varepsilon))$, and $T_{\mathrm{M}}(\varepsilon)=O(\kappa\log(r_0/\varepsilon))$ iterations, respectively. In particular, Newton--Muon converges faster than both gradient descent and Muon by a factor of order $\kappa$.
\end{corollary}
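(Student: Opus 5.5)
The plan is to work entirely with the scalar recursions of Lemma~\ref{lem:ss-mode-wise}, which decouple the modes. For each method I will derive a worst-case bound $r_{t+1}=\rho\, r_t$ (or $r_{t+1}=r_t-c$ in the non-contractive phases) under the greedy choice of $\eta_t$. The iteration count then follows from $T=\log(r_0/\varepsilon)/\log(1/\rho)$, using $\log(1/\rho)\gtrsim 1-\rho$.

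\textbf{Gradient descent.} Each coordinate is multiplied by either $1-\eta_t\kappa$ or $1-\eta_t$. The worst-case factor is $\max(|1-\eta_t\kappa|,|1-\eta_t|)$. This is minimized at $\eta_t=2/(\kappa+1)$, giving $\rho=(\kappa-1)/(\kappa+1)$. Since $1-\rho=2/(\kappa+1)$, we get $T_{\mathrm{GD}}=O(\kappa\log(r_0/\varepsilon))$.

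\textbf{Newton--Muon.} The recursion is exactly $\boldsymbol v\mapsto \boldsymbol v-\eta_t \boldsymbol v/\|\boldsymbol v\|$ on each mode. Each mode's vector is therefore shrunk radially by the same amount $\eta_t$, as long as $\eta_t\le\|\boldsymbol v_{i,t}\|$, and $\kappa$ never appears. Bounding the mixed mode by $\sqrt2\, r_t$ would lose a constant. I will instead track $R_t:=\max_i\|\boldsymbol v_{i,t}\|$, which satisfies $R_0\le\sqrt2 r_0$ and dominates all coordinates. Choosing $\eta_t=R_t/2$ gives, for each mode, $\big|\|\boldsymbol v\|-\eta_t\big|\le R_t/2$, hence $R_{t+1}\le R_t/2$. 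This yields $T_{\mathrm{NM}}=O(\log(r_0/\varepsilon))$. The greedy choice does at least as well, since it optimizes the same worst-case bound.

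\textbf{Muon.} For modes $i\ge2$ the update is sign descent with step $\eta_t$. The mixed mode sees the effective steps
\[
\eta_t\kappa/\sqrt{\kappa^2\alpha^2+\beta^2}\ \text{ on }\alpha,\qquad \eta_t/\sqrt{\kappa^2\alpha^2+\beta^2}\ \text{ on }\beta.
\]
These act as contraction factors $1-\eta_t\kappa/D$ and $1-\eta_t/D$ with $D=\sqrt{\kappa^2\alpha^2+\beta^2}$. This is the GD structure with rescaled step $\eta_t/D$, and $D$ is controlled by the worst case $D\le\sqrt{\kappa^2+1}\,r_t$. For the upper bound I will take $\eta_t\asymp r_t$, so the $\beta$ coordinate and the $i\ge2$ modes shrink by a constant factor. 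The $\alpha$ coordinate, however, can be the one pinning $D$ at scale $\kappa r_t$. In that case $\beta$ shrinks only by a factor $1-O(1/\kappa)$, giving $1-\rho=\Theta(1/\kappa)$ and $T_{\mathrm{M}}=O(\kappa\log(r_0/\varepsilon))$.

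\textbf{Main obstacle.} The obstacle is making the greedy worst-case recursion for Muon rigorous. The adversary chooses $(\alpha,\beta)$ inside the box $[-r_t,r_t]^2$, and the step must also avoid overshooting modes with tiny $|\beta_{i,t}|$. I would handle overshoot by noting that $|x-\eta\,\mathrm{sign}(x)|\le\max(|x|,\eta)$, so taking $\eta_t\le r_t/2$ keeps every coordinate bounded by $\max\{r_t-\text{progress},\,r_t/2\}$. I would then analyze the two extremal configurations, $\alpha=\pm r_t$ and $|\beta|$ small or equal to $r_t$, to extract the factor $1-c/\kappa$.

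Finally, since the statement only claims upper bounds, the ``faster by order $\kappa$'' remark amounts to comparing the two bound expressions. I would add a one-line lower-bound remark for GD, whose factor $(\kappa-1)/(\kappa+1)$ is tight, to justify the comparison.
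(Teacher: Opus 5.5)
Your gradient-descent step matches the paper's. For Muon you already have the inequality $D\le\sqrt{\kappa^2+1}\,r_t$ that drives the paper's argument. Two steps still need repair before the outline proves the statement as posed.

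\textbf{Newton--Muon.} The corollary concerns the greedy rule, whose only state is the box bound $r_t$. It minimizes the supremum, over all $|\alpha_{1,t}|,|\beta_{i,t}|\le r_t$, of the largest next coordinate. Your halving argument analyzes a different rule, $\eta_t=R_t/2$ with $R_t=\max_i\|\boldsymbol v_{i,t}\|$. The sentence ``the greedy choice does at least as well, since it optimizes the same worst-case bound'' is therefore not correct: greedy never sees $R_t$. Inside the box the mixed mode can have norm $\sqrt2\,r_t$ (take $\alpha_{1,t}=\beta_{1,t}=r_t$), so the $\sqrt2$ you tried to avoid is intrinsic to the greedy recursion, not an artifact of a loose bound. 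The repair takes one line and keeps your radial idea. Every mode norm is at most $\sqrt2\,r_t$, so with $\eta=r_t/\sqrt2$ the relation $\|\boldsymbol v_{i,t+1}\|\le\bigl|\|\boldsymbol v_{i,t}\|-\eta\bigr|$ bounds every coordinate by $r_t/\sqrt2$. Since greedy minimizes this worst case, $r_{t+1}\le r_t/\sqrt2$. The paper instead splits coordinatewise into $\eta_t\le s_t$ and $\eta_t\ge s_t$. That gives the exact worst case $\max\{\eta_t,\,r_t-\eta_t/\sqrt2\}$ and the sharp greedy factor $2-\sqrt2$. Your version trades that constant for a case-free argument, which is harmless for the $O(\cdot)$ claim.

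\textbf{Muon.} The step you flag as the main obstacle is left unresolved. Examining two extremal configurations would not give a bound uniform over the box. Also, ``$\beta$ shrinks only by a factor $1-O(1/\kappa)$'' is the lower-bound direction; the corollary needs every coordinate to shrink by \emph{at least} $1-c/\kappa$. The paper closes this with a two-case argument built from ingredients you already have.
\begin{itemize}
\item Without overshoot ($\eta_t\kappa\le D$ for $\alpha$, $\eta_t\le D$ for $\beta$): the bounds $D\le\sqrt{\kappa^2+1}\,r_t$ and $|\alpha_{1,t}|,|\beta_{1,t}|\le r_t$ give $|\alpha_{1,t+1}|,|\beta_{1,t+1}|\le r_t-\eta_t/\sqrt{\kappa^2+1}$.
\item With overshoot: $D\ge\kappa|\alpha_{1,t}|$ and $D\ge|\beta_{1,t}|$ give $|\alpha_{1,t+1}|,|\beta_{1,t+1}|\le\eta_t$.
\end{itemize}
Together with the sign-descent modes this yields $r_{t+1}\le\max\{\eta_t,\,r_t-\eta_t/\sqrt{\kappa^2+1}\}$. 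Hence any $\eta_t=c\,r_t$ with fixed $c\in(0,1)$ gives $1-\rho\ge\min\{1-c,\,c/\sqrt{\kappa^2+1}\}$, and greedy does at least as well.

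If you add a lower-bound remark to support ``faster than Muon,'' you need it for Muon too, not only for GD. The Muon bound above is sharp: it is approached at $\alpha_{1,t}=\beta_{1,t}=r_t$ and as a single coordinate tends to $0$. So the greedy factor $\sqrt{\kappa^2+1}/(\sqrt{\kappa^2+1}+1)$ is exact for the recursion.
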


\begin{proof}[Proof of Corollary~\ref{cor:ss-rates}]
For gradient descent, \eqref{eq:ss-rec-gd} gives $|\alpha_{1,t+1}| = |1-\eta_t\kappa| |\alpha_{1,t}|$ and $|\beta_{i,t+1}| = |1-\eta_t| |\beta_{i,t}|$ for $i=1,\dots,r$. Hence, if $|\alpha_{1,t}|, |\beta_{i,t}| \le r_t$, then $r_{t+1} \le \max\{|1-\eta_t\kappa|,\ |1-\eta_t|\} r_t$. The greedy choice minimizes the right-hand side by solving $|1-\eta_t\kappa| = |1-\eta_t|$, which gives $\eta_t = 2/(\kappa+1)$ and yields $r_{t+1} = (\kappa-1)/(\kappa+1) r_t$.

For Newton--Muon, let $s_t \coloneqq \sqrt{\alpha_{1,t}^2+\beta_{1,t}^2}$. By \eqref{eq:ss-rec-Newton--Muon}, $\alpha_{1,t+1} = \bigl(1-\eta_t/s_t\bigr)\alpha_{1,t}$ and $\beta_{1,t+1} = \bigl(1-\eta_t/s_t\bigr)\beta_{1,t}$. Since $|\alpha_{1,t}|, |\beta_{1,t}| \le r_t$, we have $s_t \le \sqrt{2}\,r_t$. We first bound $\alpha_{1,t+1}$ and $\beta_{1,t+1}$. If $\eta_t \le s_t$, we obtain $1-\eta_t/s_t \le 1-\eta_t/(\sqrt{2}\,r_t)$, so $|\alpha_{1,t+1}| \le \bigl(1-\eta_t/(\sqrt{2}\,r_t)\bigr)|\alpha_{1,t}| \le r_t-\eta_t/\sqrt{2}$, and similarly $|\beta_{1,t+1}| \le r_t-\eta_t/\sqrt{2}$. If instead $\eta_t \ge s_t$, then $|1-\eta_t/s_t|=\eta_t/s_t-1$, and therefore $|\alpha_{1,t+1}| = (\eta_t/s_t-1)|\alpha_{1,t}| \le (\eta_t/s_t)|\alpha_{1,t}| \le \eta_t$. The same argument gives $|\beta_{1,t+1}| \le \eta_t$. Combining the two cases, $|\alpha_{1,t+1}|, |\beta_{1,t+1}| \le \max\{\eta_t,\ r_t-\eta_t/\sqrt{2}\}$. Next, for $i=2,\dots,r$, \eqref{eq:ss-rec-Newton--Muon} gives $|\beta_{i,t+1}|=\bigl||\beta_{i,t}|-\eta_t\bigr|$. Since $|\beta_{i,t}| \le r_t$, this implies $|\beta_{i,t+1}| \le \max\{\eta_t,\ r_t-\eta_t\} \le \max\{\eta_t,\ r_t-\eta_t/\sqrt{2}\}$. Hence $r_{t+1} \le \max\{\eta_t,\ r_t-\eta_t/\sqrt{2}\}$. The greedy choice minimizes the right-hand side by equalizing the two terms: $\eta_t = r_t-\eta_t/\sqrt{2}$. Solving gives $\eta_t = (2-\sqrt{2})\,r_t$, and therefore $r_{t+1} = (2-\sqrt{2}) r_t$.

For Muon, let $s_t \coloneqq \sqrt{\kappa^2\alpha_{1,t}^2+\beta_{1,t}^2}$. By \eqref{eq:ss-rec-muon}, we have $\alpha_{1,t+1} = (1-\eta_t\kappa/s_t)\alpha_{1,t}$ and $\beta_{1,t+1} = (1-\eta_t/s_t)\beta_{1,t}$. Since $|\alpha_{1,t}|, |\beta_{1,t}| \le r_t$, we have $s_t \le \sqrt{\kappa^2+1}\,r_t$. 
Applying the same argument as above, we have $r_{t+1} \le \max\{\eta_t,\ r_t-\eta_t/\sqrt{\kappa^2+1}\}$. The greedy choice equalizes the two terms, so $\eta_t = r_t-\eta_t/\sqrt{\kappa^2+1}$, and therefore $r_{t+1} = \sqrt{\kappa^2+1}/(\sqrt{\kappa^2+1}+1) r_t$.

Iterating the three recursions shows that gradient descent, Newton--Muon, and Muon all decrease $r_t$ geometrically. Solving $r_t\le \varepsilon$ gives $T_{\mathrm{GD}}(\varepsilon)=O\big(\log(r_0/\varepsilon)/\log((\kappa+1)/(\kappa-1))\big)$, $T_{\mathrm{NM}}(\varepsilon)=O(\log(r_0/\varepsilon))$, and $T_{\mathrm{M}}(\varepsilon)=O\big(\log(r_0/\varepsilon)/\log(1+1/\sqrt{\kappa^2+1})\big)$. Finally, for large $\kappa$,
\[
\log\Big(\frac{\kappa+1}{\kappa-1}\Big)\asymp \frac{1}{\kappa},
\qquad
\log\Big(1+\frac{1}{\sqrt{\kappa^2+1}}\Big)\asymp \frac{1}{\kappa}.
\]
Therefore $T_{\mathrm{GD}}(\varepsilon)=O(\kappa\log(r_0/\varepsilon))$, $T_{\mathrm{NM}}(\varepsilon)=O(\log(r_0/\varepsilon))$, and $T_{\mathrm{M}}(\varepsilon)=O(\kappa\log(r_0/\varepsilon))$.
\end{proof}

\section{One-Step Analysis of Newton--Muon}
\label{sec:numerical-sim}

The convergence analysis in Section~\ref{sec:single-spike-convergence} relies on a simple quadratic case study in order to obtain explicit global dynamics. In this section, we provide a complementary one-step analysis that can be carried out more generally through local approximation.
It has two parts: (i) we introduce a scale-invariant quadratic score that measures the best achievable one-step decrease along a chosen direction after an optimal line search; and (ii) we use a simple spiked activation model to obtain a concrete numerical study. The theoretical derivation for isotropic activations is deferred to Appendix~\ref{app:quadratic-score}.

\subsection{Quadratic Score}

We compare several update directions by their predicted one-step improvement under the surrogate \eqref{eq:triplet-obj-derived}. 
Note first that the objective \eqref{eq:triplet-obj-derived} is a quadratic function of the update
direction ${Q}$, and we only need to choose a direction ${Q}$ and then perform a one-dimensional line search along that direction. This reduces the comparison to a direction choice followed by one-dimensional line search.

Concretely, for a fixed direction ${Q}\in\R^{m\times n}$ and step size $\eta\in\R$, consider the update ${W}\mapsto {W}-\eta {Q}$.
Plugging $\eta {Q}$ into the quadratic surrogate \eqref{eq:triplet-obj-derived} yields the one-dimensional model
\[
  J(\eta {Q})
   = 
  - \eta \mathrm{tr}({Q}{G}^\top)
  + \frac{\eta^2}{2N} \mathrm{tr} \Big({H} {Q} ({Z}{Z}^\top) {Q}^\top\Big).
\]
Under Assumption~\ref{assump:exact-newton}, the optimal step size is $\eta^\star=\mathrm{tr}({Q}{G}^\top)/\mathrm{tr}\big({H}{Q}({Z}{Z}^\top/N){Q}^\top\big)$.
Thus, we define the score of a direction ${Q}$ to be proportional to the best one-step loss decrease under the surrogate
\begin{equation}
  \label{eq:score-matrix-form}
  s({Q})
  \coloneqq
  -2\big(J(\eta^\star {Q})-J(0)\big)
  =
  \frac{\mathrm{tr}({Q}{G}^\top)^2}{\mathrm{tr}\Big({H}{Q}({Z}{Z}^\top/N){Q}^\top\Big)}.
\end{equation}
This provides a general, scale-invariant criterion for comparing different update directions.

\subsection{Numerical Study with Spiked Activation}
We numerically evaluate the score $s({Q})$ in \eqref{eq:score-matrix-form} for the six directions under ${G}={H}({W}-W^\star)({Z}{Z}^\top/N)$. 
These directions are 
(i) ${Q}={G}$,
(ii) ${Q}_{\mathrm{Muon\text{-}SVD}}=\mathrm{msgn}({G})$ (exact SVD),
(iii) ${Q}_{\mathrm{Muon\text{-}NS}}$, obtained by applying five Newton--Schulz iterations to ${G}$, specifically, ${Q}_{\mathrm{Muon\text{-}NS}}={X}_5$, where ${X}_0={G}/\|{G}\|_F$ and ${X}_{t+1}=\big(3.4445{I}_m-4.7750{X}_t{X}_t^\top+2.0315({X}_t{X}_t^\top)^2\big){X}_t$ for $t=0,\dots,4$,
(iv) ${Q}_{\mathrm{Newton\text{-}Muon\text{-}SVD}}=\mathrm{msgn}\big({G}({Z}{Z}^\top)^{-1}\big)$ (exact SVD),
(v) ${Q}_{\mathrm{Newton\text{-}Muon\text{-}NS}}$, obtained by applying the same five Newton--Schulz iterations to ${G}({Z}{Z}^\top)^{-1}$,
and (vi) ${Q}_{\mathrm{Newton}}={H}^{-1}{G}({Z}{Z}^\top)^{-1}$, which is proportional to ${W}-W^\star$.
Throughout, we fix the square setting $m=n=512$, $\lambda_{\max}=1$, and $\lambda_{\min}=10^{-4}$. We generate the eigenvalues of ${H}$ by the stretched-exponential rule
\[
\lambda_k=\lambda_{\max}\exp\big(-\tau (k-1)^p\big),\qquad
\tau=\frac{\log(\lambda_{\max}/\lambda_{\min})}{(m-1)^p},
\]
so that $\lambda_m=\lambda_{\min}$, then sample a random orthogonal ${P}$ and set ${H}={P} \mathrm{diag}(\lambda_1,\ldots,\lambda_m) {P}^\top$. We sample ${W}-W^\star\in\R^{m\times n}$ with i.i.d.\ $\mathcal{N}(0,1)$ entries. For the activation matrix, we sample the columns $\boldsymbol{z}_i\in\R^n$ of ${Z}$ independently from $\mathcal{N}(0,\mathrm{diag}(\kappa,1,\ldots,1))$ with $\kappa=64$, so that the population activation second moment is spiked. We use three choices of $(N,p)$: the baseline case $(8192,0.3)$, a more top-uniform curvature case $(8192,2.4)$, and a smaller-sample case $(1024,0.3)$. For each setting, we run $1024$ independent simulations and report the mean score with the 2.5\%--97.5\% interval.

Figure~\ref{fig:sim-spiked-activation} shows substantially higher scores for Newton--Muon than for Muon when both activation anisotropy and curvature anisotropy are strong, with Newton--Muon being closer to the optimal Newton direction. Muon also substantially outperforms gradient descent when the curvature is anisotropic.

\begin{figure}[htb]
  \centering
  \includegraphics[width=\linewidth]{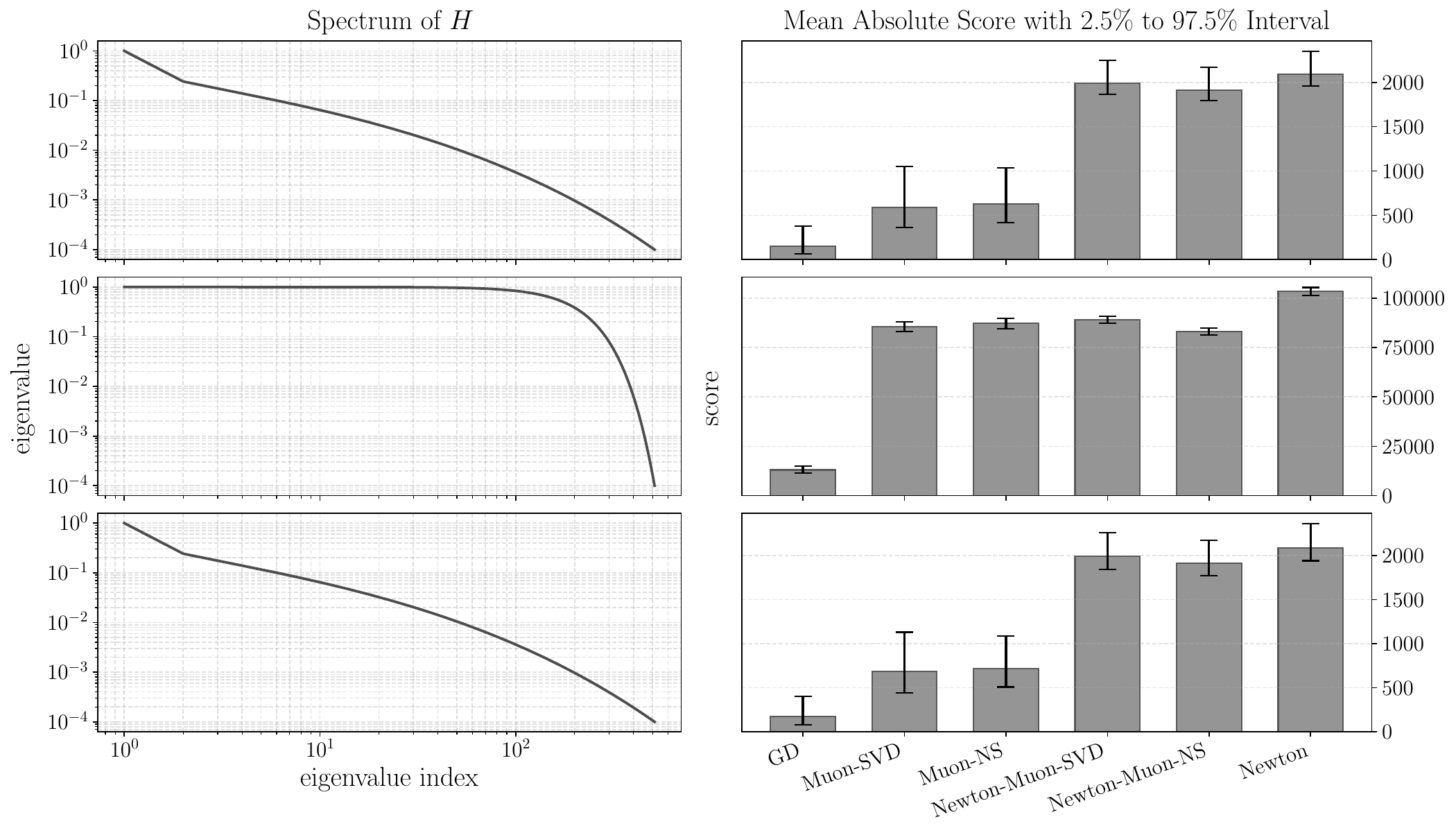}
  \caption{Numerical study with spiked activation second moment $\mathrm{diag}(\kappa,1,\ldots,1)$ and $\kappa=64$. Top: baseline case $(N,p)=(8192,0.3)$. Middle: more uniform curvature $(N,p)=(8192,2.4)$. Bottom: smaller sample size $(N,p)=(1024,0.3)$. The left column shows the spectrum of ${H}$, and the right column shows the corresponding mean absolute scores $s({Q})$.}
  \label{fig:sim-spiked-activation}
\end{figure}

\section{LLM Experiments}
\label{sec:real-llm-exp}

\subsection{Pretraining Benchmark Records on Modded-NanoGPT}
\label{subsec:llm-exp}

\paragraph{Benchmark setup.} We first compare Newton--Muon, Muon, and AdamW using historical records from the Modded-NanoGPT speedrun benchmark~\citep{modded_nanogpt_2024}. This benchmark is designed around a fixed objective: train a GPT-style language model on FineWeb~\citep{penedo2024the} and report the wall-clock time needed to reach a target validation loss under a fixed hardware configuration. The benchmark has two tracks: the short track targets a validation loss of 3.28, while the medium track lowers the target to 2.92. For reproducibility, several important implementation details are presented in Appendix~\ref{app:llm-details}.

\paragraph{Reference records.} In this benchmark, each Record~\#k refers to a historical leaderboard submission together with its released training configuration and runtime log. In our experiments, we use these records for reproduction and comparison. We use Record~\#4 from the short track as a baseline; this run was submitted shortly after Muon was introduced. To benchmark
against a stronger setup, for the short track we adapt a training script that is close in configuration to
Record~\#28. For the medium track, we compare against Record~\#17.

However, note that the later records are heavily optimized around the original Muon update, which makes them difficult to surpass without extensive hyperparameter tuning.
Since we only tune the learning rate and Newton--Muon hyperparameters, the baseline comparison should be viewed as the most direct comparison of Newton--Muon, while other comparisons should be interpreted as evaluations under training configurations that were heavily tuned for the original Muon update rather than for Newton--Muon.
Algorithm~\ref{alg:newton-muon} summarizes the implementation of Newton--Muon.

\begin{algorithm}[htb]
\caption{Newton--Muon}
\label{alg:newton-muon}
\DontPrintSemicolon

\KwIn{For each Muon layer $\ell$: input activations ${Z}_\ell\in\mathbb{R}^{n\times N}$, gradient ${G}_\ell\in\mathbb{R}^{m\times n}$, running second moment ${K}_\ell\in\mathbb{R}^{n\times n}$ (initialize with $10^{-3}{I}_n$), stored inverse ${K}_\ell^{-1}$.}
\KwIn{Hyperparameters: EWMA coefficient $\beta$, ridge scaling $\gamma$, refresh interval $k$.}

\BlankLine
\For{\texttt{step}$=0,1,\dots$}{
    \ForEach{$\ell$}{
        \If{$(\texttt{step}+1)\bmod k = 0$}{
            ${K}_\ell \gets \beta {K}_\ell + (1-\beta) {Z}_\ell {Z}_\ell^\top / N$\;
            \tcp*[r]{Compute via a symmetric rank-$N$ update (Appendix~\ref{app:symm})}
            $\gamma_\ell \gets \gamma\cdot \mathrm{tr}({K}_\ell)/n$\;
            ${K}_\ell^{-1} \gets ({K}_\ell + \gamma_\ell {I}_n)^{-1}$\;
            \tcp*[r]{Compute via Cholesky inverse (Appendix~\ref{app:chol-inverse}) or a polynomial iteration with a custom symmetric-output matmul (Appendix~\ref{app:poly-inv})}
        }
        ${G}_\ell \gets {G}_\ell {K}_\ell^{-1}$\;
        \tcp*[r]{${K}_\ell^{-1}$ is applied to the raw layer gradient, before momentum, weight decay, and the remaining Muon pipeline}
        Apply the standard Muon update using the right-preconditioned gradient ${G}_\ell$\;
    }
}
\end{algorithm}

\paragraph{Baseline.}
We begin with the short track Record~\#4 with a single NVIDIA H100 GPU. This run trains a 124M GPT-2 architecture with 3.25B training tokens. The Muon baseline
learning rate is $0.0036$. We modified the code to run on a single GPU without touching any training pipeline. For Newton--Muon, we use learning
rate $0.0040$, EWMA coefficient $\beta=0.95$, ridge scaling $\gamma=0.2$, and refresh interval $k=32$. For AdamW, we assign the transformer block parameters a reduced AdamW learning rate $0.000576$, selected from a learning-rate sweep. All wall-clock numbers reported below are measured in our environment under this configuration. 
Table~\ref{tab:record4-summary} summarizes the final validation losses and total training times. Figure~\ref{fig:record4-baseline-plot} compares the validation loss trajectories.

\begin{table}[htb]
  \centering
  \begin{tabular}{l r r}
    \hline
    Method & Loss & Time (s) \\
    \hline
    AdamW & 3.3801 & 7228.4 \\
    Muon & 3.2793 & 7314.1 \\
    Newton--Muon & 3.2611 & 7443.3 \\
    \hline
  \end{tabular}
  \caption{Short track Record~\#4 setting (single H100).}
  \label{tab:record4-summary}
\end{table}

\paragraph{Ablation for baseline.}
Unless otherwise stated, we fix the Newton--Muon settings to learning rate $0.0040$, EWMA coefficient $\beta=0.95$, ridge scaling
$\gamma=0.2$, and refresh interval $k=32$, and vary only the ablation parameters. Figure~\ref{fig:ablation-refresh}
shows a two-dimensional sweep over refresh interval $k$ and EWMA coefficient $\beta$.
Figure~\ref{fig:ablation-ridge-lr} sweeps ridge scaling $\gamma$ and the learning rate.

\begin{figure}[htb]
  \centering
  \includegraphics[width=0.9\linewidth]{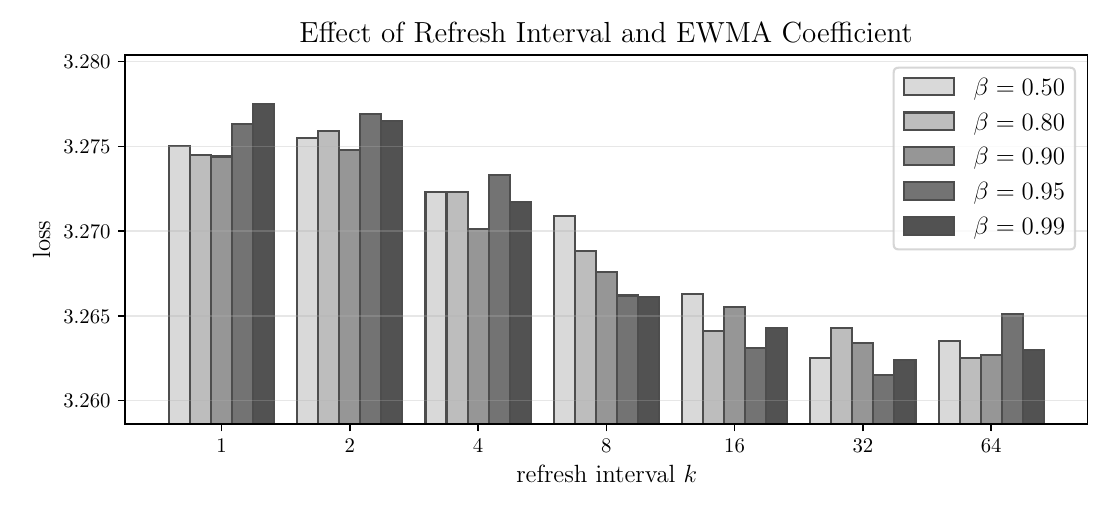}
  \caption{Refresh ablation for Newton--Muon on short track Record~\#4. Grouped bar plot over refresh interval $k$, with one bar
  per EWMA coefficient $\beta$ (ridge scaling fixed at $\gamma=0.2$ and learning rate fixed at $0.0040$).}
  \label{fig:ablation-refresh}
\end{figure}

\begin{figure}[htb]
  \centering
  \includegraphics[width=1\linewidth]{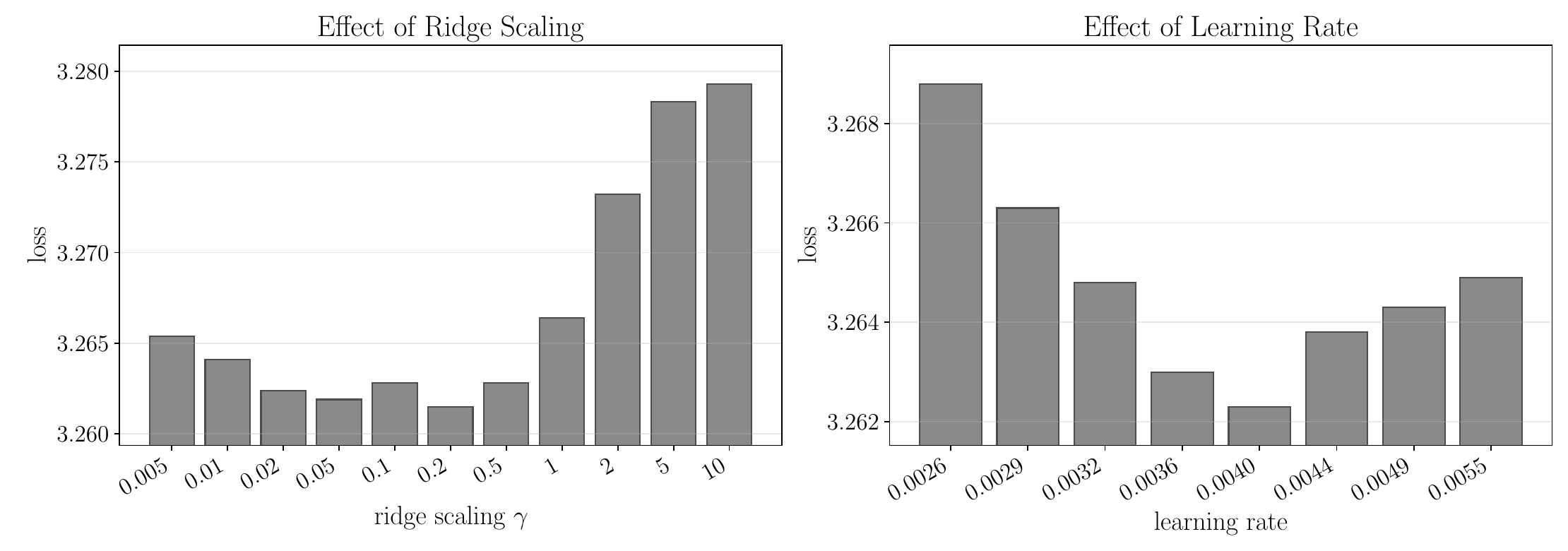}
  \caption{Left: Ridge-scaling ablation for Newton--Muon on short track Record~\#4. Bar plot over ridge scaling $\gamma$ with
  $k=32$, $\beta=0.95$, and learning rate $0.0040$ fixed. Right: Learning-rate ablation for Newton--Muon on short track Record~\#4. Bar plot over learning rate with
  $k=32$, $\beta=0.95$, and $\gamma=0.2$ fixed.}
  \label{fig:ablation-ridge-lr}
\end{figure}

These ablations suggest three patterns. First, Figure~\ref{fig:ablation-refresh} shows that very frequent refreshes ($k\in\{1,2\}$) consistently underperform, while
moderate refresh intervals ($k\in\{16,32,64\}$) yield the best final losses. 
Second, ridge scaling is essential to make the right preconditioner numerically well behaved. Figure~\ref{fig:ablation-ridge-lr} indicates that overly small ridge values can degrade
training slightly since the right-preconditioner may be ill-conditioned. In contrast, a wide mid-range (roughly
$\gamma\in[0.02,0.5]$ here) performs similarly. Very large ridge values eventually move the update back toward
standard Muon, which corresponds to the loss rising toward the baseline 3.2793 as $\gamma$ increases.
Third, the learning-rate sweep in Figure~\ref{fig:ablation-ridge-lr} is relatively flat around the best region, and the best Newton--Muon learning rate is very close to the best Muon learning rate.

\paragraph{Short Track Record~\#28.}
We reproduce a short track configuration similar to a record submitted around the same time as Record~\#28 on a single NVIDIA L40S GPU. This run trains a $\sim$275M-parameter model with about $670$M training tokens. We use the training script at
\url{https://github.com/KellerJordan/modded-nanogpt/blob/9d9dc969c451c87b7ad3c84f807db2c2d9109f41/train_gpt.py}. For Newton--Muon, we do not change any other configurations including the learning rate. We use EWMA coefficient $\beta=0.8$, ridge scaling $\gamma=0.2$, and refresh interval $k=16$; since
the run is shorter than Record~\#4, we refresh more aggressively. 
For AdamW, we assign the transformer block matrices a reduced AdamW learning rate $0.00055$.
Because this setting is noisy, we run experiments four times and summarize results in Table~\ref{tab:record28-summary}. Figure~\ref{fig:record28-plot} compares the validation-loss curves of
the second-best run (by final validation loss) from each group. Although Newton--Muon has slightly higher total runtime due to preconditioning overhead (Table~\ref{tab:record28-summary}), its loss-versus-time curve indicates a small advantage in time to reach comparable validation loss (Figure~\ref{fig:record28-plot}).

\begin{table}[htb]
  \centering
  \begin{tabular}{l r r r r r r}
    \hline
    Method & Run 1 & Run 2 & Run 3 & Run 4 & Avg.\ loss & Avg.\ time (s) \\
    \hline
    AdamW & 3.4677 & 3.4631 & 3.4640 & 3.4566 & 3.4628 & 4272.0 \\
    Muon & 3.2758 & 3.2777 & 3.2783 & 3.2830 & 3.2787 & 4305.9 \\
    Newton--Muon & 3.2733 & 3.2736 & 3.2740 & 3.2745 & 3.2739 & 4342.4 \\
    \hline
  \end{tabular}
  \caption{Short track Record~\#28 setting (single L40S).}
  \label{tab:record28-summary}
\end{table}

\begin{figure}[htb]
  \centering
  \includegraphics[width=\linewidth]{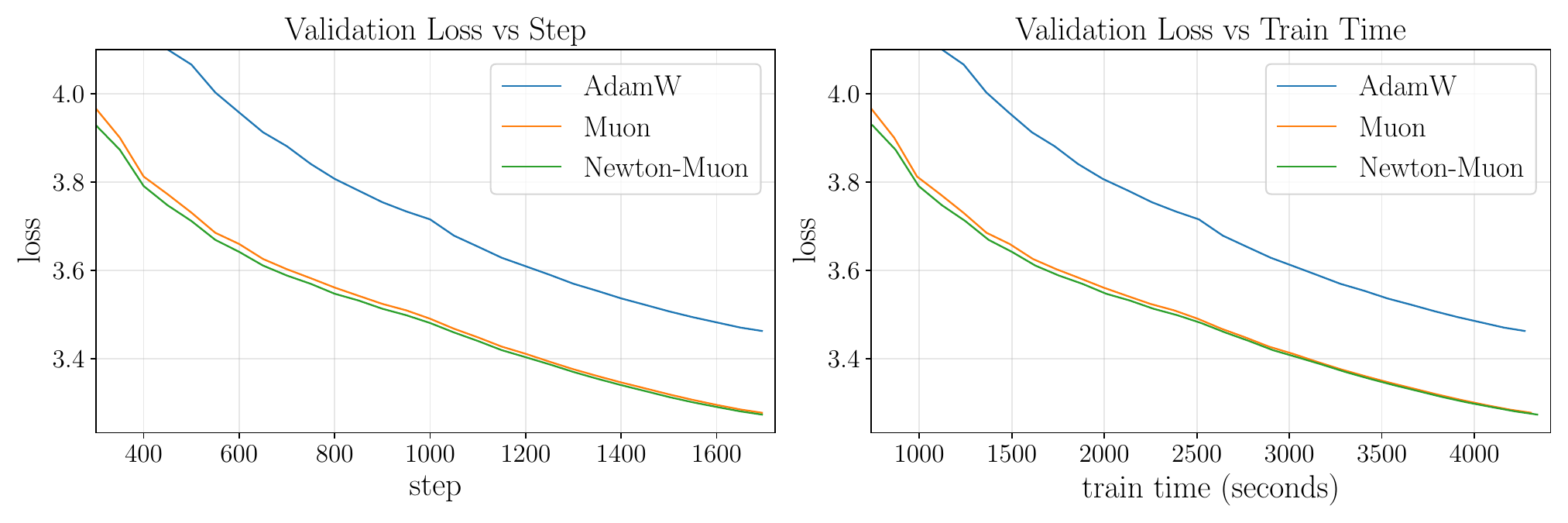}
  \caption{Validation loss trajectories for the short track Record~\#28 setting (single L40S). We plot the second-best run for AdamW, Muon, and Newton--Muon, and show loss versus step (left) and loss versus wall-clock time (right).}
  \label{fig:record28-plot}
\end{figure}

\paragraph{Medium Track Record~\#17.}
We next reproduce the medium track Record~\#17 setting. This run trains a $\sim$455M-parameter model with about $3.12$B training tokens. Similarly, for Newton--Muon we keep other configurations fixed. We use EWMA coefficient $\beta=0.9$, ridge scaling $\gamma=0.2$, and refresh interval $k=24$. We run both Muon and Newton--Muon three times and summarize results in Table~\ref{tab:record17-summary}. AdamW is not included for the medium track comparison. The improvement in final validation loss is only marginal in this setting.

\begin{table}[htb]
  \centering
  \begin{tabular}{l r r r r}
    \hline
    Method & Run 1 & Run 2 & Run 3 & Avg.\ loss \\
    \hline
    Muon   & 2.9190 & 2.9208 & 2.9190 & 2.9196 \\
    Newton--Muon & 2.9175 & 2.9191 & 2.9181 & 2.9183 \\
    \hline
  \end{tabular}
    \caption{Medium track Record~\#17 setting. Wall-clock time is not reported since we did not observe a substantial improvement.}
  \label{tab:record17-summary}
\end{table}

\subsection{Quadratic Score}
\label{subsec:real-score}
We evaluate candidate directions using the same one-dimensional surrogate, but with the exact parameter-space Hessian in the denominator. Given a parameter matrix ${W}$ and a candidate direction ${Q}$, we define the score
\begin{equation}
\label{eq:score-quad}
s({Q}) = \frac{\mathrm{tr}({G}^\top {Q})^2}{\mathrm{vec}({Q})^\top {\mathcal{H}}_{{W}} \mathrm{vec}({Q})}.
\end{equation}
When $\mathrm{vec}({Q})^\top {\mathcal{H}}_{{W}} \mathrm{vec}({Q})>0$, this quantity is, up to a constant factor, the predicted best decrease of the local quadratic model after an optimal line search along ${Q}$.

We compute the curvature term $\mathrm{vec}({Q})^\top {\mathcal{H}}_{{W}} \mathrm{vec}({Q})$ using exact Hessian-vector products for the batch loss, without any Kronecker approximation.
To make scores comparable across methods and layers, we normalize each layer's raw gradient to unit Frobenius norm
$\widehat{{G}} \coloneqq {G} / \|{G}\|_F$.
Each candidate direction is also normalized
$\widehat{{Q}} \coloneqq {Q} / \|{Q}\|_F$.
We then report three quantities. First, the alignment term
$\mathrm{tr}(\widehat{{G}}^\top \widehat{{Q}})$.
Second, the curvature term
$(1/2)\mathrm{vec}(\widehat{{Q}})^\top {\mathcal{H}}_{{W}} \mathrm{vec}(\widehat{{Q}})$.
Third, the resulting score
$s(\widehat{{Q}})$.

\paragraph{Experimental setup.}
We use Pythia-70M~\citep{biderman2023pythia} and evaluate two checkpoints, step 1000 and step 50000. We stream the Pile~\citep{gao2020pile} and form a large batch of $2048$ sequences of length $1024$, totaling $1024 \times 16 \times 128 = 2{,}097{,}152$ tokens. We study four weight matrices in the fourth transformer block: the attention output projection, the attention QKV projection, the MLP expansion, and the MLP contraction. To stabilize computation, we replace ${Z}{Z}^\top$ by the damped matrix ${K}_\gamma \coloneqq {Z}{Z}^\top + \gamma {I}_n$ with $\gamma > 0$, starting from a small damping value and multiplying $\gamma$ by $10$ until ${K}_\gamma$ admits a stable \texttt{float64} Cholesky factorization.

We compare five directions per layer. The gradient descent direction is ${Q} = \widehat{{G}}$. Muon-NS5 and Muon-NS32 apply 5 and 32 Newton--Schulz iterations to $\widehat{{G}}$, respectively. Newton--Muon-NS5 and Newton--Muon-NS32 apply 5 and 32 Newton--Schulz steps to the right-preconditioned gradient.

\paragraph{Results.}
\label{subsec:real-results}

Figure~\ref{fig:real-quadratic-score-combined} summarizes the results. For these four matrices at the two sampled checkpoints, the ranking is consistent. Newton--Muon achieves the highest score, Muon is intermediate, and the raw gradient is worst. By construction, the gradient direction has the largest alignment with itself, but it also typically has a large curvature term. Both Muon and Newton--Muon substantially reduce this curvature term. Relative to Muon, Newton--Muon usually reduces the curvature term more than it reduces the alignment term, which explains its higher score. The advantage of Newton--Muon over Muon is smaller at step 50000 than at step 1000, and this trend appears across all four matrices. Also, NS5 and NS32 are close, suggesting that a small number of Newton--Schulz steps often suffices.

Table~\ref{tab:real-K-stats} reports diagnostics of the input activation second moment computed on the same batch. The matrices are strongly anisotropic, with large diagonal spread, substantial off-diagonal mass, and very large condition numbers.

\begin{table}[htb]
  \centering
  \small
  \setlength{\tabcolsep}{4pt}
  \renewcommand{\arraystretch}{1.12}
  \begin{tabular}{l r r r r r}
    \hline
    Module & $n$ &
    $\kappa({Z}{Z}^\top)$ &
    $d_{\max}/d_{\min}$ &
    $\bar{o}/d_{\mathrm{mean}}$ &
    $\gamma/d_{\mathrm{mean}}$ \\
    \hline
    Attn out   &  512 & $9.18\times 10^{3}$ & $2.84\times 10^{1}$ & $9.89\times 10^{1}$ & $1.0\times 10^{-6}$ \\
    Attn QKV   &  512 & $2.04\times 10^{6}$ & $3.00\times 10^{0}$ & $6.59\times 10^{1}$ & $1.0\times 10^{-6}$ \\
    MLP expansion &  512 & $\infty$            & $3.01\times 10^{0}$ & $6.56\times 10^{1}$ & $1.0\times 10^{-4}$ \\
    MLP contraction  & 2048 & --                  & $7.23\times 10^{1}$ & $1.88\times 10^{2}$ & $1.0\times 10^{-6}$ \\
    \hline
    Attn out   &  512 & $2.34\times 10^{3}$ & $3.02\times 10^{1}$ & $7.21\times 10^{1}$ & $1.0\times 10^{-6}$ \\
    Attn QKV   &  512 & $9.77\times 10^{4}$ & $1.66\times 10^{2}$ & $4.22\times 10^{1}$ & $1.0\times 10^{-6}$ \\
    MLP expansion &  512 & $8.22\times 10^{6}$ & $1.21\times 10^{2}$ & $3.97\times 10^{1}$ & $1.0\times 10^{-6}$ \\
    MLP contraction  & 2048 & --                  & $7.46\times 10^{2}$ & $1.88\times 10^{2}$ & $1.0\times 10^{-6}$ \\
    \hline
  \end{tabular}
  \caption{Diagnostics of the activation second moment ${Z}{Z}^\top$ for four modules at step 1000 (top block) and step 50000 (bottom block). Here $d_{\min}$, $d_{\mathrm{mean}}$, and $d_{\max}$ are the minimum, mean, and maximum diagonal entries of ${Z}{Z}^\top$, $\bar{o}=(1/n)\sum_i\sum_{j\neq i}|({Z}{Z}^\top)_{ij}|$, and $\kappa({Z}{Z}^\top)$ is the condition number of ${Z}{Z}^\top$. The value $\kappa({Z}{Z}^\top)$ is reported when eigenvalues were explicitly computed; ``$\infty$'' indicates numerical instability in the eigenspectrum estimate (a tiny negative eigenvalue at step 1000), and ``--'' indicates that the spectrum was not explicitly computed at $n=2048$. The last column reports the relative damping level selected by the adaptive Cholesky procedure.}
  \label{tab:real-K-stats}
\end{table}

\begin{figure}[htbp]
  \centering

  \begin{minipage}{1.00\linewidth}
    \centering
    \includegraphics[width=\linewidth]{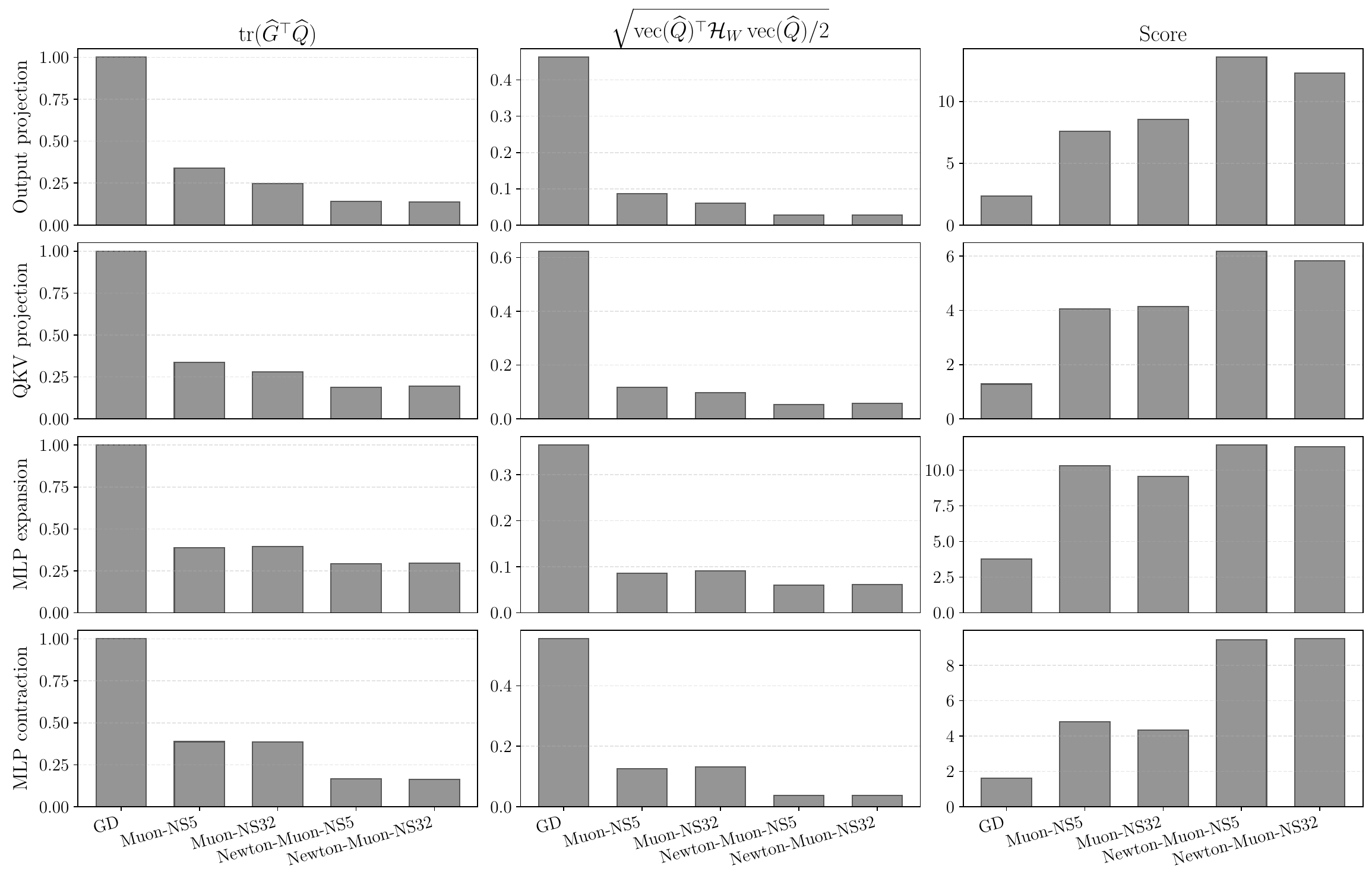}
  \end{minipage}

  \vspace{0.8em}

  \begin{minipage}{1.00\linewidth}
    \centering
    \includegraphics[width=\linewidth]{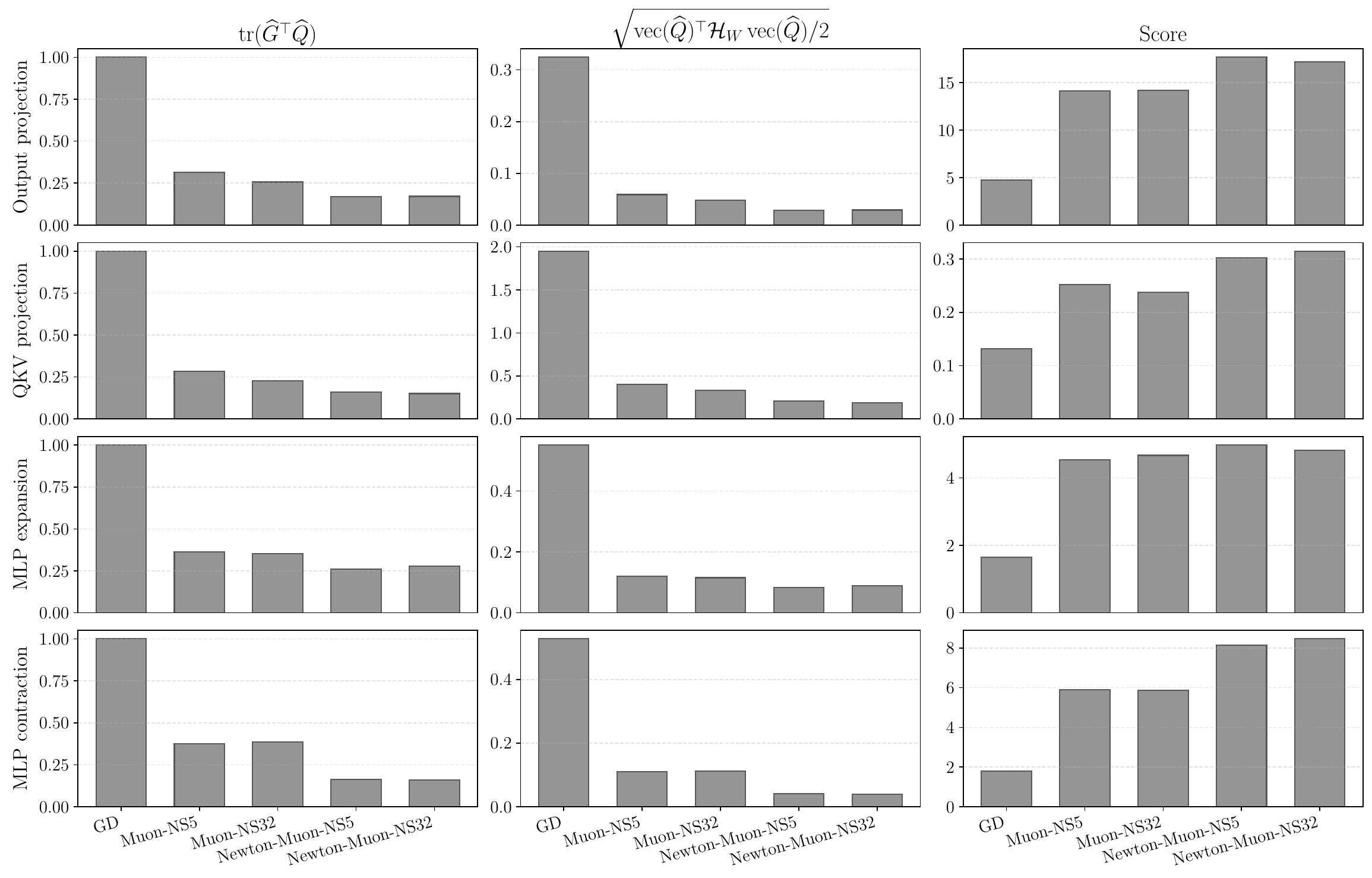}
  \end{minipage}

  \caption{Quadratic score comparison on four layer matrices from Pythia at checkpoints step 1000 (top) and step 50000 (bottom), corresponding to the one-step analysis in~\eqref{eq:score-quad}.}
  \label{fig:real-quadratic-score-combined}
\end{figure}

\section{Discussion}
\label{sec:conclusion}

In this work, we introduce a triplet quadratic surrogate model that offers a local second-order view of Muon and leads to a new optimizer, Newton--Muon. Under an isotropic proxy, one-step minimization yields the update $\mathrm{msgn}(G(ZZ^\top)^{-1})$, suggesting that Muon is an implicit Newton method without right preconditioning by the input second moment. Empirically, on our reproductions of historical Modded-NanoGPT speedrun benchmark configurations, Newton--Muon reaches the target validation loss in 6\% fewer steps and reduces wall-clock time to that loss by over 4\% relative to the Muon baseline. However, several limitations of the current framework remain and point to important directions for future work.

First, Newton--Muon relies on the isotropic proxy ${\Sigma}_W \propto {I}_m$. This choice is simple and robust, but it discards potentially useful information about the displacement distribution and may become inaccurate later in training. It is therefore better viewed as a practical proxy than as a faithful model throughout optimization. Indeed, weight matrices in classification models often exhibit low dimensional geometric structure, as suggested by neural collapse~\citep{papyan2020prevalence} and minority collapse~\citep{fang2021exploring}; related phenomena, including linguistic collapse~\citep{wu2024linguistic} and cluster formation in attention dynamics~\citep{geshkovski2023emergence}, have also been observed in transformers. A natural direction for future work is to estimate ${\Sigma}_{{W}}$ from training dynamics. With a sufficiently accurate estimate of ${\Sigma}_{{W}}$, one need not explicitly compute its SVD or symmetric square root in order to apply \eqref{eq:SigmaW-polar-update3}; it suffices to obtain a factorization ${\Sigma}_{{W}} = {M}{M}^\top$, and then use ${M}$ to implement the same update direction. More details on this factorized implementation are deferred to Appendix~\ref{app:llm-exp-sigmaW}. However, our preliminary experiments in Appendix~\ref{app:llm-exp-sigmaW} with non-identity proxies, such as diagonal and factorized forms, did not improve performance, suggesting that estimating ${\Sigma}_{{W}}$ reliably is nontrivial and may introduce feedback bias. Developing stable, cheap estimators for non-isotropic proxies remains an important open problem.

Second, the Kronecker approximation of the parameter-space Hessian omits explicit token coupling in transformer attention mechanisms. In general, for token indices $t \neq s$, one can have $H_{ts} = \partial^2 L / (\partial \boldsymbol{y}_t \partial \boldsymbol{y}_s) \neq 0$, but these cross-token curvature terms are discarded when passing from the exact token-coupled formula~\eqref{eq:HW-transformer-exact} to the averaged Kronecker approximation~\eqref{eq:HW-transformer-kron}. We also use a single shared curvature matrix $H$ across all samples and positions, whereas the local output space curvature may vary with sample or token. As a result, the approximation $\mathcal{H}_W \approx (ZZ^\top/N)\otimes H$ may become less accurate in late training, when the Hessian can deviate further from Kronecker structure. These may help explain why Muon's advantage decays in very late training~\citep{wen2025fantastic}. Future work should develop tractable approximations that incorporate token-coupled curvature and relax the shared $H$ assumption.

Third, our experiments computed the activation second moment inverse using a damped Cholesky solve of ${K}_\gamma = {Z}{Z}^\top + \gamma I_n$. In most cases we used the full inverse of ${K}_\gamma$, but for MLP contraction matrices of shape $d \times (4d)$ we instead used a block-diagonal approximation with four $d \times d$ blocks, which performed well in practice (see Appendix~\ref{app:llm-details}). This suggests that structured approximations to ${K}_\gamma$ may already suffice. Therefore, future work could study structured approximations such as block matrices or low-rank factorizations, which may substantially reduce memory and computation while preserving most of the benefit of Newton--Muon. Future work could also compute the activation second-moment inverse via polynomial iteration (Appendix~\ref{app:poly-inv}), which may be faster. It is also natural to replace our heuristic damping $\gamma \propto \mathrm{tr}(ZZ^\top)/n$ by a shifted Cholesky rule~\citep{fukaya2020shifted}, for example $\gamma \propto \|ZZ^\top\|_2$ or $\gamma \propto \|ZZ^\top\|_F/\sqrt{n}$. 

Fourth, future work should evaluate Newton--Muon in distributed training, since we only test it on a single GPU. In particular, it remains important to understand how to compute and invert the activation second moment ${Z}{Z}^\top$ in multi-GPU training. This includes both efficient implementations and approximations that preserve most of the benefit of Newton--Muon. Studying these systems issues at scale is an important direction for future work.

\subsection*{Acknowledgments}
This research was supported in part by the Wharton AI fund.

\bibliographystyle{plainnat}
\bibliography{ref}

\clearpage
\appendix 
\appendixpage
\addappheadtotoc 

\section{LLM Experimental Details}
\label{app:llm-details}
This appendix collects the implementation and benchmarking details for the LLM experiments in Section~\ref{subsec:llm-exp}, including several details that are important for reproducing the reported results.

\paragraph{Comparison protocol.}
For a fair comparison, we use several publicly logged benchmark records as configuration references and report wall-clock results from our reproductions. To keep the comparison controlled, for each selected record we modify only the optimizer and tune learning rates when needed. For Newton--Muon, we replace the Muon update with Newton--Muon. For AdamW, to keep the comparison controlled, we replace the Muon-optimized parameters by AdamW, set $\beta=(0.9,0.999)$, and tune the AdamW learning rate over a wide range, reporting the setting that achieves the lowest validation loss, while keeping the remaining training settings unchanged. All other training settings are kept identical to the original record; we may make implementation-level changes (e.g., custom kernels) only to improve efficiency without altering the underlying training pipeline, and we apply any such changes uniformly across methods within the same comparison. For each selected record, we reproduce the run in our own environment and report wall-clock time from these reproductions. We do not claim that our measured times match the leaderboard submission times; our focus is the within-environment wall-clock difference among Newton--Muon, Muon, and AdamW under otherwise identical training settings.

\paragraph{Layerwise second moment computation.}
For the attention QKV projection and the MLP expansion, the input dimension is \(d\), so we form a single \(d\times d\)
activation second moment matrix from their inputs. For the attention output projection, the input dimension is also \(d\),
but the activations are taken from the attention output immediately before applying the output projection.
For the MLP contraction, the input dimension is \(4d\). To avoid forming a full \((4d)\times(4d)\) second moment, we use a
block-diagonal approximation: reshape each activation \(\boldsymbol{z}\in\mathbb{R}^{4d}\) into four
contiguous blocks \(\boldsymbol{z}=[\boldsymbol{z}^{(1)};\ldots;\boldsymbol{z}^{(4)}]\) with
\(\boldsymbol{z}^{(b)}\in\mathbb{R}^{d}\), and form four separate \(d\times d\) second moments
\({Z}^{(b)}{{Z}^{(b)}}^\top\). The corresponding inverse is represented as four independent \(d\times d\)
inverses, and applying it is done by splitting the MLP contraction gradient into four \(d\times d\) blocks along the input
dimension and right-multiplying each block by its matching inverse.

\paragraph{Numerical precision.}
There are also a few important implementation details regarding numerical precision. First, the activation second moment update
${Z}_\ell{Z}_\ell^\top$ is relatively well behaved, and in practice the accumulation can be done with \texttt{bfloat16}. However, the inverse computation is much more sensitive. We compute 
${K}_\ell^{-1}=({K}_\ell+\gamma_\ell{I}_n)^{-1}$ using a \texttt{float32} Cholesky factorization followed by a Cholesky inverse (see Appendix~\ref{app:chol-inverse}). 
Second, when applying the right-preconditioner, the matrix multiply ${G}_\ell {K}_\ell^{-1}$ should also be performed in
\texttt{float32}. If the gradients are stored in \texttt{bfloat16}, we can upcast ${G}_\ell$ to \texttt{float32} before the multiplication.

\section{Optimizing Computation}
\label{app:opt-compute}

\subsection{Symmetric Matrix Multiplication}
\label{app:symm}

Our implementation exploits the fact that the activation second moment
\({Z}{Z}^\top\) (and any polynomial \(p({Z}{Z}^\top)\)) is symmetric. This
structure allows us to avoid redundant computation by computing only one triangle and reconstructing the other by
symmetry.

\paragraph{SYRK (symmetric rank-\(k\) update).}
Forming the second moment
\[
{K}  \gets  {Z}{Z}^\top
\]
is exactly a SYRK pattern: it suffices to compute either the lower or upper triangle of \({K}\) and fill the
other half by mirroring across the diagonal.

\paragraph{SYPP (Symmetric polynomial product).}
In the polynomial inverse, we repeatedly multiply factors that are polynomials in \({Z}{Z}^\top\).
Since \({Z}{Z}^\top\) is symmetric, any polynomial \(p({Z}{Z}^\top)\) is also
symmetric. Moreover, because these factors are functions of the same matrix, they commute, so products such as
\(p({Z}{Z}^\top) q({Z}{Z}^\top)\) remain symmetric. Consequently, we can compute
only one triangle of the product and write the mirrored entries to complete the matrix, reducing nearly half of the work
compared to a dense matrix multiplication.

PyTorch~\citep{paszke2019pytorch} does not expose a simple interface for enforcing this kind of triangular compute. Instead, we implement Triton~\citep{tillet2019triton} custom kernels, adapted from
the Modded-NanoGPT repository~\citep{modded_nanogpt_2024}.

\subsection{Cholesky Inverse}
\label{app:chol-inverse}

When ${K}_\gamma={Z}{Z}^\top+\gamma{I}_n\succ 0$, Cholesky factorization gives
${K}_\gamma={L}{L}^\top$ with ${L}$ lower triangular. We then explicitly form the
inverse ${K}_\gamma^{-1}=({L}{L}^\top)^{-1}={L}^{-\top}{L}^{-1}$.
We use \texttt{torch.linalg.cholesky\_ex} followed by \texttt{torch.cholesky\_inverse} in \texttt{float32}.

\subsection{Polynomial Iteration Inverse}
\label{app:poly-inv}

We also consider a polynomial iteration that explicitly constructs an approximation to
${K}_\gamma^{-1}$ using only SYPP. When ${K}_\gamma$ has moderate condition number after damping, this
approach can be cheaper than Cholesky.

\paragraph{Principle.}
Let ${K}_\gamma\succ 0$ and choose a scalar $\alpha>0$ such that the scaled matrix
\[
\widetilde{{K}}  \coloneqq  \alpha {K}_\gamma
\]
has spectrum contained in $(\varepsilon, 1]$, i.e.,
\[
\varepsilon  <  \lambda_{\min}(\widetilde{{K}})  \le  \lambda_{\max}(\widetilde{{K}})  \le  1
\]
for a small safety margin $\varepsilon>0$. Define the residual matrix
\[
{R}_0  \coloneqq  {I}_n-\widetilde{{K}}.
\]
With this scaling, the spectrum of ${R}_0$ lies in $[0, 1-\varepsilon]$. Since
$\widetilde{{K}}={I}_n-{R}_0$,
\[
{K}_\gamma^{-1} = \alpha \widetilde{{K}}^{-1}
 = \alpha ({I}_n-{R}_0)^{-1}.
\]
The goal is therefore to build an explicit approximation ${X}\approx({I}_n-{R}_0)^{-1}$.
We initialize the inverse estimate as
\[
{X}_0 = {I}_n.
\]
At step $k=1,\dots,T$, we pick a polynomial $q_k(\cdot)$ and apply it to the current residual matrix
\[
{Q}_k  \coloneqq  q_k({R}_{k-1}),
\qquad
{X}_k \coloneqq {X}_{k-1} {Q}_k.
\]
This induces a residual update. Using ${R}_{k-1}={I}_n-\widetilde{{K}}{X}_{k-1}$, we have
$\widetilde{{K}}{X}_{k-1}={I}_n-{R}_{k-1}$, hence
\[
{R}_k
 = 
{I}_n-\widetilde{{K}}{X}_k
 = 
{I}_n-\widetilde{{K}}{X}_{k-1} {Q}_k
 = 
{I}_n-({I}_n-{R}_{k-1}) q_k({R}_{k-1})
 = 
\phi_k({R}_{k-1}),
\]
where the induced scalar map on eigenvalues is
\[
r^{+} = \phi_k(r) \coloneqq 1-(1-r) q_k(r).
\]
Crucially, the admissible interval for the residual is iteration dependent. If ${R}_{k-1}$ has spectrum contained
in an interval $r\in\mathcal{I}_{k-1}$ ($\mathcal{I}_0=[0, 1-\varepsilon]$), then the next spectrum is contained in
\[
r^{+}\in \phi_k(\mathcal{I}_{k-1}),
\]
and the interval shrinks as $k$ grows. As the residual contracts toward zero, the effective design interval becomes much
smaller than $[0, 1-\varepsilon]$, which allows later polynomials to be optimized for a tighter range and achieve stronger
contraction per SYPP.

\paragraph{Convergence guarantee.}
Define $s_0\coloneqq 1-\varepsilon$, so that the initial residual interval is $\mathcal{I}_0=[0,s_0]$. If the first polynomial $q_1$ is chosen so that
\[
\sup_{r\in \mathcal{I}_0} |\phi_1(r)|  \le  s_1  <  s_0,
\]
and for each later step $k\ge 2$ the polynomial $q_k$ is chosen so that
\[
\sup_{|r|\le s_{k-1}} |\phi_k(r)|  \le  s_k  <  s_{k-1},
\]
then $\|{R}_k\|_2\le s_k$ and the residual interval shrinks monotonically to $0$. Repeating a short
sequence of such contractive steps yields ${R}_T$ close to zero and ${X}_T$ as an explicit
approximation to $({I}_n-{R}_0)^{-1}=\widetilde{{K}}^{-1}$. Finally,
\[
{K}_\gamma^{-1} \approx \alpha {X}_T.
\]

\paragraph{How the polynomials are chosen.}
At the first step we choose $q_1$ to minimize $s_1=\sup_{r\in\mathcal{I}_0}|\phi_1(r)|$ on the one-sided interval $\mathcal{I}_0=[0,s_0]$. For each later step $k\ge 2$, we take the current symmetric spectral bound $s_{k-1}$ and choose $q_k$ to minimize
the next bound $s_k=\sup_{|r|\le s_{k-1}}|\phi_k(r)|$ under coefficient-magnitude constraints and an explicit robustness margin
that accounts for finite precision and modeling error. We implement this minimax design on a grid (via a linear
program) and then chain multiple steps under a total SYPP budget (via dynamic programming). The first step is designed on a
one-sided interval, and subsequent steps are designed on the symmetric interval to remain stable once roundoff introduces
small negative eigenvalues.

\paragraph{Numerical safeguards for precision.}
Two hyperparameters in the LP directly control how conservative the bound update $s_{k-1}\mapsto s_k$ is.
First, \texttt{INTERVAL\_PAD\_REL} enlarges the design interval. For the first step, instead of optimizing $\phi_1$ only on $\mathcal{I}_0=[0,s_0]$, the solver designs on
\[
\mathcal{I}_0^{\mathrm{design}}=[0,(1+\texttt{INTERVAL\_PAD\_REL})s_0].
\]
For each later step $k\ge 2$, instead of optimizing $\phi_k$ only on the current spectral
bound $|r|\le s_{k-1}$, the solver designs on $|r|\le s_k^{\mathrm{design}}$ with
\[
s_k^{\mathrm{design}}  =  (1+\texttt{INTERVAL\_PAD\_REL}) s_{k-1}.
\]
This guards against underestimating the current residual bound by enforcing contraction on a slightly wider interval than
the nominal bound.
Second, \texttt{NOISE\_ABS} is a worst-case absolute perturbation radius for the scalar residual map. Concretely, the LP is
solved under the robust requirement that for all $r$ in the design interval and for all perturbations
$\delta\in[-\texttt{NOISE\_ABS}, \texttt{NOISE\_ABS}]$, the perturbed update remains bounded
\[
\bigl|\phi_k(r)+\delta\bigr|  \le  \gamma_k.
\]
Equivalently, the solver enforces $|\phi_k(r)|\le \gamma_k-\texttt{NOISE\_ABS}$ on the grid, and then reports the robust bound
as $s_k=\gamma_k=\sup|\phi_k(r)|+\texttt{NOISE\_ABS}$. 
The algorithm is shown in Algorithm~\ref{alg:poly-right-precond}.

\begin{algorithm}[htb]
\caption{Inverse of ${K}_\gamma={K}+\gamma{I}_n$ via polynomial approximation}
\label{alg:poly-right-precond}
\DontPrintSemicolon

\KwIn{positive semidefinite matrix ${K}\in\mathbb{R}^{n\times n}$, damping $\gamma>0$, SYPP budget $B$.}
\KwOut{Explicit approximation $\widetilde{{K}_\gamma^{-1}}\approx({K}+\gamma{I}_n)^{-1}$.}

${K}_\gamma \gets {K}+\gamma{I}_n$\;

Estimate any upper bound of the largest eigenvalue $\overline{\lambda}_{\max}({K})\ge \lambda_{\max}({K})$\;

$\overline{\lambda}_{\max}({K}_\gamma)\gets \overline{\lambda}_{\max}({K})+\gamma$\;

$\alpha \gets 1/\overline{\lambda}_{\max}({K}_\gamma)$,\quad
$\widetilde{{K}} \gets \alpha {K}_\gamma$,\quad
$\bar{\varepsilon} \gets \alpha \gamma$,\quad
${X}_0 \gets {I}_n$,\quad
${R}_0 \gets {I}_n-\widetilde{{K}}$\;

Choose a plan $(q_1,\dots,q_T)$ from Table~\ref{tab:poly-hp-plans} with tabulated $\varepsilon\le \bar{\varepsilon}$ and total SYPP cost $\le B$\;

\For{$k\gets 1$ \KwTo $T-1$}{
  ${Q}_k \gets q_k({R}_{k-1})$\;
  ${X}_k \gets {X}_{k-1} {Q}_k$\;
  ${R}_k \gets {I}_n-({I}_n-{R}_{k-1}) {Q}_k$\;
}

${Q}_T \gets q_T({R}_{T-1})$\;
${X}_T \gets {X}_{T-1} {Q}_T$\;

\Return $\widetilde{{K}_\gamma^{-1}} \gets \alpha {X}_T$ such that $\left\|{I}_n-{K}_\gamma \widetilde{{K}_\gamma^{-1}}\right\|_2\le s_{\mathrm{out}}$, where $s_{\mathrm{out}}$ is the certified residual bound associated with the selected plan\;
\end{algorithm}

\begin{table}[htbp]
  \centering
  \small
  \begin{tabular}{r r r r l}
    \hline
    $\varepsilon$ & Total & $s_{\mathrm{out}}$ & SYPP & Polynomial $q_k(x)$ \\
    \hline
    0.0015 & 12 & 0.030717 & 3 & $q_1(x)=1.991037-15.856588x+31.760959x^2$ \\
           &    &          & 4 & $q_2(x)=0.102569+0.102569x+7.383161x^2+7.383161x^3$ \\
           &    &          & 3 & $q_3(x)=1+2.541910x+2.541910x^2$ \\
           &    &          & 2 & $q_4(x)=1+1.192261x+1.192261x^2$ \\
    \hline
    0.0015 & 13 & 0.004865 & 3 & $q_1(x)=1.991037-15.856588x+31.760959x^2$ \\
           &    &          & 3 & $q_2(x)=1+3.839962x+3.839963x^2$ \\
           &    &          & 3 & $q_3(x)=1+2.989700x+2.989700x^2$ \\
           &    &          & 2 & $q_4(x)=1.244063+1.244063x$ \\
           &    &          & 2 & $q_5(x)=1+1.047265x+1.047265x^2$ \\
    \hline
    0.003 & 10 & 0.019885 & 3 & $q_1(x)=1.964953-15.439061x+31.064790x^2$ \\
          &    &          & 3 & $q_2(x)=1+3.346712x+3.346712x^2$ \\
          &    &          & 2 & $q_3(x)=1.403255+1.403255x$ \\
          &    &          & 2 & $q_4(x)=1+1.140006x+1.140006x^2$ \\
    \hline
    0.003 & 11 & 0.002839 & 3 & $q_1(x)=1.964953-15.439061x+31.064790x^2$ \\
          &    &          & 3 & $q_2(x)=1+3.346712x+3.346712x^2$ \\
          &    &          & 3 & $q_3(x)=1+1.757644x+1.757644x^2$ \\
          &    &          & 2 & $q_4(x)=1+1.028634x+1.028634x^2$ \\
    \hline
    0.006 &  8 & 0.047094 & 3 & $q_1(x)=1.915935-14.653845x+29.754543x^2$ \\
          &    &          & 3 & $q_2(x)=1+2.716205x+2.716205x^2$ \\
          &    &          & 2 & $q_3(x)=1+1.262596x+1.262596x^2$ \\
    \hline
    0.006 &  9 & 0.014106 & 3 & $q_1(x)=1.915935-14.653845x+29.754543x^2$ \\
          &    &          & 4 & $q_2(x)=0.639753+0.639753x+4.060692x^2+4.060692x^3$ \\
          &    &          & 2 & $q_3(x)=1+1.108734x+1.108734x^2$ \\
    \hline
    0.006 & 10 & 0.002087 & 3 & $q_1(x)=1.915935-14.653845x+29.754543x^2$ \\
          &    &          & 3 & $q_2(x)=1+2.716205x+2.716205x^2$ \\
          &    &          & 2 & $q_3(x)=1.160973+1.160973x$ \\
          &    &          & 2 & $q_4(x)=1+1.020113x+1.020111x^2$ \\
    \hline
    0.012 &  7 & 0.047594 & 3 & $q_1(x)=1.828900-13.257429x+27.420730x^2$ \\
          &    &          & 3 & $q_2(x)=1+2.072900x+2.072900x^2$ \\
          &    &          & 1 & $q_3(x)=1.046594+1.046594x$ \\
    \hline
    0.012 &  8 & 0.008118 & 3 & $q_1(x)=1.828900-13.257429x+27.420730x^2$ \\
          &    &          & 3 & $q_2(x)=1+2.072900x+2.072900x^2$ \\
          &    &          & 2 & $q_3(x)=1+1.071558x+1.071558x^2$ \\
    \hline
    0.025 &  6 & 0.048057 & 4 & $q_1(x)=1.528164+1.400800x-12.902311x^2+32x^4$ \\
          &    &          & 2 & $q_2(x)=1+1.266514x+1.266514x^2$ \\
    \hline
    0.025 &  7 & 0.008458 & 3 & $q_1(x)=1.679044-10.844625x+23.373926x^2$ \\
          &    &          & 2 & $q_2(x)=1.301562+1.301562x$ \\
          &    &          & 2 & $q_3(x)=1+1.073878x+1.073878x^2$ \\
    \hline
  \end{tabular}
  \caption{Polynomial plans with \texttt{CMAX}=32 (the maximum absolute polynomial coefficient), \texttt{INTERVAL\_PAD\_REL}=0.001, and \texttt{NOISE\_ABS}=0.001. Total denotes the total number of symmetric polynomial products (SYPP) used by the plan, and the SYPP column reports the SYPP cost of each polynomial $q_k$. The quantity $s_{\mathrm{out}}$ is the certified final residual bound, i.e., an upper bound on $\left\|{I}_n-{K}_\gamma \widetilde{{K}_\gamma^{-1}}\right\|_2$.}
  \label{tab:poly-hp-plans}
\end{table}

\section{CIFAR-10 Experiment}
\label{app:cifar10-details}

\paragraph{Dataset and model.}
This appendix describes the CIFAR-10~\citep{krizhevsky2009learning} experiment shown in the second row of Figure~\ref{fig:record4-baseline-plot}, where we report test accuracy versus training step and training time. The 50{,}000 training images are split into 45{,}000 training examples and 5{,}000 validation examples, and the standard 10{,}000-image test set is used for final evaluation. Training augmentation consists of random cropping with padding 4 and random horizontal flipping. All images are normalized channelwise using the standard CIFAR-10 mean and standard deviation. The model is a residual MLP with 32 hidden layers of width 512. Each hidden layer consists of a linear map followed by LayerNorm and GELU. Residual connections are used whenever the input and output dimensions match. Thus, the first layer $3072 \to 512$ has no skip connection, while the remaining 31 hidden layers of shape $512 \to 512$ use residual additions. The output layer is a linear classifier from 512 to 10.

\paragraph{Training setup.}
All three methods are trained for 100 epochs with batch size 4096 on an A100 GPU. The learning-rate schedule is linear warmup followed by cosine decay, with 100 warmup steps and minimum learning-rate ratio 0.1. Validation is evaluated every 24 training steps during hyperparameter tuning. For Muon and Newton--Muon, only the hidden-layer weight matrices are assigned to the matrix optimizer, while the remaining parameters are optimized by AdamW. For the pure AdamW baseline, all trainable parameters are optimized by AdamW. We tuned the hyperparameters of all methods on the validation split. We then retrained each method with its selected hyperparameters on the full 50{,}000-image CIFAR-10 training set and report test accuracy versus training step and training time in Figure~\ref{fig:record4-baseline-plot}.

\paragraph{Final hyperparameters.}
The final AdamW baseline uses learning rate $8\times 10^{-4}$, weight decay $10^{-2}$, and $(\beta_1,\beta_2)=(0.9,0.999)$. The final Muon configuration uses AdamW on the non-matrix parameters with learning rate $1.6\times 10^{-3}$, weight decay $10^{-2}$, and $(\beta_1,\beta_2)=(0.9,0.999)$, and applies Muon to the hidden-layer weight matrices with matrix learning rate $0.16$, matrix weight decay $10^{-3}$, momentum $0.8$. The final Newton--Muon configuration uses AdamW on the non-matrix parameters with learning rate $8\times 10^{-4}$, weight decay $10^{-2}$, and $(\beta_1,\beta_2)=(0.9,0.999)$, and applies Newton--Muon to the hidden-layer weight matrices with matrix learning rate $0.16$, matrix weight decay $3\times 10^{-4}$, momentum $0.75$, together with EWMA $\beta=0.95$, ridge $\gamma=0.05$, and refresh interval $k=16$.

\section{Kronecker-Factored Curvature}
\label{app:kronecker-factor}

Here we derive the Kronecker-factored approximation in~\eqref{eq:HW-kron}.
In a transformer with sequence length $N$, let $\boldsymbol{z}_t\in\R^{n}$ denote the input activation at token $t$ and let $\boldsymbol{y}_t\in\R^{m}$ denote the output of a linear map $\boldsymbol{y}_t={W}\boldsymbol{z}_t$ for $t=1,\dots,N$. For an update direction ${Q}\in\R^{m\times n}$, the output perturbation at token $t$ is $\Delta \boldsymbol{y}_t={Q}\boldsymbol{z}_t$.
To expose the Kronecker structure, first consider a single token loss $L_t(\boldsymbol{y}_t)$ with output-space curvature ${H}_t \coloneqq \nabla_{\boldsymbol{y}_t}^2 L_t(\boldsymbol{y}_t)\in\R^{m\times m}$.
Then the exact second-order change satisfies
\[
  \delta^2 L_t
  = (\Delta \boldsymbol{y}_t)^\top {H}_t (\Delta \boldsymbol{y}_t)
  = ({Q}\boldsymbol{z}_t)^\top {H}_t ({Q}\boldsymbol{z}_t).
\]
Using $\vecop({Q}\boldsymbol{z}_t)=(\boldsymbol{z}_t^\top\otimes {I}_m)\vecop({Q})$, we obtain
\[
  ({Q}\boldsymbol{z}_t)^\top {H}_t ({Q}\boldsymbol{z}_t)
  =
  \vecop({Q})^\top\Big((\boldsymbol{z}_t\boldsymbol{z}_t^\top)\otimes {H}_t\Big)\vecop({Q}).
\]
For token-coupled losses, stack token outputs into $\boldsymbol{y} \coloneqq [\boldsymbol{y}_1^\top,\dots,\boldsymbol{y}_N^\top]^\top \in \R^{mN}$,
and let the averaged scalar loss be $f(\boldsymbol{y}) \coloneqq L(\boldsymbol{y})/N$, where $L(\boldsymbol{y})$ denotes the summed loss over the $N$ tokens. The token-coupled output-space curvature of $L$ is
\[
  \mathscr{H}  \coloneqq  \nabla_{\boldsymbol{y}}^2 L(\boldsymbol{y})\in\R^{(mN)\times(mN)}.
\]
Write $\mathscr{H}$ in $N\times N$ blocks of size $m\times m$:
\[
  \mathscr{H} =
  \begin{bmatrix}
    {H}_{11} & \cdots & {H}_{1N}\\
    \vdots & \ddots & \vdots\\
    {H}_{N1} & \cdots & {H}_{NN}
  \end{bmatrix},
  \qquad
  {H}_{ts}\in\R^{m\times m}.
\]
Then
\[
  \delta^2 f
   = 
  \frac{1}{N}\sum_{t=1}^N\sum_{s=1}^N ({Q}\boldsymbol{z}_t)^\top {H}_{ts} ({Q}\boldsymbol{z}_s).
\]
Each term can be written as
\[
  ({Q}\boldsymbol{z}_t)^\top {H}_{ts} ({Q}\boldsymbol{z}_s)
  =
  \vecop({Q})^\top\Big((\boldsymbol{z}_t \boldsymbol{z}_s^\top)\otimes {H}_{ts}\Big)\vecop({Q}),
\]
so the exact parameter-space Hessian for the token-coupled loss is
\begin{equation}
  \label{eq:HW-transformer-exact}
  {\mathcal{H}}_{{W}}
   = 
  \frac{1}{N}\sum_{t=1}^N\sum_{s=1}^N (\boldsymbol{z}_t \boldsymbol{z}_s^\top)\otimes {H}_{ts}
  \qquad\in\R^{(mn)\times(mn)}.
\end{equation}
Define the input activation matrix ${Z}=[\boldsymbol{z}_1,\dots,\boldsymbol{z}_N]\in\R^{n\times N}$,
the diagonal-block average curvature
\[
  {H}  \coloneqq  \frac{1}{N}\sum_{t=1}^N {H}_{tt}.
\]
A simplifying approximation retains only diagonal-token curvature contributions and replaces ${H}_{tt}$ by their average ${H}$,
yielding
\begin{equation}
  \label{eq:HW-transformer-kron}
  {\mathcal{H}}_{{W}}
   \approx 
  \frac{1}{N}\sum_{t=1}^N (\boldsymbol{z}_t \boldsymbol{z}_t^\top)\otimes {H}
   = 
  ({Z}{Z}^\top/N)\otimes {H}.
\end{equation}

\paragraph{Remark.} 
For architectures where loss decomposes across tokens/examples such as MLPs, the parameter-space Hessian~\eqref{eq:HW-transformer-exact} is block-diagonal, i.e., ${H}_{ts}=\mathbf{0}$ for $t\neq s$.
In this case, \eqref{eq:HW-transformer-exact} reduces to the diagonal-token form
\[
  {\mathcal{H}}_{{W}}
  =
  \frac{1}{N}\sum_{t=1}^N (\boldsymbol{z}_t \boldsymbol{z}_t^\top)\otimes {H}_{tt},
\]
so the only approximation in \eqref{eq:HW-transformer-kron} comes from replacing the varying blocks ${H}_{tt}$ by their average ${H}$.
As a result, the Kronecker estimator \eqref{eq:HW-transformer-kron} is expected to be more accurate when the loss is not token-coupled.

\section{Theoretical Quadratic Score Under Isotropic Activation}
\label{app:quadratic-score}
This appendix develops the theoretical quadratic score analysis under an isotropic activation second moment, deriving explicit formulas and approximations for gradient descent, Muon, Newton--Muon, and Newton, and then using them to interpret the corresponding numerical study.

\subsection{Score Formulas for Different Directions}
\label{app:score-formulas}

We use \eqref{eq:score-matrix-form} to compare four update directions under the quadratic surrogate:
(i) the raw GD direction ${Q}={G}$, (ii) the Muon direction ${Q}_{\mathrm{Muon}}=\mathrm{msgn}({G})$, (iii) the Newton--Muon direction ${Q}_{\mathrm{Newton\text{-}Muon}}=\mathrm{msgn}\big({G}({Z}{Z}^\top)^{-1}\big)$, and (iv) the Newton direction ${Q}_{\mathrm{Newton}}={H}^{-1}{G}({Z}{Z}^\top)^{-1}$.

\paragraph{GD direction.}
For ${Q}={G}$, the numerator is $\mathrm{tr}({G}{G}^\top)=\|{G}\|_F^2$ and the denominator is
$\mathrm{tr} \big({H} {G} ({Z}{Z}^\top/N) {G}^\top\big)$. Using ${G}={H} ({W}-W^\star) ({Z}{Z}^\top/N)$,
we have
\[
  \mathrm{tr}({G}{G}^\top)
   = 
  \mathrm{tr} \Big({H}^2 ({W}-W^\star) ({Z}{Z}^\top/N)^2 ({W}-W^\star)^\top\Big).
\]
Similarly,
\[
  \mathrm{tr} \big({H} {G} ({Z}{Z}^\top/N) {G}^\top\big)
   = 
  \mathrm{tr} \Big({H}^3 ({W}-W^\star) ({Z}{Z}^\top/N)^3 ({W}-W^\star)^\top\Big).
\]
Hence the score for the GD direction has the closed form
\begin{equation}
  \label{eq:score-grad-deterministic}
  s ({G})
   = 
  \frac{
    \mathrm{tr} \Big({H}^2 ({W}-W^\star) ({Z}{Z}^\top/N)^2 ({W}-W^\star)^\top\Big)^2
  }{
    \mathrm{tr} \Big({H}^3 ({W}-W^\star) ({Z}{Z}^\top/N)^3 ({W}-W^\star)^\top\Big)
  }.
\end{equation}

\paragraph{Muon direction.}
We consider the matrix sign \( {Q}_{\mathrm{Muon}}  \coloneqq  \mathrm{msgn}(G) \). For $G=USV^\top$,
\[
  \mathrm{tr}({Q}_{\mathrm{Muon}}{G}^\top)
   = 
  \mathrm{tr} \big({U}{V}^\top ({V}S {U}^\top)\big)
   = 
  \mathrm{tr}(S)
   = 
  \|{G}\|_\ast
   = 
  \big\|{H} ({W}-W^\star) ({Z}{Z}^\top/N)\big\|_\ast,
\]
and the denominator becomes
\[
  \mathrm{tr} \Big({H} {Q}_{\mathrm{Muon}} ({Z}{Z}^\top/N) {Q}_{\mathrm{Muon}}^\top\Big)
   = 
  \mathrm{tr} \Big({H} {U} {V}^\top({Z}{Z}^\top/N){V} {U}^\top\Big)
   = 
  \mathrm{tr} \Big({U}^\top {H} {U}  \cdot  {V}^\top({Z}{Z}^\top/N){V}\Big).
\]
Therefore, the Muon-direction score is
\begin{equation}
  \label{eq:score-muon-deterministic}
  s ({Q}_{\mathrm{Muon}})
   = 
  \frac{
    \big\|{H} ({W}-W^\star) ({Z}{Z}^\top/N)\big\|_\ast^{ 2}
  }{
    \mathrm{tr} \Big({U}^\top {H} {U}  \cdot  {V}^\top({Z}{Z}^\top/N){V}\Big)
  }.
\end{equation}
The score for Newton--Muon can be derived similarly. We do not analyze it theoretically here; instead, we evaluate its predicted score numerically and compare it with Muon in our numerical study.

\paragraph{Newton direction.}
The Newton step is \( {Q}_{\mathrm{Newton}} \propto {H}^{-1} {G} ({Z}{Z}^\top)^{-1} \). Substituting into
\eqref{eq:score-matrix-form} gives
\[
  \mathrm{tr}({Q}_{\mathrm{Newton}}{G}^\top)
   = 
  \mathrm{tr} \big({H}^{-1}{G}({Z}{Z}^\top)^{-1}{G}^\top\big),
\]
and, using ${H} {Q}_{\mathrm{Newton}}({Z}{Z}^\top/N)={G}/N$,
\[
  \mathrm{tr} \Big({H} {Q}_{\mathrm{Newton}} ({Z}{Z}^\top/N) {Q}_{\mathrm{Newton}}^\top\Big)
   = 
  \frac{1}{N}\mathrm{tr} \big({G} {Q}_{\mathrm{Newton}}^\top\big)
   = 
  \frac{1}{N}\mathrm{tr}({Q}_{\mathrm{Newton}}{G}^\top).
\]
Therefore, \( s ({Q}_{\mathrm{Newton}})
   = 
  N \mathrm{tr}({Q}_{\mathrm{Newton}}{G}^\top)\).
Under ${G}={H} ({W}-W^\star) ({Z}{Z}^\top/N)$, this can be rewritten as
\begin{equation}
  \label{eq:score-newton-displacement}
  s ({Q}_{\mathrm{Newton}})
   = 
  \mathrm{tr} \Big({H} ({W}-W^\star) ({Z}{Z}^\top/N) ({W}-W^\star)^\top\Big).
\end{equation}

\subsection{Isotropic Baseline Numerical Study}

From \eqref{eq:score-grad-deterministic}, \eqref{eq:score-muon-deterministic}, and \eqref{eq:score-newton-displacement}, the quadratic score
\eqref{eq:score-matrix-form} yields explicit expressions for the GD, Muon, and Newton directions. By using different distributions on ${H}$, ${Z}$, and $({W}-W^\star)$, we can evaluate how
anisotropy in curvature, activations, and displacement interacts to favor different update geometries, either analytically when expectations simplify or
numerically by Monte Carlo simulation.
Here we study the simplest baseline in which both the activation and the displacement are isotropic Gaussian; note that the same framework can be reused under more structured, non-isotropic
specifications by changing only the distribution for ${H}$, ${Z}$, and $({W}-W^\star)$.

\paragraph{Assumptions.}
First, we sample ${Z}$ with i.i.d.\ standard normal entries. For the theoretical analysis, we assume the activation second moment is isotropic (while in the simulations we use a finite sample size):
\begin{equation}
  \label{eq:assump-AAId}
  {Z}{Z}^\top/N = {I}_n.
\end{equation}
Second, we assume the square case \(m=n\). Third, we further assume the displacement matrix \({W}-W^\star\) has i.i.d.\ standard normal entries. Under \eqref{eq:assump-AAId}, \({G}={H}({W}-W^\star)\) and the score \eqref{eq:score-matrix-form} simplifies to \(s({Q})=\mathrm{tr}({Q}{G}^\top)^2/\mathrm{tr}\big({H}{Q}{Q}^\top\big)\). Let \({D}_{{H}}\coloneqq\mathrm{diag}(\lambda_1,\dots,\lambda_m)\). We fix this diagonal spectrum (to analyze different levels of curvature anisotropy through the choice of \(\{\lambda_k\}\)), then sample a random orthogonal matrix \({P}\in\R^{m\times m}\) and define \({H}\coloneqq {P}{D}_{{H}}{P}^\top\).

\paragraph{GD score.}
Under \eqref{eq:assump-AAId}, \eqref{eq:score-grad-deterministic} reduces to
\begin{equation}
  \label{eq:score-grad-AAId}
  s ({G})
   = 
  \frac{
    \mathrm{tr} \Big({H}^2 ({W}-W^\star)({W}-W^\star)^\top\Big)^2
  }{
    \mathrm{tr} \Big({H}^3 ({W}-W^\star)({W}-W^\star)^\top\Big)
  }.
\end{equation}
Moreover, for $k\in\{2,3\}$,
\[
  \mathbb{E} \left[\mathrm{tr} \Big({H}^k ({W}-W^\star)({W}-W^\star)^\top\Big)\right]
   = 
  \mathrm{tr} \Big({H}^k \mathbb{E}\big[({W}-W^\star)({W}-W^\star)^\top\big]\Big)
   = 
  n \mathrm{tr}({H}^k).
\]
In the large-$n$ regime, the random traces above concentrate around their means, so a standard
spectrum-level approximation to \eqref{eq:score-grad-AAId} is
\begin{equation}
  \label{eq:score-grad-spectrum-approx}
  s ({G})
   \approx 
  n \frac{\mathrm{tr}({H}^2)^2}{\mathrm{tr}({H}^3)}
   = 
  n \frac{\left(\sum_{j=1}^{m}\lambda_j^2\right)^2}{\sum_{j=1}^{m}\lambda_j^3}.
\end{equation}

\paragraph{Muon score.}
In the square full-rank case, \({U}\in\R^{m\times m}\) is orthogonal, so \(\mathrm{tr}({U}^\top {H} {U})=\mathrm{tr}({H})\). Thus
\begin{equation}
  \label{eq:score-muon-AAId-simple}
  s({Q}_{\mathrm{Muon}})
  =
  \frac{\|{H}({W}-W^\star)\|_\ast^{2}}{\mathrm{tr}({U}^\top {H} {U})}
  =
  \frac{\|{H}({W}-W^\star)\|_\ast^{2}}{\mathrm{tr}({H})}.
\end{equation}
While $\mathbb{E}\|{H} ({W}-W^\star)\|_\ast^{ 2}$ does not simplify to a closed form in general,
it admits an accurate high-dimensional deterministic approximation that depends only on the spectrum of ${H}$
and can be computed numerically. Let
\[
  {S}
   \coloneqq 
  \frac{1}{m} {H} ({W}-W^\star)({W}-W^\star)^\top {H}
   \in\R^{m\times m}.
\]
The eigenvalues of ${S}$ are the squared singular values of ${H}({W}-W^\star)$ divided by $m$.
Consequently,
\begin{equation}
  \label{eq:nucnorm-from-S}
  \|{H} ({W}-W^\star)\|_\ast
   = 
  \sum_{i=1}^m \sigma_i \big({H}({W}-W^\star)\big)
   = 
  \sqrt{m}\sum_{i=1}^m \sqrt{\lambda_i({S})}.
\end{equation}
In the regime $m=n\to\infty$ with ${H}$ deterministic and ${W}-W^\star$ having i.i.d.\ standard normal entries,
Silverstein--Choi's analysis of sample-covariance type matrices implies that the empirical spectral distribution of ${S}$
converges to a nonrandom limiting law characterized by a fixed-point equation for its Stieltjes transform
\citep{silverstein1995empirical}. In particular, writing the eigenvalues of ${H}$ as $\{\lambda_k\}_{k=1}^{m}$, the Stieltjes transform $m_S(z)$ of the limiting law is the unique solution in the
upper half-plane to the equation
\begin{equation}
  \label{eq:silverstein-choi-fixedpoint}
  m_S(z)
   = 
  - \frac{1}{z}\cdot\frac{1}{m}\sum_{k=1}^{m}\frac{1}{1+\lambda_k^2 m_S(z)},
  \qquad z\in\mathbb{C}_+.
\end{equation}
Given $m_S(z)$, the limiting spectral density $\rho_S(x)$ is obtained by the standard inversion formula
\begin{equation}
  \label{eq:stieltjes-inversion}
  \rho_S(x)
   = 
  \lim_{\eta\downarrow 0}\frac{1}{\pi} \Im  m_S(x+i\eta),
  \qquad x>0.
\end{equation}
Equations \eqref{eq:silverstein-choi-fixedpoint} and \eqref{eq:stieltjes-inversion} suggest a concrete numerical pipeline:
\begin{enumerate}
\item Choose a small $\eta>0$ and a grid of $x$ values covering the support of the spectrum of ${S}$.
\item For each $z=x+i\eta$, solve the fixed-point equation \eqref{eq:silverstein-choi-fixedpoint}.
\item Approximate $\rho_S(x)\approx (1/\pi)\Im m_S(x+i\eta)$.
\item Compute the limiting mean singular value of ${H}({W}-W^\star)$ via
\[
  \mu_{1/2}
   \coloneqq 
  \int_0^\infty \sqrt{x} \rho_S(x) \mathrm{d}x,
\]
using numerical quadrature on the grid.
\item Using \eqref{eq:nucnorm-from-S}, the leading-order scaling is
\begin{equation}
  \label{eq:nucnorm-asymp}
  \|{H}({W}-W^\star)\|_\ast
   \approx 
  m^{3/2} \mu_{1/2},
  \qquad
  \text{hence}\qquad
  \mathbb{E}\|{H}({W}-W^\star)\|_\ast^{ 2}
   \approx 
  m^{3} \mu_{1/2}^{ 2},
\end{equation}
where the approximation captures the dominant $m^3$ growth and depends only on the spectrum of ${H}$ through \eqref{eq:silverstein-choi-fixedpoint}.
\end{enumerate}
Substituting \eqref{eq:nucnorm-asymp} into \eqref{eq:score-muon-AAId-simple} yields a corresponding practical approximation for the
expected Muon score,
\begin{equation}
  \label{eq:muon-score-det-eq}
  \mathbb{E} s ({Q}_{\mathrm{Muon}})
   \approx 
  \frac{\mathbb{E}\|{H}({W}-W^\star)\|_\ast^{ 2}}{\mathrm{tr}({H})}
   \approx 
  \frac{m^{3} \mu_{1/2}^{ 2}}{\mathrm{tr}({H})}.
\end{equation}
We will use \eqref{eq:silverstein-choi-fixedpoint}--\eqref{eq:muon-score-det-eq} as the basis for numerically evaluating the Muon
numerator (and thus the score) from the eigenvalues of ${H}$.

\paragraph{Newton--Muon score.}
Under \eqref{eq:assump-AAId}, Newton--Muon coincides exactly with Muon:
\[
  {Q}_{\mathrm{Newton\text{-}Muon}}
   =
  \mathrm{msgn}\big({G}({Z}{Z}^\top)^{-1}\big)
   =
  \mathrm{msgn}({G})
   =
  {Q}_{\mathrm{Muon}}.
\]
Consequently, we use the same approximation
\begin{equation}
  \label{eq:newton-muon-score-det-eq}
  \mathbb{E} s ({Q}_{\mathrm{Newton\text{-}Muon}})
   \approx
  \frac{m^{3} \mu_{1/2}^{ 2}}{\mathrm{tr}({H})}.
\end{equation}

\paragraph{Newton score.}
Applying \eqref{eq:assump-AAId} to \eqref{eq:score-newton-displacement} yields
\[
  s ({Q}_{\mathrm{Newton}})
   = 
  \mathrm{tr} \Big({H} ({W}-W^\star)({W}-W^\star)^\top\Big).
\]
Taking expectation gives the exact identity
\begin{equation}
  \label{eq:Escore-newton}
  \mathbb{E} \left[s ({Q}_{\mathrm{Newton}})\right]
   = 
  \mathrm{tr} \Big({H}\cdot n{I}_m\Big)
   = 
  n \mathrm{tr}({H})
   = 
  n\sum_{k=1}^{m}\lambda_k.
\end{equation}

\paragraph{Comparison.}
Under the simplifying assumptions \eqref{eq:assump-AAId} and $m=n$, we can compare the three directions at the level of the
theoretical quantities already derived for Newton, GD, Muon, and Newton--Muon. Newton has an exact expected score
\eqref{eq:Escore-newton}:
\[
\mathbb{E} \left[s ({Q}_{\mathrm{Newton}})\right]
 = 
n \mathrm{tr}({H})
 = 
n\sum_{k=1}^{m}\lambda_k.
\]
For the GD direction, the spectrum-level approximation \eqref{eq:score-grad-spectrum-approx} gives
\[
s ({G})
 \approx 
n \frac{\mathrm{tr}({H}^2)^2}{\mathrm{tr}({H}^3)}
 = 
n \frac{\left(\sum_{k=1}^{m}\lambda_k^2\right)^2}{\sum_{k=1}^{m}\lambda_k^3}.
\]
For nonnegative eigenvalues $\lambda_k\ge 0$, apply Cauchy inequality to the two sequences $\lambda_k^{3/2}$ and $\lambda_k^{1/2}$:
\[
\frac{\left(\sum_k\lambda_k^2\right)^2}{\sum_k\lambda_k^3}
 \le 
\sum_k\lambda_k
\quad\Longrightarrow\quad
s ({G})
 \lesssim 
n\sum_k\lambda_k
=
\mathbb{E} \left[s ({Q}_{\mathrm{Newton}})\right].
\]
Equality in Cauchy holds if and only if $\lambda_k$ is constant across $k$. Thus, GD matches Newton only when ${H}\propto {I}_m$.

Newton--Muon and Muon also simplify in this equal-eigenvalues case, and its spectrum-only approximation \eqref{eq:muon-score-det-eq} becomes
fully explicit. When ${H}=\lambda {I}_m$ and $m=n$, the limiting law of
${S}=(1/m){H}({W}-W^\star)({W}-W^\star)^\top{H}$
is $\lambda^2$ times the Marchenko--Pastur law at aspect ratio $1$, which yields the closed form
\[
\mu_{1/2}
=
\frac{8}{3\pi}\lambda.
\]
Substituting into \eqref{eq:muon-score-det-eq} gives
\[
\mathbb{E} s ({Q}_{\mathrm{Muon}})
 \approx 
\frac{m^{3}\mu_{1/2}^{ 2}}{\mathrm{tr}({H})}
=
\frac{m^{3}\lambda^{2}\left(\frac{8}{3\pi}\right)^{ 2}}{m\lambda}
=
\left(\frac{64}{9\pi^2}\right)m^{2}\lambda
=
\left(\frac{64}{9\pi^2}\right)\mathbb{E} \left[s ({Q}_{\mathrm{Newton}})\right],
\]
so the isotropic asymptotic approximation predicts that Muon achieves a constant fraction of the Newton score (about $0.72$), while GD coincides with Newton.

Beyond ${H}=\lambda {I}_m$, the Muon and Newton--Muon approximation \eqref{eq:muon-score-det-eq} and \eqref{eq:newton-muon-score-det-eq} still depend only on the
spectrum of ${H}$ through $\mu_{1/2}$ (computed from \eqref{eq:silverstein-choi-fixedpoint} and \eqref{eq:stieltjes-inversion}),
but there is no longer a simple closed-form comparison between the Muon/Newton--Muon score \eqref{eq:muon-score-det-eq} and the GD
approximation \eqref{eq:score-grad-spectrum-approx}, as we will show next in our numerical study.

Under the isotropic-input theory \eqref{eq:assump-AAId}, the Newton--Muon and Muon theoretical predictions coincide exactly; any empirical separation between them comes solely from finite-sample deviations of ${Z}{Z}^\top/N$ from ${I}_n$.
In all experiments below we fix $\lambda_{\min}=10^{-4}$ and vary only $(N,p)$. For each setting, we run $1024$ independent simulations and report the mean score together with the 2.5\%--97.5\% interval. The theory marker in the plot denotes the theoretical score in the limit of infinite data and infinite dimensions $m,n\to\infty$.

\paragraph{Baseline ($N=8192$, $p=0.3$).}
Figure~\ref{fig:sim-baseline} shows a clearly anisotropic spectrum of ${H}$.
In this ill-conditioned regime, both Muon and Newton--Muon yield substantial score improvements over the raw
GD direction, and Newton--Muon is slightly better than Muon.

\begin{figure}[htb]
  \centering
  \begin{minipage}{0.48\linewidth}
    \centering
    \includegraphics[width=\linewidth]{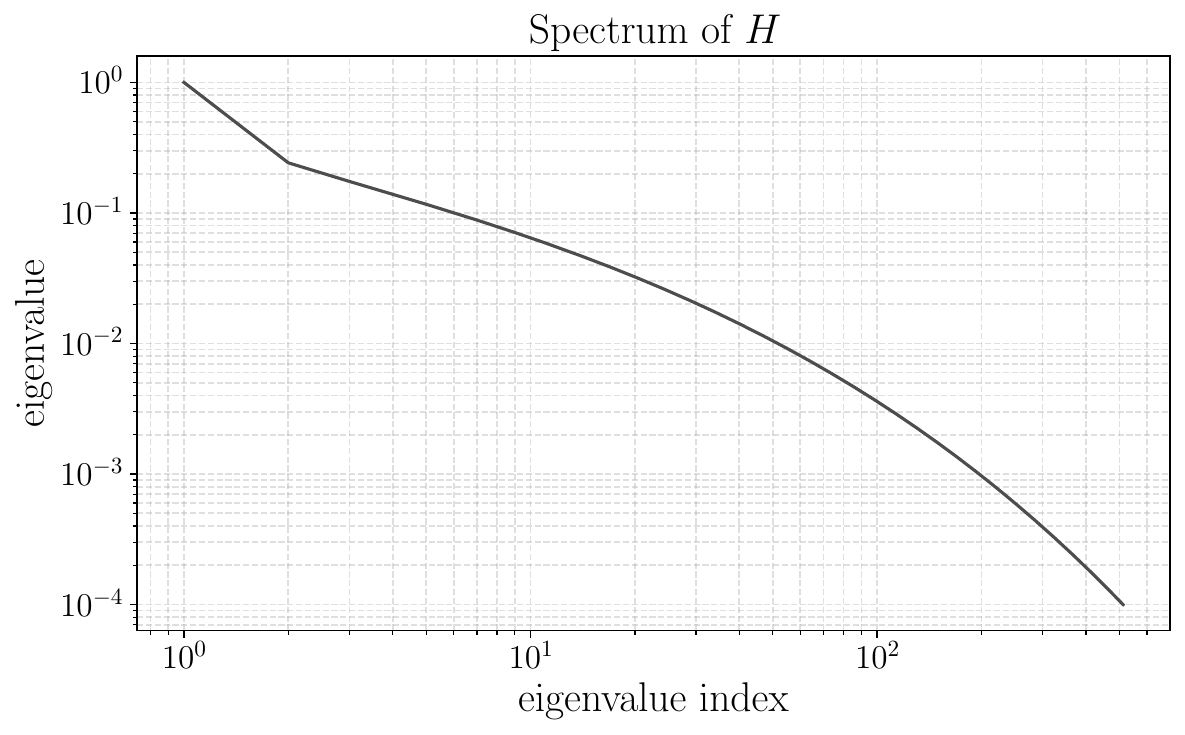}
  \end{minipage}\hfill
  \begin{minipage}{0.51\linewidth}
    \centering
    \includegraphics[width=\linewidth]{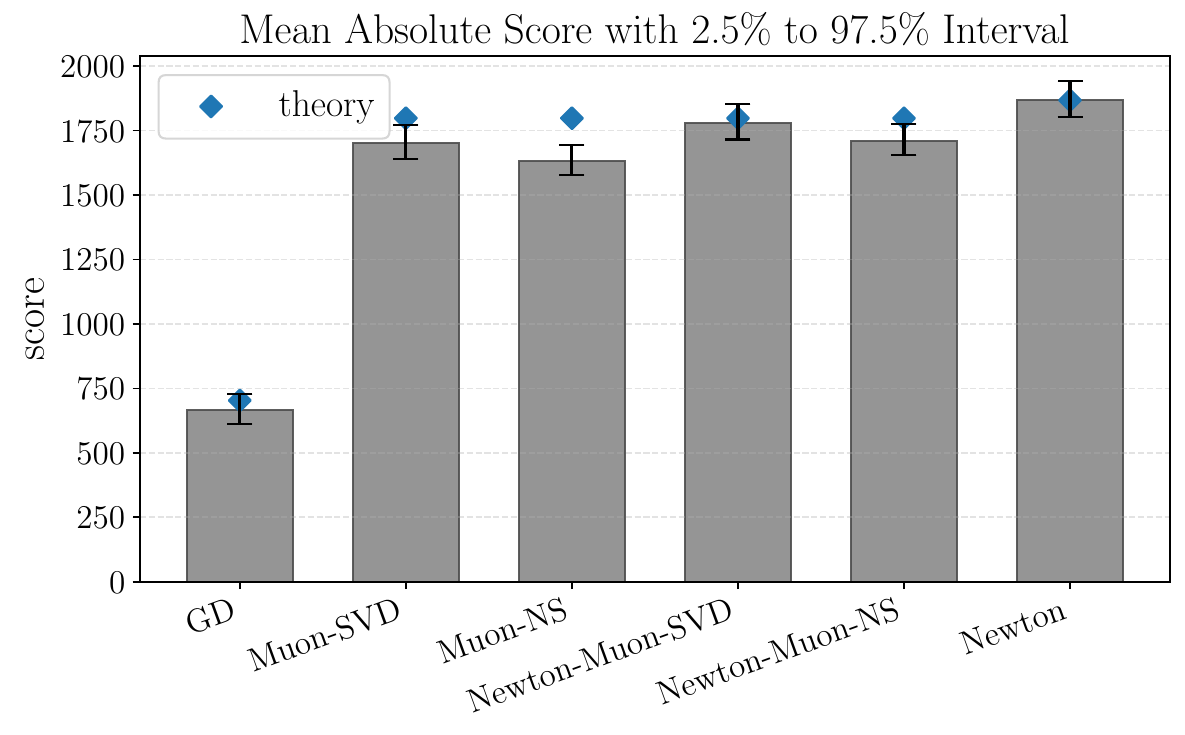}
  \end{minipage}
  \caption{Baseline configuration ($N=8192$, $p=0.3$): spectrum of ${H}$ (left) and mean absolute
  scores $s ({Q})$ for GD, Muon, Newton--Muon, and Newton (right).}
  \label{fig:sim-baseline}
\end{figure}

\paragraph{Uniform curvature ($N=8192$, $p=2.4$).}
Figure~\ref{fig:sim-uniformH} corresponds to a spectrum that is nearly flat at the top and drops sharply only near the tail.
In this more uniform-curvature setting, the score gaps among GD, Muon, and Newton--Muon narrow, indicating that Newton--Muon and Muon help when ${H}$ is strongly anisotropic. Here, Newton--Muon is still slightly better than Muon.

\begin{figure}[htb]
  \centering
  \begin{minipage}{0.48\linewidth}
    \centering
    \includegraphics[width=\linewidth]{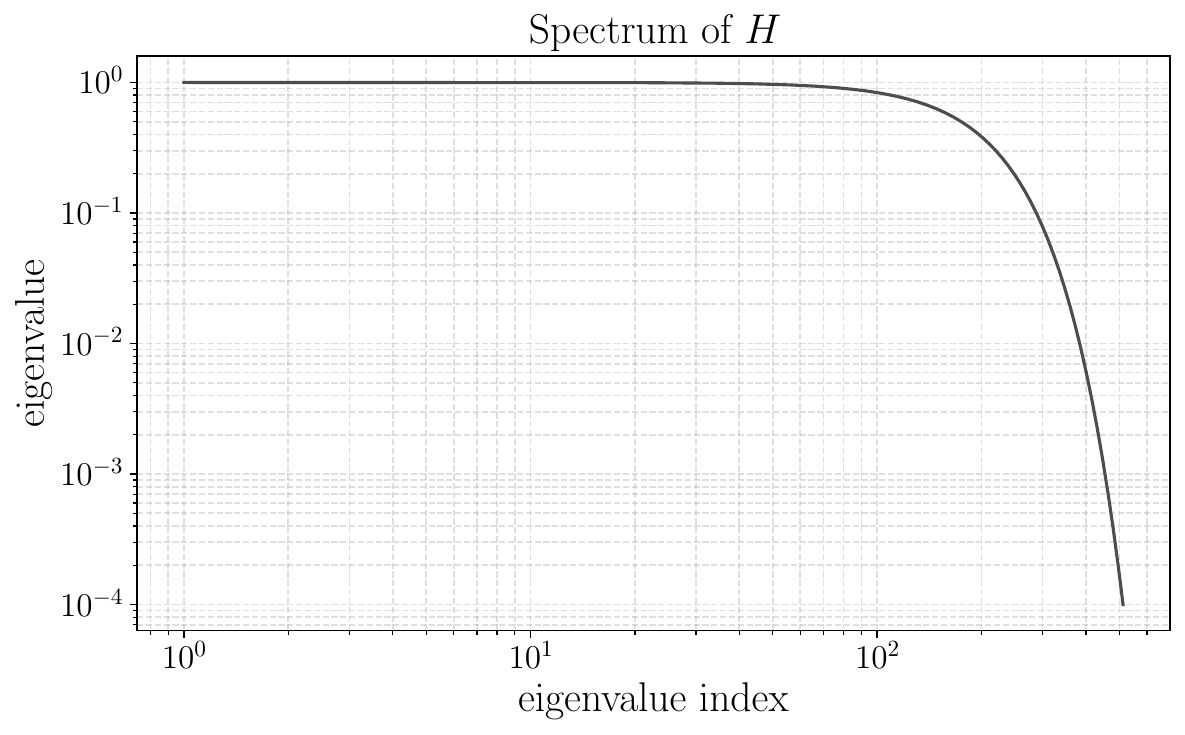}
  \end{minipage}\hfill
  \begin{minipage}{0.51\linewidth}
    \centering
    \includegraphics[width=\linewidth]{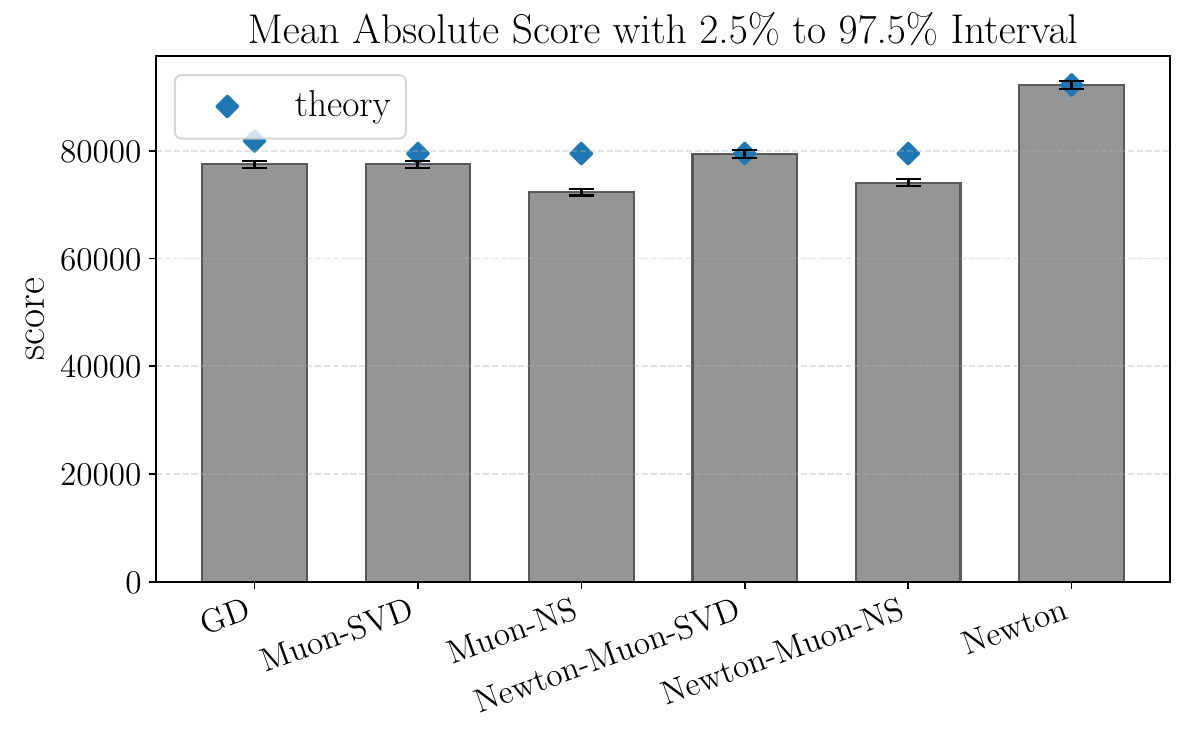}
  \end{minipage}
  \caption{More top-uniform curvature ($N=8192$, $p=2.4$): spectrum of ${H}$ (left) and mean absolute scores (right).}
  \label{fig:sim-uniformH}
\end{figure}

\paragraph{Smaller data ($N=1024$, $p=0.3$).}
Figure~\ref{fig:sim-smallN} keeps the same curvature shape as the baseline but uses a smaller sample size $N$ to form ${Z}{Z}^\top/N$, making the empirical second moment noisier. Relative to Figure~\ref{fig:sim-baseline}, the gap between Newton--Muon and Muon becomes much larger, suggesting that Newton--Muon more effectively compensates for activation anisotropy when the sample size is small.

\begin{figure}[htb]
  \centering
  \begin{minipage}{0.48\linewidth}
    \centering
    \includegraphics[width=\linewidth]{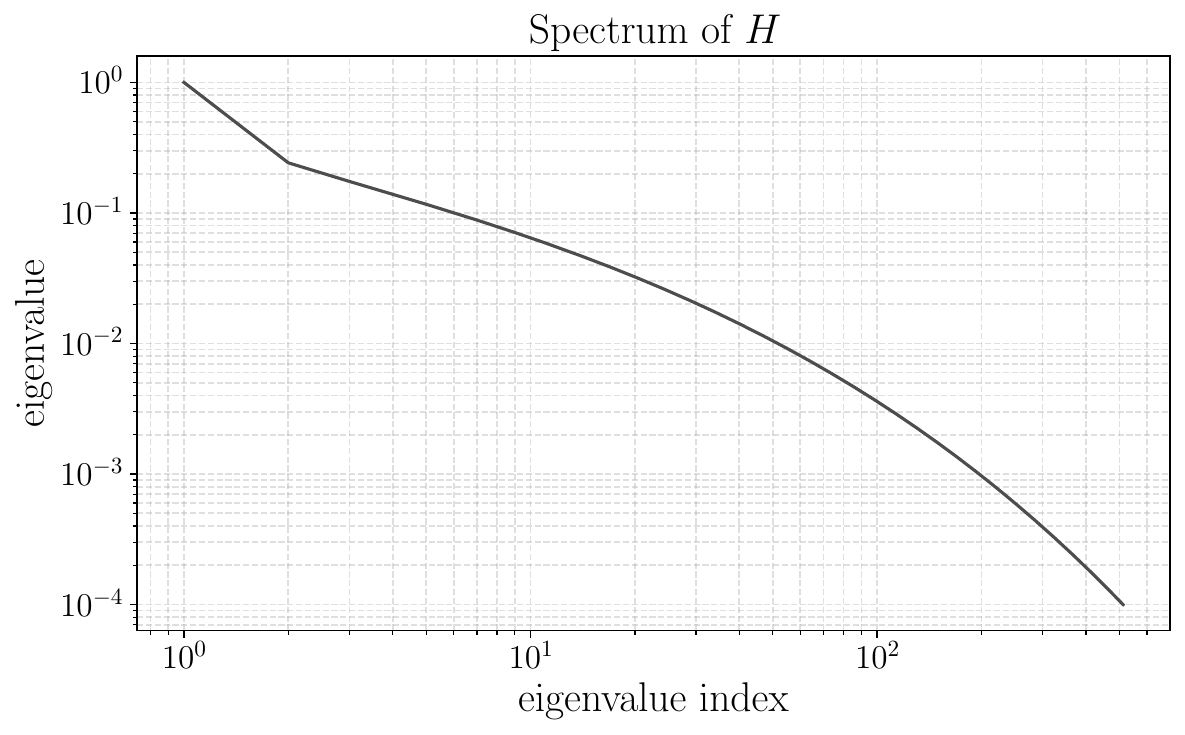}
  \end{minipage}\hfill
  \begin{minipage}{0.51\linewidth}
    \centering
    \includegraphics[width=\linewidth]{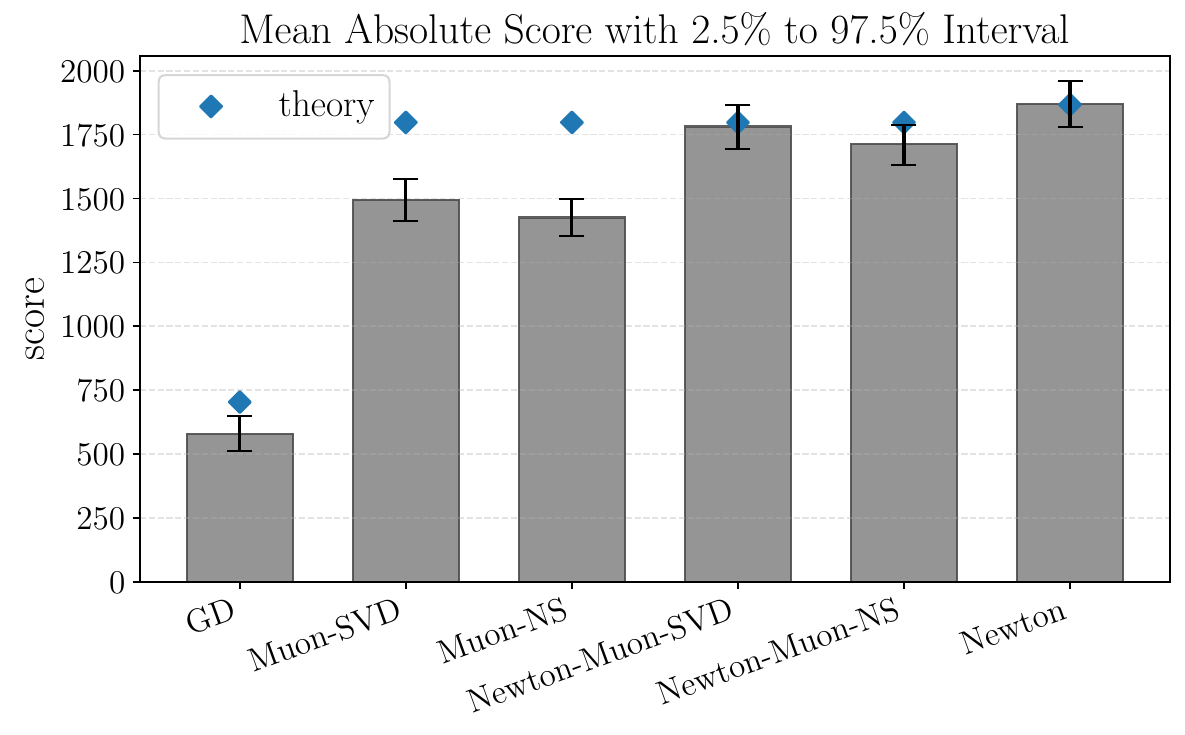}
  \end{minipage}
  \caption{Smaller-$N$ data ($N=1024$, $p=0.3$): spectrum of ${H}$ (left) and mean absolute scores (right).}
  \label{fig:sim-smallN}
\end{figure}

\paragraph{Conclusion.}
Across these three runs, Muon and Newton--Muon achieve their largest score gains when curvature is strongly anisotropic (Figure~\ref{fig:sim-baseline}); when ${H}$ becomes more uniform, the benefit shrinks or even vanishes (Figure~\ref{fig:sim-uniformH}). Newton--Muon outperforms Muon in these three experiments, and especially when $N$ is reduced so that ${Z}{Z}^\top/N$ is less isotropic, the advantage of Newton--Muon over Muon further increases (Figure~\ref{fig:sim-smallN}).

\section{Non-Isotropic Assumption}
\label{app:llm-exp-sigmaW}

Here we report attempts to replace the isotropic assumption $\Sigma_W \propto I_m$ by estimating $\Sigma_W$ from training dynamics. \eqref{eq:SigmaW-polar-update3} motivates the use of a non-isotropic ${\Sigma}_{W}$. When ${\Sigma}_{W}$ admits a factorization of the form ${\Sigma}_{W} = MM^\top$, the update in \eqref{eq:SigmaW-polar-update3} can be implemented without explicitly constructing $\Sigma_W^{1/2}$.

\begin{proposition}[Factorized form of \eqref{eq:SigmaW-polar-update3}]
\label{prop:factorized-polar-SigmaW}
Suppose ${\Sigma}_{{W}}={M}{M}^\top$ for some ${M}\in\mathbb{R}^{m\times r}$. Then
\[
{Q}^\star
=
{M} \mathrm{msgn}\Big({M}^\top {G}({Z}{Z}^\top)^{-1}\Big).
\]
\end{proposition}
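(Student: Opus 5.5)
The plan is to reuse the mechanism from the proof of Proposition~\ref{prop:exact-newton-polar}: there we showed $Q^\star = W - W^\star$, and it suffices to show that $M\,\mathrm{msgn}\big(M^\top G(ZZ^\top)^{-1}\big)$ also equals $W-W^\star$ whenever $MM^\top = \Sigma_W = (W-W^\star)(W-W^\star)^\top$. By Assumption~\ref{assump:exact-newton}, $G(ZZ^\top)^{-1} = H(W-W^\star)/N$, so the argument of the matrix sign is $\frac1N M^\top H (W-W^\star)$.

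The key structural fact relates any factor $M$ of $\Sigma_W$ to the SVD of the displacement. Take the compact SVD $W-W^\star = U_Q S_Q V_Q^\top$ with rank $r_0$. Since $MM^\top = U_Q S_Q^2 U_Q^\top$, the column space of $M$ equals that of $U_Q$. I will show that $M = U_Q S_Q R^\top$ for some $R\in\mathbb{R}^{r\times r_0}$ with orthonormal columns ($R^\top R = I_{r_0}$). To do so, set $R^\top := S_Q^{-1}U_Q^\top M$. Then $R^\top R = S_Q^{-1}U_Q^\top MM^\top U_Q S_Q^{-1} = I$. Moreover, $U_QS_QR^\top = U_QU_Q^\top M = M$, because the columns of $M$ lie in $\mathrm{range}(U_Q)$. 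This step must implicitly assume $r\ge r_0$, which holds automatically since $\mathrm{rank}(M)=r_0$.

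Substituting gives
\[
M^\top H (W-W^\star) = R\,\big(S_Q U_Q^\top H U_Q S_Q\big)\,V_Q^\top .
\]
As in the earlier proof, $P := S_Q U_Q^\top H U_Q S_Q$ is symmetric positive definite. I would then argue that $\mathrm{msgn}(R P V_Q^\top) = R V_Q^\top$. Write the eigendecomposition $P = O\Lambda O^\top$. Then $RPV_Q^\top = (RO)\Lambda (V_QO)^\top$, and both $RO$ and $V_QO$ have orthonormal columns, so this is a compact SVD. Its matrix sign is therefore $RO\,O^\top V_Q^\top = RV_Q^\top$. The positive scalar $1/N$ does not affect the sign.

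Finally, $M\,RV_Q^\top = U_QS_QR^\top R V_Q^\top = U_QS_QV_Q^\top = W-W^\star = Q^\star$, using Proposition~\ref{prop:exact-newton-polar}. The main obstacle is the factor-representation step, specifically justifying $\mathrm{range}(M)=\mathrm{range}(U_Q)$ and the resulting isometry $R$ when $M$ is non-square. I would handle it with the equality $\ker(M^\top)=\ker(MM^\top)=\ker(U_QU_Q^\top)$ before invoking the explicit formula for $R$.
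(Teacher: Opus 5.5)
Your proof is correct, but it takes a different route from the paper.

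\textbf{Your route.} You use Assumption~\ref{assump:exact-newton} and the fact $Q^\star = W-W^\star$, established in the proof of Proposition~\ref{prop:exact-newton-polar}. You then evaluate the factorized expression directly. You write an arbitrary factor as $M = U_Q S_Q R^\top$ with $R^\top R = I$, and reduce the argument of the matrix sign to $R P V_Q^\top$ with $P = S_Q U_Q^\top H U_Q S_Q \succ 0$. From this you read off $\mathrm{msgn} = R V_Q^\top$, so that $M R V_Q^\top = W - W^\star$.

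\textbf{The paper's route.} The paper never uses $H$, the assumption, or the SVD of the displacement at this point. It takes a compact SVD $M = U_M S_M V_M^\top$ and notes that $\Sigma_W^{1/2} = U_M S_M U_M^\top$. It then applies the left-isometry rule $\mathrm{msgn}(UX) = U\,\mathrm{msgn}(X)$, valid for $U$ with orthonormal columns, twice. This shows that both $\Sigma_W^{1/2}\mathrm{msgn}(\Sigma_W^{1/2}X)$ and $M\,\mathrm{msgn}(M^\top X)$ equal $U_M S_M\,\mathrm{msgn}(S_M U_M^\top X)$, with $X = G(ZZ^\top)^{-1}$. The claim then follows from \eqref{eq:SigmaW-polar-update3}.

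\textbf{What each buys.} The paper's argument is a purely algebraic identity. It holds for every matrix $X$ and every positive semidefinite $\Sigma_W = MM^\top$, with no appeal to Assumption~\ref{assump:exact-newton} or $H \succ 0$. That generality is what justifies using the factorized form with an estimated $M$, such as the momentum buffer in this appendix, as an exact implementation of \eqref{eq:SigmaW-polar-update3}. In that setting $M$ is not a factor of the true displacement second moment, and the assumption need not hold.

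Your argument proves the statement as posed, where $Q^\star$ is the minimizer under Assumption~\ref{assump:exact-newton} with the true $\Sigma_W$. It makes transparent why the factorized formula lands exactly on $W-W^\star$, for any choice of factor. However, it relies on $H \succ 0$ and does not give the assumption-free equivalence of the two formulas.

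One small gap: the degenerate case $W = W^\star$, where $r_0 = 0$ and $M = 0$, is outside your construction. It is trivial, since both sides vanish, but it is worth a sentence.
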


\begin{proof}[Proof of Proposition~\ref{prop:factorized-polar-SigmaW}]
Let $k\coloneqq\mathrm{rank}(M)$ and take a compact SVD $M=U_M S_M V_M^\top$, where $U_M\in\mathbb{R}^{m\times k}$ and $V_M\in\mathbb{R}^{r\times k}$ have orthonormal columns and $S_M\in\mathbb{R}^{k\times k}$ is diagonal with positive entries. Then $\Sigma_W=MM^\top=U_M S_M^2 U_M^\top$, so $\Sigma_W^{1/2}=U_M S_M U_M^\top$.
Hence
\[
\Sigma_W^{1/2}G(ZZ^\top)^{-1}
=
U_M S_M U_M^\top G(ZZ^\top)^{-1}.
\]
Using the compact-SVD definition of the rectangular matrix sign, if $U_M$ has orthonormal columns, then $\mathrm{msgn}(U_M X)=U_M\mathrm{msgn}(X)$ for any matrix $X$. Applying this with $X=S_M U_M^\top G(ZZ^\top)^{-1}$ gives
\[
\mathrm{msgn}\Big(\Sigma_W^{1/2}G(ZZ^\top)^{-1}\Big)
=
U_M \mathrm{msgn}\Big(S_M U_M^\top G(ZZ^\top)^{-1}\Big).
\]
Substituting into \eqref{eq:SigmaW-polar-update3}, we obtain
\begin{equation}
\label{eq:factorized-polar-qstar}
Q^\star
=
\Sigma_W^{1/2}
\mathrm{msgn}\Big(\Sigma_W^{1/2}G(ZZ^\top)^{-1}\Big)
=
U_M S_M
\mathrm{msgn}\Big(S_M U_M^\top G(ZZ^\top)^{-1}\Big).
\end{equation}

On the other hand, $M^\top G(ZZ^\top)^{-1}=V_M S_M U_M^\top G(ZZ^\top)^{-1}$. Applying the same identity with $V_M$ yields
\[
\mathrm{msgn}\Big(M^\top G(ZZ^\top)^{-1}\Big)
=
V_M \mathrm{msgn}\Big(S_M U_M^\top G(ZZ^\top)^{-1}\Big).
\]
Therefore
\begin{align*}
M \mathrm{msgn}\Big(M^\top G(ZZ^\top)^{-1}\Big)
&= 
(U_M S_M V_M^\top)\Big(V_M \mathrm{msgn}\big(S_M U_M^\top G(ZZ^\top)^{-1}\big)\Big) \\
&= 
U_M S_M \mathrm{msgn}\Big(S_M U_M^\top G(ZZ^\top)^{-1}\Big),
\end{align*}
which matches \eqref{eq:factorized-polar-qstar}.
\end{proof}

Motivated by this identity, we explored practical estimators of ${M}$ in benchmark training runs, including momentum-based and diagonal variants derived from update statistics. However, these non-isotropic variants did not yield consistent improvements over Newton--Muon.

\paragraph{Keeping a momentum buffer of the actual update.}
We maintain a momentum buffer of the actual parameter updates, denoted by ${M}$.
Heuristically, if the update directions tend to point toward an optimum $W^\star$, then this buffer can be viewed as a coarse proxy of
${W}-W^\star$.
In practice, we formed two candidate directions
\[
{Q}_{I}
=
\mathrm{msgn} \big({G}({Z}{Z}^\top)^{-1}\big),
\qquad
{Q}_{M}
=
{M} \mathrm{msgn} \Big({M}^\top {G}({Z}{Z}^\top)^{-1}\Big),
\]
and then combined them by a convex average after normalizing their scales
\[
{Q}
=
(1-\lambda) \frac{{Q}_{I}}{\|{Q}_{I}\|_F}
+
\lambda \frac{{Q}_{M}}{\|{Q}_{M}\|_F},
\qquad \lambda\in[0,1].
\]
Despite the motivation from Proposition~\ref{prop:factorized-polar-SigmaW}, this averaging did not yield consistent gains, and typically underperformed
the Newton--Muon update ${Q}_{I}$.

\paragraph{Using diagonal estimation of the covariance from update statistics.}
We also tried a computationally cheaper approximation that estimates a diagonal ${\Sigma}_{{W}}$ from the second moment of the actual update directions. Concretely, for each matrix parameter ${W}\in\mathbb{R}^{m\times n}$ we maintain a state vector
$\boldsymbol{u}\in\mathbb{R}^{m}$ that tracks the row-wise magnitudes of
the applied Muon directions via an EWMA:
\[
\boldsymbol{u}\leftarrow \beta \boldsymbol{u}+(1-\beta) \big\|{Q}\big\|_{\text{row}},
\]
where $\|\cdot\|_{\text{row}}$ denotes the $\ell_2$ norm along the column dimension.
We then form a damped diagonal estimate of ${\Sigma}_{{W}}$ using the squared row magnitudes,
\[
\boldsymbol{u}^{\odot 2}\ \approx\ \mathrm{diag}({\Sigma}_{{W}}),
\qquad
{M}^2
=
\mathrm{diag} \Big(\lambda \boldsymbol{u}^{\odot 2} + (1-\lambda)\overline{\boldsymbol{u}^{\odot 2}}\, \mathbf{1}_m\Big),
\]
where $\overline{\boldsymbol{u}^{\odot 2}}$ is the mean of the entries of $\boldsymbol{u}^{\odot 2}$, $\lambda\in[0,1]$ is a damping coefficient, and $\mathbf{1}_m\in\mathbb{R}^m$ is the all-ones vector. Let ${M}$ be the positive diagonal square root of ${M}^2$. The resulting update takes the form
\[
{Q}_{M}
\propto
{M}
\mathrm{msgn} \Big({M}{G}({Z}{Z}^\top)^{-1}\Big).
\]
Despite being inexpensive, this diagonal ${\Sigma}_{{W}}$ variant did not outperform Newton--Muon in our experiments and in some cases performed worse than standard Muon.

\paragraph{Conclusion.}
Overall, if the update direction is systematically misaligned with ${W}-W^\star$, then the resulting estimate of ${\Sigma}_{{W}}$ inherits this bias, and since the next update direction depends on this estimate, the bias can reinforce itself, leading to slower training.
By using an isotropic proxy (i.e., ${\Sigma}_{{W}}\propto {I}_m$), we avoid imposing assumptions about the unknown distribution of displacements while still capturing the most robust geometric component of the update through the matrix sign. This helps explain both the computational simplicity and the stability of Newton--Muon.

\end{document}